\newcommand{\Tr}{\mathrm{Tr}\,}
\newcommand{\card}{\mathrm{card}\,}
\newcommand{\ov}{\overline}
\newtheorem{theorem}{Theorem}[section]
\newtheorem{lemma}[theorem]{Lemma}
\newtheorem{proposition}[theorem]{Proposition}
\newtheorem{corollary}[theorem]{Corollary}
\theoremstyle{definition}
\theoremstyle{remark}
\numberwithin{equation}{section}
\title{On random polynomials generated by a symmetric three-term recurrence relation}
\date{\today}
\author{Abey L\'{o}pez-Garc\'{i}a\footnotemark[1] \qquad Vasiliy A. Prokhorov\footnotemark[2]}
\begin{document}

\maketitle

\par{\centering Dedicated to Guillermo L\'{o}pez Lagomasino, in celebration of his 70th birthday \par}

\renewcommand{\thefootnote}{\fnsymbol{footnote}}
\footnotetext[1]{Department of Mathematics, University of Central Florida, 4393 Andromeda Loop North, Orlando, FL 32816, USA. email: abey.lopez-garcia\symbol{'100}ucf.edu.} \footnotetext[2]{Department of Mathematics and Statistics, University of South Alabama, 411 University Boulevard North, 
Mobile, AL 36688, USA. email: prokhoro\symbol{'100}southalabama.edu}

\abstract{We investigate the sequence $(P_{n}(z))_{n=0}^{\infty}$ of random polynomials generated by the three-term recurrence relation $P_{n+1}(z)=z P_{n}(z)-a_{n} P_{n-1}(z)$, $n\geq 1$, with initial conditions $P_{\ell}(z)=z^{\ell}$, $\ell=0, 1$, assuming that $(a_{n})_{n\in\mathbb{Z}}$ is a sequence of positive i.i.d. random variables. $(P_{n}(z))_{n=0}^{\infty}$ is a sequence of orthogonal polynomials on the real line, and $P_{n}$ is the characteristic polynomial of a Jacobi matrix $J_{n}$. We investigate the relation between the common distribution of the recurrence coefficients $a_{n}$ and two other distributions obtained as weak limits of the averaged  empirical and spectral measures of $J_{n}$. Our main result is a description of combinatorial relations between the moments of the aforementioned distributions in terms of certain classes of colored planar trees. Our approach is combinatorial, and the starting point of the analysis is a formula of P. Flajolet for weight polynomials associated with labelled Dyck paths.

\smallskip

\textbf{Keywords:} Random polynomial, orthogonal polynomial, three-term recurrence relation, spectral measure, Dyck path, colored tree.

\smallskip

\textbf{MSC 2010:} Primary 60G55, 42C05; Secondary 05C05.}

\section{Introduction}

Let $\mu$ be a Borel probability measure on $(0,+\infty)$, and let $(a_{n})_{n\in\mathbb{Z}}$ be a sequence of i.i.d. random variables with distribution $\mu$, taking values in $(0,+\infty)$. We assume that all moments of $\mu$ are finite, i.e.,
\begin{equation}\label{eq:finitemoments}
m_{k}:=\int x^{k} d\mu(x)<\infty,\qquad \mbox{for all}\,\,k\in\mathbb{Z}_{\geq 0}.
\end{equation}
We emphasize that these hypotheses will be maintained throughout our work.

In this paper we consider the sequence of random polynomials $(P_{n})_{n=0}^{\infty}$ generated by the three-term recurrence relation
\begin{equation}\label{3termrec}
z P_{n}(z)=P_{n+1}(z)+a_{n} P_{n-1}(z),\qquad n\geq 1,
\end{equation}
with initial conditions
\[
P_{\ell}(z)=z^{\ell},\qquad \ell=0, 1.
\]
Note that $P_{n}(z)$ is a monic polynomial of degree $n$, and for each realization of the random variables $(a_{n})_{n\in\mathbb{Z}}$, the sequence $(P_{n}(z))_{n=0}^{\infty}$ is a sequence of orthogonal polynomials on the real line. The primary goal of this work is to study the relationship between $\mu$ and the asymptotic behavior of two discrete random measures associated with $P_{n}$ that will be defined shortly.

Let $H$ denote the tridiagonal infinite matrix
\begin{equation}\label{def:H}
H=\begin{pmatrix}
0 & 1 \\
a_{1} & 0 & 1 \\
 & a_{2} & 0 & 1 \\
 & & a_{3} & 0 & 1 \\
 & & & \ddots & \ddots & \ddots
\end{pmatrix}
\end{equation}
and let $H_{n}$ denote its principal $n\times n$ truncation
\begin{equation}\label{def:Hn}
H_{n}=\begin{pmatrix}
0 & 1 \\
a_{1} & 0 & \ddots \\
 & \ddots & \ddots & 1 \\
 & & a_{n-1} & 0
\end{pmatrix}.
\end{equation}

From the recurrence \eqref{3termrec} we easily get the relation
\[
P_{n}(z)=\det(z I_{n}-H_{n})
\]
with $I_{n}$ denoting the $n\times n$ identity matrix. Hence the zeros of $P_{n}$ are the eigenvalues of $H_{n}$. As it is well known, these eigenvalues are real and simple and will be indicated as follows:
\[
\lambda_{1}^{(n)}<\lambda_{2}^{(n)}<\ldots<\lambda_{n}^{(n)}.
\]
In this paper, we will use the notation
\begin{equation}\label{def:randomsigman}
\sigma_{n}:=\frac{1}{n}\sum_{j=1}^{n}\delta_{\lambda_{j}^{(n)}}
\end{equation}
for the empirical measure associated with $H_{n}$, where $\delta_{\lambda}$ denotes as usual the Dirac unit measure at $\lambda$. Since $H_{n}$ is diagonalizable, we have the relation
\begin{equation}\label{relsigmatrace}
\int x^{k}\,d\sigma_{n}(x)=\frac{1}{n}\,\Tr(H_{n}^{k}),\qquad k\in\mathbb{Z}_{\geq 0}.
\end{equation}

We will also analyze another random measure associated with $H_{n}$. This is the measure $\tau_{n}$ defined on $[\lambda_{1}^{(n)},\lambda_{n}^{(n)}]$ with moments given by
\begin{equation}\label{reltauH}
\int x^{k}\,d\tau_{n}(x)=\langle H_{n}^{k} e_{1}, e_{1}\rangle=H_{n}^{k}(1,1),\qquad k\in\mathbb{Z}_{\geq 0}.
\end{equation}
Because of the identity
\[
H_{n}^{k}(1,1)=J_{n}^{k}(1,1),\qquad k\in\mathbb{Z}_{\geq 0},
\]
where $J_{n}$ is the finite Jacobi matrix
\[
J_{n}=\begin{pmatrix}
0 & \sqrt{a_{1}} \\
\sqrt{a_{1}} & 0 & \ddots \\
 & \ddots & \ddots & \sqrt{a_{n-1}} \\
 & & \sqrt{a_{n-1}} & 0
\end{pmatrix},
\]
the measure $\tau_{n}$ is known as the spectral measure of $J_{n}$. A basic result in the theory of finite Jacobi matrices, which the reader can easily check, is that $\tau_{n}$ has the expression
\begin{equation}\label{def:randomtaun}
\tau_{n}=\sum_{j=1}^{n} q_{j,n}^{2}\,\delta_{\lambda_{j}^{(n)}},\qquad q_{j,n}=|v_{j}^{(n)}(1)|,
\end{equation}
where $\{v_{1}^{(n)},\ldots,v_{n}^{(n)}\}$ is any orthonormal basis in $\mathbb{R}^{n}$ formed by eigenvectors associated with the eigenvalues $\lambda_{1}^{(n)},\ldots,\lambda_{n}^{(n)}$, respectively. The first (and last) component of each eigenvector $v_{j}^{(n)}$ is non-zero, so $q_{j,n}>0$ for all $1\leq j\leq n$, and we have $\sum_{j=1}^{n}q_{j,n}^2=1$. In the theory of orthogonal polynomials on the real line, the coefficients $q_{j,n}^{2}$ are known as \emph{Christoffel numbers}. For a description of their important role in the study of orthogonal polynomials and Pad\'{e} approximation, see e.g. sections 5 and 6 of Chapter 2 in \cite{NikSor}.

In this paper we analyze the relationship between $\mu$ and two probability distributions obtained as weak limits of the average measures $\mathbb{E} \sigma_{n}$ and $\mathbb{E} \tau_{n}$. Our main result is a combinatorial description of the mutual relations between the moments of these three distributions in terms of certain classes of planar trees.  

We wish to mention some closely related works on random Jacobi matrices which partly motivated our work. Popescu \cite{Pop} studied the asymptotic distribution of eigenvalues of general Jacobi matrices assuming certain growth conditions on the off-diagonal entries, and a boundedness condition on the diagonal entries, see \cite[Theorem 1]{Pop}. Even though the situation he analyzes differs from ours, in view of \cite[Remark 2]{Pop}, it seems that after an appropriate normalization he obtains moment sequences that seem to be related as our sequences $(m_{k})_{k=0}^{\infty}$ and $(\omega_{n})_{n=0}^{\infty}$ defined in \eqref{eq:finitemoments} and \eqref{def:omegam}. If this is the case, then our Theorem~\ref{theo:invrelmomega} provides the combinatorial interpretation that he posed as an open question in his remark. Duy \cite{Khan} and Duy-Shirai \cite{KhanhShirai} studied the asymptotic behavior of spectral measures of Jacobi matrices that are obtained as tridiagonal models of Gaussian, Wishart and MANOVA beta ensembles. 

This paper is structured as follows. In Section~\ref{sec:paths} we introduce certain classes of lattice paths and associated weight polynomials. We first express the moments \eqref{relsigmatrace} and \eqref{reltauH} in terms of these weight polynomials. Then we turn our attention to the analysis of weight polynomials associated with Dyck paths and generalized Dyck paths, and analyze relations between these polynomials in terms of formal Laurent series. In Section~\ref{sec:formseries}  we introduce the two sequences $(\alpha_{n})_{n=0}^{\infty}$ and $(\omega_{n})_{n=0}^{\infty}$ that form the main object of study of our work. In Proposition~\ref{prop:analyticrel} and Theorem~\ref{theo:analyticrel} we describe some analytic relations between these sequences and the sequence $(m_{k})_{k=0}^{\infty}$. In Section~\ref{sec:trees} we introduce four classes of rooted planar trees, which are used to describe combinatorial relations between the sequences $(m_{k})_{k=0}^{\infty}$, $(\alpha_{n})_{n=0}^{\infty}$, and $(\omega_{n})_{n=0}^{\infty}$, see Theorems~\ref{theo:combmalpha} through~\ref{theo:combalphaomega}. These combinatorial relations are more direct than those described in Section~\ref{sec:formseries}, as they do not involve the intermediate quantities defined in \eqref{defin:alphank}. Finally, in Section~\ref{sec:asymp} we show that under an additional condition, the averages $\mathbb{E}\tau_{n}$ and $\mathbb{E}\sigma_{n}$ converge to symmetric probability measures whose even moments are given by the sequences $(\alpha_{n})_{n=0}^{\infty}$ and $(\omega_{n})_{n=0}^{\infty}$, respectively.

\section{Lattice paths and associated weight polynomials}\label{sec:paths}

In this section we introduce certain lattice paths and associated weight polynomials, which constitute the first ingredients in our analysis. This approach is classical for the study of orthogonal polynomials, random polynomials and random matrices. In the area of orthogonal polynomials it goes back at least to the work of Viennot \cite{Vien}, and in random matrices applications of this approach abound.  

\subsection{Path representation of quantities of interest}\label{subs:lattice}

We consider the oriented graph $\mathcal{G}=(\mathcal{V},\mathcal{E})$ with set of vertices $\mathcal{V}:=\mathbb{Z}_{\geq 0}\times \mathbb{Z}$ and set of edges
\[
\mathcal{E}:=\mathcal{E}_{u}\cup\mathcal{E}_{d},
\]
where
\begin{align*}
\mathcal{E}_{u} & :=\{(n,m)\rightarrow (n+1,m+1) : n\in\mathbb{Z}_{\geq 0}, m\in\mathbb{Z}\}\qquad (\mbox{the \emph{up steps}}),\\
\mathcal{E}_{d} & :=\{(n,m)\rightarrow (n+1,m-1) : n\in\mathbb{Z}_{\geq 0}, m\in\mathbb{Z}\}\qquad (\mbox{the \emph{down steps}}).
\end{align*}
Here, $v\rightarrow v'$ indicates the edge with initial vertex $v$ and ending vertex $v'$. By a \emph{path} on $\mathcal{G}$ we mean a finite sequence of edges
\begin{equation}\label{eq:path}
\gamma=e_{1}e_{2}\cdots e_{k},
\end{equation}
where for each $1\leq j\leq k-1,$ the ending vertex of $e_{j}$ conicides with the initial vertex of $e_{j+1}$. We say that the path in \eqref{eq:path} has \emph{length} $k$. If $(n,m)\in\mathcal{V}$ is a vertex in the path $\gamma$, we say that $\gamma$ has \emph{height} $m$ at \emph{time} $n$. We define $\max(\gamma)$ to be the maximum of all the heights of $\gamma$, and $\min(\gamma)$ to be the minimum of all the heights of $\gamma$. Also, if $q\in\mathbb{Z}$ and $\gamma$ is a path, we denote by $\gamma+q$ the path obtained by shifting $\gamma$ vertically $|q|$ units upwards or downwards according to whether $q$ is positive or negative.

Let $(a_{n})_{n\in\mathbb{Z}}$ be the sequence of random variables that we considered before. To each edge we associate a weight as follows:
\begin{equation}\label{weightedges}
w((n,m)\rightarrow (n+1,m+1))=1,\qquad w((n,m)\rightarrow (n+1,m-1))=a_{m-1}.
\end{equation}
Hence, all upstep edges have weight 1 and a downstep edge has a weight that depends on the ordinate of its ending vertex. If $\gamma$ is now a path on $\mathcal{G}$, we define its weight by
\begin{equation}\label{weightpath}
w(\gamma)=\prod_{e\subset\gamma}w(e),
\end{equation}
the product being taken over all edges of $\gamma$.

With these notions introduced, we now turn to the analysis of the traces $\Tr(H_{n}^{k})$, where $H_{n}$ is the $n\times n$ matrix in \eqref{def:Hn}. We have
\begin{equation}\label{eq:traceHn}
\Tr(H_{n}^{k})=\sum_{1\leq i_{1},\ldots,i_{k}\leq n}h_{i_{1},i_{2}} h_{i_2,i_3}\cdots h_{i_{k-1},i_{k}} h_{i_{k},i_{1}},
\end{equation}
where $h_{i,j}$, $i,j\geq 1$, is the $(i,j)$-entry of $H$. For short, we will write
\begin{equation}\label{eq:hi}
h_{\mathbf{i}}:=h_{i_{1},i_{2}} h_{i_2,i_3}\cdots h_{i_{k-1},i_{k}} h_{i_{k},i_{1}},\qquad\mathbf{i}:=(i_{1},i_{2},\ldots,i_{k},i_{1}).
\end{equation}
From the form of $H_{n}$ it follows that the only nonzero terms \eqref{eq:hi} that appear in the trace \eqref{eq:traceHn} are those associated with vectors $\mathbf{i}$ such that for each $j=1,\ldots,k,$ we have $|i_{j}-i_{j+1}|=1$ (where $i_{k+1}=i_{1}$).

With this in mind, we define $\mathcal{A}(n,k,i)$ to be the set of all vectors $\mathbf{i}=(i_{1},\ldots,i_{k+1})\in\mathbb{N}^{k+1}$ with the following properties:
\begin{itemize}
\item[1)] Each component satisfies $1\leq i_{j}\leq n$, and $i_{k+1}=i_{1}=i$.
\item[2)] For each $j=1,\ldots,k,$ we have $|i_{j}-i_{j+1}|=1$.
\end{itemize}
To each vector $\mathbf{i}\in\mathcal{A}(n,k,i)$ we associate a path $\gamma_{\mathbf{i}}=e_{1}e_{2}\cdots e_{k}$ of length $k$ on $\mathcal{G}$ having edges
\begin{equation}\label{edge:ej}
e_{j}: (j-1,i_{j})\rightarrow (j,i_{j+1}),\qquad 1\leq j\leq k.
\end{equation}
Thus, $\gamma_{\mathbf{i}}$ is a path with starting point $(0,i_{1})$ and ending point $(k,i_{1})$. It is clear that the map $\mathbf{i}\mapsto\gamma_{\mathbf{i}}$ is one-to-one, and we will denote by $\mathcal{P}(n,k,i)$ the image under this map of $\mathcal{A}(n,k,i)$. Thus, $\mathcal{P}(n,k,i)$ is the collection of all paths $\gamma$ on $\mathcal{G}$ satisfying $1\leq \min(\gamma)\leq \max(\gamma)\leq n$ and having initial point $(0,i)$ and ending point $(k,i)$.

In virtue of \eqref{eq:traceHn} and \eqref{eq:hi} we have
\begin{equation*}
\Tr(H_{n}^{k})=\sum_{i=1}^{n}\sum_{\mathbf{i}\in\mathcal{A}(n,k,i)}h_{\mathbf{i}}=\sum_{i=1}^{n}\sum_{\mathbf{i}\in\mathcal{A}(n,k,i)}w(\gamma_{\mathbf{i}})
\end{equation*}
where the second equality follows from the fact that for each $1\leq j\leq k$, the entry $h_{i_{j},i_{j+1}}$ is precisely the weight of the edge $e_{j}$ in \eqref{edge:ej}. In virtue of the bijection between $\mathcal{A}(n,k,i)$ and $\mathcal{P}(n,k,i)$, we can also write
\begin{equation}\label{eq:trace:rep}
\Tr(H_{n}^{k})=\sum_{i=1}^{n}\sum_{\gamma\in\mathcal{P}(n,k,i)}w(\gamma).
\end{equation}
Similarly, the $(1,1)$-entry of $H_{n}^{k}$ can be expressed in the form
\begin{equation}\label{eq:entry11}
H_{n}^{k}(1,1)=\sum_{\gamma\in\mathcal{P}(n,k,1)}w(\gamma).
\end{equation}

If $\gamma\in\mathcal{P}(n,k,i)$, since the path starts and ends at the same height, necessarily
\begin{equation}\label{relupdown}
\card\{\mbox{up steps in}\,\,\gamma\}=\card \{\mbox{down steps in}\,\,\gamma\},
\end{equation}
implying that $k$ is even ($k$ is the total number of edges in $\gamma$). In particular, if $k$ is odd, then $\mathcal{P}(n,k,i)=\emptyset$ for all $1\leq i\leq n$, and therefore
\begin{equation}\label{nulltrace11entry}
k\,\,\mbox{odd} \implies \Tr(H_{n}^{k})=H_{n}^{k}(1,1)=0.
\end{equation}

\subsection{Generalized Dyck paths and associated weight polynomials}

In the previous subsection we observed that the trace of $H_{n}^{k}$ is naturally associated with the collections of paths $\mathcal{P}(n,k,i)$. However, for the asymptotic analysis of these traces, it is convenient to consider a more uniform and less restricted collection of paths that we define next.

For any $n\in\mathbb{Z}_{\geq 0}$, let $\mathcal{P}_{n}$ denote the collection of all paths on $\mathcal{G}$ of length $2n$ with starting point $(0,0)$ and ending point $(2n,0)$. We will refer to $\mathcal{P}_{n}$ as the collection of \emph{generalized Dyck paths} of length $2n$ (also known as \emph{flawed Dyck paths}). Note that there are $\binom{2n}{n}$ different paths in $\mathcal{P}_{n}$.

There are two natural involutions $I:\mathcal{P}_{n}\longrightarrow\mathcal{P}_{n}$ that can be defined on $\mathcal{P}_{n}$, i.e., maps with the property that $I^{2}$ is the identity. One is the map $\gamma\mapsto\overline{\gamma}$ defined by taking the symmetric image with respect to the real axis, and the other one is the map $\gamma\mapsto\gamma^{*}$ defined by taking the symmetric image with respect to the vertical line $x=n$.

An important subset of $\mathcal{P}_{n}$ is the collection of \emph{Dyck paths} of length $2n$, which we will denote by $\mathcal{D}_{n}$. This collection consists of those paths $\gamma\in\mathcal{P}_{n}$ such that $\min(\gamma)=0$. It is well known that the cardinality of $\mathcal{D}_{n}$ is
\[
C_{n}=\frac{1}{n+1}\binom{2n}{n},
\]
the $n$th Catalan number. Observe also that $\mathcal{D}_{n}=\mathcal{D}_{n}^{*}$, i.e., $\mathcal{D}_{n}$ is invariant under the involution $\gamma\mapsto\gamma^{*}$. See Figures \ref{Dyckpath} and \ref{genDyckpath} for examples of a Dyck path and a generalized Dyck path.

To make our formulas below more symmetric, we will adopt a new notation for the random variables $a_{n}$ with $n<0$, namely we define
\begin{equation}\label{def:bn}
b_{n}:=a_{-n-1},\qquad n\in \mathbb{Z}_{\geq 0}.
\end{equation}

\begin{figure}
\begin{center}
\begin{tikzpicture}[scale=0.7]
\draw[line width=1.5pt]  (-3.03,0) -- (11.5,0);
\draw[line width=1.5pt]  (-3,0) -- (-3,3.5);
\draw[line width=0.7pt] (-3,0) -- (-2,1) -- (-1,0) -- (0,1) -- (1,2) -- (2,1) -- (3,2) -- (4,3) -- (5,2) -- (6,1) -- (7,0) -- (8,1) -- (9,2) -- (10,1) -- (11,0);
\draw [line width=0.5] (-2,0) -- (-2,-0.12);
\draw [line width=0.5] (-1,0) -- (-1,-0.12);
\draw [line width=0.5] (0,0) -- (0,-0.12);
\draw [line width=0.5] (1,0) -- (1,-0.12);
\draw [line width=0.5] (2,0) -- (2,-0.12);
\draw [line width=0.5] (3,0) -- (3,-0.12);
\draw [line width=0.5] (4,0) -- (4,-0.12);
\draw [line width=0.5] (5,0) -- (5,-0.12);
\draw [line width=0.5] (6,0) -- (6,-0.12);
\draw [line width=0.5] (7,0) -- (7,-0.12);
\draw [line width=0.5] (8,0) -- (8,-0.12);
\draw [line width=0.5] (9,0) -- (9,-0.12);
\draw [line width=0.5] (10,0) -- (10,-0.12);
\draw [line width=0.5] (11,0) -- (11,-0.12);
\draw [line width=0.5] (-3,1) -- (-3.12,1);
\draw [line width=0.5] (-3,2) -- (-3.12,2);
\draw [line width=0.5] (-3,3) -- (-3.12,3);
\draw [dotted] (-3,1) -- (11.5,1);
\draw [dotted] (-3,2) -- (11.5,2);
\draw [dotted] (-3,3) -- (11.5,3);
\draw (-2,-0.1) node[below, scale=0.8]{$1$};
\draw (-1,-0.1) node[below, scale=0.8]{$2$};
\draw (0,-0.1) node[below, scale=0.8]{$3$};
\draw (1,-0.1) node[below, scale=0.8]{$4$};
\draw (2,-0.1) node[below, scale=0.8]{$5$};
\draw (3,-0.1) node[below, scale=0.8]{$6$};
\draw (4,-0.1) node[below, scale=0.8]{$7$};
\draw (5,-0.1) node[below, scale=0.8]{$8$};
\draw (6,-0.1) node[below, scale=0.8]{$9$};
\draw (7,-0.1) node[below, scale=0.8]{$10$};
\draw (8,-0.1) node[below, scale=0.8]{$11$};
\draw (9,-0.1) node[below, scale=0.8]{$12$};
\draw (10,-0.1) node[below, scale=0.8]{$13$};
\draw (11,-0.1) node[below, scale=0.8]{$14$};
\draw (-3.1,1) node[left, scale=0.8]{$1$};
\draw (-3.1,2) node[left, scale=0.8]{$2$};
\draw (-3.1,3) node[left, scale=0.8]{$3$};
\end{tikzpicture}
\end{center}
\caption{Example of a Dyck path of length $14$ with weight $a_{0}^{3} a_{1}^3 a_{2}$.}
\label{Dyckpath}
\end{figure}
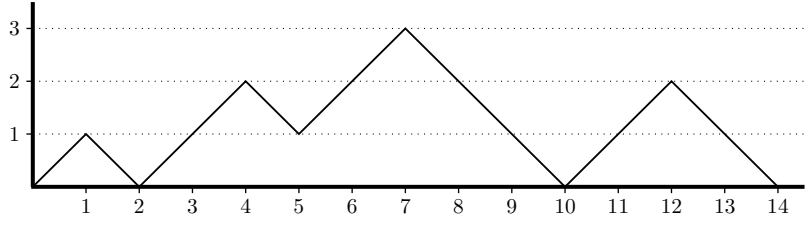

\begin{figure}
\begin{center}
\begin{tikzpicture}[scale=0.7]
\draw[line width=1.5pt]  (-3,0) -- (11.5,0);
\draw[line width=1.5pt]  (-3,-2.5) -- (-3,2.5);
\draw[line width=0.7pt] (-3,0) -- (-2,-1) -- (-1,0) -- (0,1) -- (1,2) -- (2,1) -- (3,0) -- (4,-1) -- (5,-2) -- (6,-1) -- (7,0) -- (8,-1) -- (9,0) -- (10,1) -- (11,0);
\draw [line width=0.5] (-2,0) -- (-2,-0.12);
\draw [line width=0.5] (-1,0) -- (-1,-0.12);
\draw [line width=0.5] (0,0) -- (0,-0.12);
\draw [line width=0.5] (1,0) -- (1,-0.12);
\draw [line width=0.5] (2,0) -- (2,-0.12);
\draw [line width=0.5] (3,0) -- (3,-0.12);
\draw [line width=0.5] (4,0) -- (4,-0.12);
\draw [line width=0.5] (5,0) -- (5,-0.12);
\draw [line width=0.5] (6,0) -- (6,-0.12);
\draw [line width=0.5] (7,0) -- (7,-0.12);
\draw [line width=0.5] (8,0) -- (8,-0.12);
\draw [line width=0.5] (9,0) -- (9,-0.12);
\draw [line width=0.5] (10,0) -- (10,-0.12);
\draw [line width=0.5] (11,0) -- (11,-0.12);
\draw [line width=0.5] (-3,1) -- (-3.12,1);
\draw [line width=0.5] (-3,2) -- (-3.12,2);
\draw [dotted] (-3,1) -- (11.5,1);
\draw [dotted] (-3,2) -- (11.5,2);
\draw [dotted] (-3,-1) -- (11.5,-1);
\draw [dotted] (-3,-2) -- (11.5,-2);
\draw (-2,-0.1) node[below, scale=0.8]{$1$};
\draw (-1,-0.1) node[below, scale=0.8]{$2$};
\draw (0,-0.1) node[below, scale=0.8]{$3$};
\draw (1,-0.1) node[below, scale=0.8]{$4$};
\draw (2,-0.1) node[below, scale=0.8]{$5$};
\draw (3,-0.1) node[below, scale=0.8]{$6$};
\draw (4,-0.1) node[below, scale=0.8]{$7$};
\draw (5,-0.1) node[below, scale=0.8]{$8$};
\draw (6,-0.1) node[below, scale=0.8]{$9$};
\draw (7,-0.1) node[below, scale=0.8]{$10$};
\draw (8,-0.1) node[below, scale=0.8]{$11$};
\draw (9,-0.1) node[below, scale=0.8]{$12$};
\draw (10,-0.1) node[below, scale=0.8]{$13$};
\draw (11,-0.1) node[below, scale=0.8]{$14$};
\draw (-3.1,1) node[left, scale=0.8]{$1$};
\draw (-3.1,2) node[left, scale=0.8]{$2$};
\draw (-3.1,0) node[left, scale=0.8]{$0$};
\draw (-3.1,-1) node[left, scale=0.8]{$-1$};
\draw (-3.1,-2) node[left, scale=0.8]{$-2$};
\end{tikzpicture}
\end{center}
\caption{Example of a generalized Dyck path of length $14$ with weight $a_{0}^{2} a_{1} b_{0}^3 b_{1}$.}
\label{genDyckpath}
\end{figure}
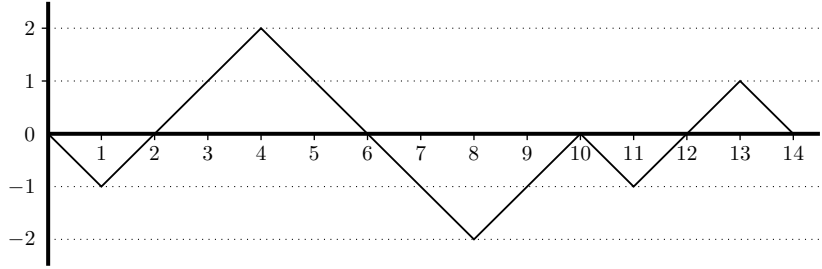

We now introduce three sequences of polynomials $(W_{n})_{n=0}^{\infty}$, $(A_{n})_{n=0}^{\infty}$ and $(B_{n})_{n=0}^{\infty}$, defined as the weight polynomials associated with the collections of paths $\mathcal{P}_{n}$, $\mathcal{D}_{n}$ and $\overline{\mathcal{D}}_{n}$, respectively, where $\overline{\mathcal{D}}_{n}$ is the image of $\mathcal{D}_{n}$ under the map $\gamma\mapsto\overline{\gamma}$. Precisely, for each $n\in\mathbb{Z}_{\geq 0}$, we let
\begin{align}
W_{n} & =\sum_{\gamma\in\mathcal{P}_{n}} w(\gamma),\label{def:Wm}\\
A_{n} & =\sum_{\gamma\in\mathcal{D}_{n}} w(\gamma),\label{def:Am}\\
B_{n} & =\sum_{\gamma\in\overline{\mathcal{D}}_{n}} w(\gamma),\label{def:Bm}
\end{align}
where by definition $W_{0}=A_{0}=B_{0}=1$. In general, if $\mathcal{S}\subset \mathcal{P}_{n}$, then we call the expression
\[
\sum_{\gamma\in\mathcal{S}}w(\gamma)
\]
the weight polynomial associated with $\mathcal{S}$.

Observe that $W_{n}$ is a polynomial in the $2n$ variables $a_{0},\ldots,a_{n-1}$ and $b_{0},\ldots, b_{n-1}$, while $A_{n}$ and $B_{n}$ are polynomials in the $n$ variables $a_{0},\ldots,a_{n-1}$ and $b_{0},\ldots,b_{n-1}$, respectively. To emphasize this dependence, sometimes we write
\begin{align*}
W_{n} & =W_{n}(a_{0},\ldots,a_{n-1};b_{0},\ldots,b_{n-1}),\\
A_{n} & =A_{n}(a_{0},\ldots,a_{n-1}), \\
B_{n} & =B_{n}(b_{0},\ldots,b_{n-1}).
\end{align*}
The explicit expressions of some of these polynomials are given below:
\begin{align*}
A_{0} & =1 \\
A_{1} & = a_{0}\\
A_{2} & = a_{0} (a_{0}+a_{1})\\
A_{3} & = a_{0} (a_{0}^{2}+2 a_{0} a_{1}+a_{1}^{2}+a_{1} a_{2})\\
W_{0} & = 1\\
W_{1} & = a_{0}+b_{0} \\
W_{2} & = a_{0} (a_{0}+a_{1})+2 a_{0} b_{0}+b_{0} (b_{0}+b_{1}) \\
W_{3} & =a_{0} (a_{0}^{2}+2 a_{0} a_{1}+a_{1}^{2}+a_{1} a_{2})+a_{0} b_{0}(3 a_{0}+3 b_{0}+2 a_{1}+2 b_{1})+b_{0} (b_{0}^{2}+2 b_{0} b_{1}+b_{1}^{2}+b_{1} b_{2}).
\end{align*}

In the following proposition we gather some elementary properties of these weight polynomials:

\begin{proposition}
The following properties hold for every $n\in\mathbb{Z}_{\geq 0}$:
\begin{itemize}
\item[1)] The polynomials $W_{n}$, $A_{n}$, $B_{n}$ are homogeneous polynomials of degree $n$.
\item[2)] We have the symmetry property
\[
W_{n}(a_{0},\ldots,a_{n-1};b_{0},\ldots,b_{n-1})=W_{n}(b_{0},\ldots,b_{n-1};a_{0},\ldots,a_{n-1}).
\]
\item[3)] We have
\[
B_{n}=A_{n}(b_{0},\ldots,b_{n-1}).
\]
\end{itemize}
\end{proposition}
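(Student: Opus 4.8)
The plan is to verify each of the three properties directly from the definitions of the weight polynomials as sums over path classes, using the weight assignments in \eqref{weightedges} and the path transformations (involutions) already introduced.

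For property 1), I would argue that each path $\gamma$ in $\mathcal{P}_{n}$ (and hence in the subclasses $\mathcal{D}_{n}$, $\overline{\mathcal{D}}_{n}$) has length $2n$, so it consists of $n$ up steps and $n$ down steps by \eqref{relupdown}. Up steps carry weight $1$, and every down step carries a weight of the form $a_{j}$ or $b_{j}$ (recall $a_{m-1}$ with $m<1$ is renamed $b_{-m}$ via \eqref{def:bn}), so the monomial $w(\gamma)=\prod_{e\subset\gamma} w(e)$ is a product of exactly $n$ such variables, i.e., a monomial of degree $n$ in the variables $a_{0},\ldots,a_{n-1},b_{0},\ldots,b_{n-1}$. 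Summing monomials all of degree $n$ yields a homogeneous polynomial of degree $n$; this gives 1) simultaneously for $W_{n}$, $A_{n}$, $B_{n}$. One should also note why only the variables with indices in $\{0,\ldots,n-1\}$ appear: a path starting and ending at height $0$ with $n$ up and $n$ down steps stays within heights $-n,\ldots,n$, and a down step ending at ordinate $m$ has weight $a_{m-1}$, so the relevant indices of $a$ are between $-n$ and $n-1$, which under \eqref{def:bn} translates to $a_{0},\ldots,a_{n-1}$ and $b_{0},\ldots,b_{n-1}$.

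For property 2), the key observation is that the involution $\gamma\mapsto\overline{\gamma}$ (reflection in the real axis) is a bijection of $\mathcal{P}_{n}$ onto itself, and it interchanges the roles of the $a$ and $b$ variables. Concretely, if a down step of $\gamma$ ends at height $m$ (weight $a_{m-1}$), then the corresponding up step of $\overline{\gamma}$ becomes, after reflection, a down step ending at height $-m+1$... more carefully: reflection turns up steps into down steps and vice versa, and one checks from \eqref{weightedges} and \eqref{def:bn} that $w(\overline{\gamma})$ is obtained from $w(\gamma)$ by swapping $a_{j}\leftrightarrow b_{j}$ for every $j$. Since reflection is a bijection on $\mathcal{P}_{n}$, summing over all $\gamma$ gives $W_{n}(a_{0},\ldots;b_{0},\ldots)=W_{n}(b_{0},\ldots;a_{0},\ldots)$. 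For property 3), the same reflection $\gamma\mapsto\overline{\gamma}$ is by definition the bijection between $\mathcal{D}_{n}$ and $\overline{\mathcal{D}}_{n}$, so $B_{n}=\sum_{\gamma\in\overline{\mathcal{D}}_{n}}w(\gamma)=\sum_{\delta\in\mathcal{D}_{n}}w(\overline{\delta})$, and by the weight-swapping computation just described this equals $A_{n}$ evaluated with each $a_{j}$ replaced by $b_{j}$, which is precisely $A_{n}(b_{0},\ldots,b_{n-1})$.

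The main obstacle, such as it is, is bookkeeping: one must set up the weight-reflection correspondence carefully so that the index shift built into \eqref{weightedges} (the weight $a_{m-1}$ attached to a down step ending at height $m$) and the renaming \eqref{def:bn} combine correctly to give the clean swap $a_{j}\leftrightarrow b_{j}$. Writing an up step $(n,m)\to(n+1,m+1)$ of $\gamma$, its reflection is the down step $(n,-m)\to(n+1,-m-1)$ of $\overline{\gamma}$, which has weight $a_{-m-1}=b_{m}$; and a down step $(n,m)\to(n+1,m-1)$ of $\gamma$ with weight $a_{m-1}$ reflects to an up step (weight $1$). Tracking which down steps of $\overline{\gamma}$ correspond to which up steps of $\gamma$ and reading off the heights is the only place requiring care, but it is a direct check. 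Once this correspondence is in hand, properties 2) and 3) are immediate, and property 1) requires only the up/down step count. I would present 1) first, then the reflection lemma, then deduce 2) and 3) from it.
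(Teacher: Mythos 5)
Your proposal is correct and follows essentially the same route as the paper: homogeneity from the fact that each path in $\mathcal{P}_{n}$ has exactly $n$ down steps, and properties 2) and 3) from the reflection involution $\gamma\mapsto\overline{\gamma}$ together with the observation that it swaps $a_{j}\leftrightarrow b_{j}$ in the weights. The one detail you defer to ``a direct check''---that the multiplicity of $a_{m}$ in $w(\gamma)$ (down steps from $m+1$ to $m$) matches the multiplicity of $b_{m}$ in $w(\overline{\gamma})$ (up steps of $\gamma$ from $m$ to $m+1$)---holds because a closed path crosses each half-integer level equally often in both directions, and the paper itself dispatches this with ``it is easy to see,'' so your level of detail already exceeds the original.
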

\begin{proof}
The weight of any path $\gamma\in\mathcal{P}_{n}$ is the product of $n$ variables in $\{a_{i}, b_{i}\}_{i=0}^{n-1}$ since the path contains exactly $n$ down steps. This proves the first statement.

It is easy to see that for any $\gamma\in\mathcal{P}_{n}$, $w(\overline{\gamma})$ is obtained from $w(\gamma)$ by replacing any variable $a_{i}$ by $b_{i}$, and any variable $b_{i}$ by $a_{i}$. Therefore
\[
W_{n}(a_{0},\ldots,a_{n-1};b_{0},\ldots,b_{n-1})
=\sum_{\gamma\in\mathcal{P}_{n}} w(\gamma)=\sum_{\gamma\in\mathcal{P}_{n}} w(\overline{\gamma})=W_{n}(b_{0}\ldots,b_{n-1};a_{0},\ldots,a_{n-1}).
\]
The third statement is obvious.
\end{proof}

We also need to define the following polynomials obtained from \eqref{def:Am}--\eqref{def:Bm} by shifting the variables. For each $k\geq 0$, let
\begin{align}
A_{n}^{(k)} & :=A_{n}(a_{k},\ldots,a_{k+n-1})\qquad n\geq 0, \label{def:Amshifted} \\
B_{n}^{(k)} & :=B_{n}(b_{k},\ldots,b_{k+n-1})\qquad n\geq 0, \label{def:Bmshifted}
\end{align}
in particular $A_{n}=A_{n}^{(0)}$, $B_{n}=B_{n}^{(0)}$ for all $n$.

In this paper we will frequently use formal Laurent series in the space $\mathbb{C}((z^{-1}))$, equipped with the usual addition and multiplication of series, which makes this space a field. We define the following series:
\begin{align}
W(z) & :=\sum_{n=0}^{\infty} \frac{W_{n}}{z^{2n+1}},\label{def:seriesW}\\
A^{(k)}(z) & :=\sum_{n=0}^{\infty}\frac{A_{n}^{(k)}}{z^{2n+1}},\qquad k\geq 0,\label{def:seriesAk}\\
B^{(k)}(z) & :=\sum_{n=0}^{\infty}\frac{B_{n}^{(k)}}{z^{2n+1}},\qquad k\geq 0. \label{def:seriesBk}
\end{align}
In the case $k=0$, we use the notation
\begin{align}
A(z) & :=\sum_{n=0}^{\infty}\frac{A_{n}}{z^{2n+1}},\label{def:seriesA}\\
B(z) & :=\sum_{n=0}^{\infty}\frac{B_{n}}{z^{2n+1}}.\label{def:seriesB}
\end{align}

The decoupling formula that we present in \eqref{eq:relWAB} is well-known in the theory of deterministic bi-infinite Jacobi matrices and is due to Masson-Repka \cite{MasRep}, who gave an analytic proof of it in their paper. We give here a combinatorial proof of this formula based on lattice paths.

\begin{proposition}
The following relation holds:
\begin{equation}\label{eq:relWAB}
W(z)=\frac{1}{z-a_{0} A^{(1)}(z)-b_{0} B^{(1)}(z)}.
\end{equation}
We also have that for each $k\geq 0$,
\begin{align}
A^{(k)}(z) & =\frac{1}{z-a_{k} A^{(k+1)}(z)}, \label{eq:relAA} \\
B^{(k)}(z) & =\frac{1}{z-b_{k} B^{(k+1)}(z)}. \label{eq:relBB}
\end{align}
\end{proposition}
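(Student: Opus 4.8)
The plan is to prove the three identities by the standard first-return (or first-passage) decomposition of lattice paths, which in the weighted setting converts a bijective recursion for path families into an algebraic identity for their generating Laurent series. I will treat \eqref{eq:relAA} and \eqref{eq:relBB} first, since they are the simpler base case, and then bootstrap \eqref{eq:relWAB} from them.

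First I would establish \eqref{eq:relAA}. Recall $A_n^{(k)}$ is the weight polynomial of Dyck paths of length $2n$ with steps starting at height $0$, never going below $0$, where a down step into height $m$ carries weight $a_{m-1}$ — but after the shift in \eqref{def:Amshifted} the relevant variables are $a_k, a_{k+1},\dots$. A nonempty Dyck path of this type must begin with an up step $(0,0)\to(1,1)$ and, letting $2r$ be the time of its first return to height $0$, decomposes uniquely as: an up step; a Dyck path of length $2(r-1)$ living at heights $\ge 1$ (equivalently, after shifting down by one, an ordinary Dyck path whose down steps use variables $a_{k+1}, a_{k+2},\dots$, i.e. a path counted by $A_{r-1}^{(k+1)}$); a down step $(2r-1,1)\to(2r,0)$ of weight $a_k$; and a Dyck path of length $2(n-r)$ of the original type, counted by $A_{n-r}^{(k)}$. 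Summing weights over this decomposition gives, for $n\ge 1$,
\[
A_n^{(k)}=\sum_{r=1}^{n} a_k\, A_{r-1}^{(k+1)}\, A_{n-r}^{(k)},
\]
which at the level of the series \eqref{def:seriesAk} reads $z\bigl(A^{(k)}(z)-z^{-1}\bigr)=a_k\,A^{(k+1)}(z)\,A^{(k)}(z)$, i.e. $z A^{(k)}(z)=1+a_k A^{(k+1)}(z) A^{(k)}(z)$; solving for $A^{(k)}(z)$ yields \eqref{eq:relAA}. The identity \eqref{eq:relBB} for $B^{(k)}$ is identical after replacing $a$ by $b$ and reflecting in the real axis (using statement 3) of the preceding proposition); alternatively it is literally \eqref{eq:relAA} with $a_j\mapsto b_j$.

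For \eqref{eq:relWAB} I would use the analogous first-return decomposition for a \emph{generalized} Dyck path $\gamma\in\mathcal P_n$, $n\ge 1$, starting and ending at height $0$ but allowed to visit negative heights. The first step of $\gamma$ is either an up step or a down step. If it is an up step, let $2r$ be the first return time to height $0$; then $\gamma$ splits as: an up step; a Dyck path of length $2(r-1)$ at heights $\ge 1$, which after shifting down by one is a Dyck path with down-step weights $a_1,a_2,\dots$, hence counted by $A_{r-1}^{(1)}$; a down step into height $0$ of weight $a_0$ (note the ending vertex has ordinate $0$, so by \eqref{weightedges} the weight is $a_{-1}=a_0$ — here the bookkeeping with $b_n:=a_{-n-1}$ must be handled carefully); and a generalized Dyck path of length $2(n-r)$, counted by $W_{n-r}$. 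If instead the first step is a down step, the mirror-image decomposition through the \emph{last} return to height $0$ (or equivalently the first-passage argument applied to $\overline\gamma$) contributes $b_0\, B_{r-1}^{(1)}\, W_{n-r}$. Summing,
\[
W_n=\sum_{r=1}^{n}\bigl(a_0 A_{r-1}^{(1)}+b_0 B_{r-1}^{(1)}\bigr)W_{n-r},\qquad n\ge 1,
\]
and translating to series via \eqref{def:seriesW}, \eqref{def:seriesAk}, \eqref{def:seriesBk} gives $z\bigl(W(z)-z^{-1}\bigr)=\bigl(a_0 A^{(1)}(z)+b_0 B^{(1)}(z)\bigr)W(z)$, hence $W(z)\bigl(z-a_0 A^{(1)}(z)-b_0 B^{(1)}(z)\bigr)=1$, which is \eqref{eq:relWAB}.

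The main obstacle I anticipate is not the algebra — the series manipulations are routine once the convolution recursions are in hand — but making the first-return/first-passage decomposition genuinely \emph{bijective} and \emph{weight-preserving}, in two respects: (i) confirming that the ``hump'' strictly above height $0$ really is, after a downward shift, an object enumerated by $A^{(1)}$ rather than $A^{(0)}$ — this is exactly where the index shift \eqref{def:Amshifted} is designed to work, because a down step landing at height $m\ge 1$ in the shifted picture landed at height $m+1\ge 2$ in the original, carrying weight $a_{m}$ — and (ii) getting the weight of the connecting down/up step correct under the convention \eqref{weightedges} together with the relabeling \eqref{def:bn}, so that the factor is precisely $a_0$ (resp. $b_0$) and not $a_1$ or $b_{-1}$. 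Once those two points are checked on a small example (e.g. reproducing $W_2 = a_0(a_0+a_1)+2a_0b_0+b_0(b_0+b_1)$ from the recursion), the rest follows mechanically.
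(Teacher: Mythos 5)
Your proposal is correct and follows essentially the same route as the paper: a first-return decomposition of $\mathcal{D}_n$ (resp.\ $\mathcal{P}_n$) yielding the convolution recursions $A_n^{(k)}=\sum_{r=1}^n a_k A_{r-1}^{(k+1)}A_{n-r}^{(k)}$ and $W_n=\sum_{r=1}^n(a_0A_{r-1}^{(1)}+b_0B_{r-1}^{(1)})W_{n-r}$, which translate into the stated series identities. One small slip: in your parenthetical about the connecting down step for $W$, the edge ending at ordinate $0$ has weight $a_0$ directly by \eqref{weightedges} (take $m=1$); it is not ``$a_{-1}=a_0$'' --- indeed $a_{-1}=b_0$ by \eqref{def:bn} --- but this does not affect the argument, whose conclusion ($a_0$) is right.
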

\begin{proof}
First, consider the set $\widehat{\mathcal{P}}_{n}\subset\mathcal{P}_{n}$ consisting of all paths $\gamma$ that do not touch the real line except at times $0$ and $2n$. The portion of a path $\gamma\in\widehat{\mathcal{P}}_{n}$ that corresponds to the time interval $[1,2n-1]$ is either above the line $y=1$ or below the line $y=-1$, and therefore it can be identified with the translation of a path in $\mathcal{D}_{n-1}$ or in $\overline{\mathcal{D}}_{n-1}$, respectively. This identification shows that the weight polynomial of $\widehat{\mathcal{P}}_{n}$ is exactly the polynomial $a_{0} A^{(1)}_{n-1}+b_{0} B^{(1)}_{n-1}$.

We can partition the set $\mathcal{P}_{n}$ into different subsets by looking at the first time that a path returns to zero. For each $k=1,\ldots,n$, we define $\mathcal{S}_{k}\subset\mathcal{P}_{n}$ to be the set of all paths that return first to zero at time $2k$. Then the sets $\mathcal{S}_{k}$ indeed form a partition of $\mathcal{P}_{n}$, and any path in $\mathcal{S}_{k}$ is a concatenation of a path in $\widehat{\mathcal{P}}_{k}$ with a horizontal translation of a path in $\mathcal{P}_{n-k}$. Hence the weight polynomial of $\mathcal{S}_{k}$ is $(a_{0} A^{(1)}_{k-1}+b_{0} B^{(1)}_{k-1}) W_{n-k}$ and adding we obtain the relation
\[
W_{n}=\sum_{k=1}^{n}(a_{0} A^{(1)}_{k-1}+b_{0} B^{(1)}_{k-1}) W_{n-k},\qquad n\geq 1.
\]
In terms of series this means
\[
z W(z)-1= (a_{0} A^{(1)}(z)+b_{0} B^{(1)}(z))\,W(z),
\]
which gives \eqref{eq:relWAB}.

If we use the same argument above, replacing $\mathcal{P}_{n}$ by $\mathcal{D}_{n}$ and $\overline{\mathcal{D}}_{n}$, we will obtain \eqref{eq:relAA}--\eqref{eq:relBB} in the case $k=0$. Since the polynomials $A^{(k)}$ and $B^{(k)}$ all have the same structure, it is obvious that the same relation will also hold for any $k\geq 1$.
\end{proof}

\begin{corollary}
We have the relation
\begin{equation}\label{eq:relWAB:2}
W(z)=\frac{1}{A(z)^{-1}+B(z)^{-1}-z}.
\end{equation}
For any $k\geq 1$, we have
\begin{equation}\label{eq:contfrac:1}
A(z)=\cfrac{1}{z-\cfrac{a_{0}}{z-\cfrac{a_{1}}{z-\raisebox{-0.5\height}{$\ddots -\cfrac{a_{k-1}}{z-a_{k} A^{(k+1)}(z)}$}}}},
\end{equation}
and a similar identity holds between the $B$ polynomials.
\end{corollary}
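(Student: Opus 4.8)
The plan is to derive both identities directly from the Proposition, using only algebraic manipulations of the formal Laurent series in $\mathbb{C}((z^{-1}))$. For the first identity \eqref{eq:relWAB:2}, I would start from \eqref{eq:relWAB}, which gives $W(z)^{-1} = z - a_{0} A^{(1)}(z) - b_{0} B^{(1)}(z)$. The key observation is that \eqref{eq:relAA} with $k=0$ reads $A(z)^{-1} = z - a_{0} A^{(1)}(z)$, hence $a_{0} A^{(1)}(z) = z - A(z)^{-1}$, and similarly $b_{0} B^{(1)}(z) = z - B(z)^{-1}$ from \eqref{eq:relBB}. Substituting these two expressions into the formula for $W(z)^{-1}$ yields
\[
W(z)^{-1} = z - (z - A(z)^{-1}) - (z - B(z)^{-1}) = A(z)^{-1} + B(z)^{-1} - z,
\]
which is exactly \eqref{eq:relWAB:2}. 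One should note that $A(z)$ and $B(z)$ are invertible in $\mathbb{C}((z^{-1}))$ because they have the form $z^{-1} + O(z^{-3})$, so their leading term is a unit; the same remark justifies that $W(z)^{-1}$ makes sense.

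For the continued-fraction identity \eqref{eq:contfrac:1}, I would argue by induction on $k$. The base case $k=1$ is precisely \eqref{eq:relAA} with $k=0$, namely $A(z) = 1/(z - a_{0} A^{(1)}(z))$, after one more expansion: writing $A^{(1)}(z) = 1/(z - a_{1} A^{(2)}(z))$, but actually the cleanest induction keeps the tail $A^{(k+1)}(z)$ symbolic. So: assume that for some $k\geq 1$ we have the displayed finite continued fraction terminating in $z - a_{k} A^{(k+1)}(z)$ (this holds for $k=1$ by \eqref{eq:relAA}). To pass to $k+1$, apply \eqref{eq:relAA} once more, now with index $k$, which gives $A^{(k+1)}(z) = 1/(z - a_{k+1} A^{(k+2)}(z))$, and substitute this into the innermost level of the continued fraction. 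This replaces the terminal expression $z - a_{k} A^{(k+1)}(z)$ by $z - \cfrac{a_{k}}{z - a_{k+1} A^{(k+2)}(z)}$, which is exactly the level-$(k+1)$ continued fraction. The identity for the $B$ polynomials is proved verbatim, replacing each $a_{i}$ by $b_{i}$ and each $A^{(j)}$ by $B^{(j)}$, using \eqref{eq:relBB} in place of \eqref{eq:relAA}.

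Honestly, there is no genuine obstacle here: the Corollary is a purely formal consequence of the three relations \eqref{eq:relWAB}--\eqref{eq:relBB} already established in the Proposition, and the only point requiring a word of care is the invertibility of the relevant series in $\mathbb{C}((z^{-1}))$ — but that is immediate from the normalization $W_{0} = A_{0} = B_{0} = 1$, which makes each of $W(z)$, $A(z)$, $B(z)$ equal to $z^{-1}$ plus higher-order terms and hence a unit times $z^{-1}$. The only mildly fussy aspect is bookkeeping the nesting of the continued fraction in the inductive step, i.e.\ making sure the substitution is performed at the innermost level and that the shifted relation \eqref{eq:relAA} is invoked with the correct index; this is routine.
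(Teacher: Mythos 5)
Your proposal is correct and is essentially the argument the paper has in mind: the paper simply states that the corollary ``follows immediately from \eqref{eq:relWAB}--\eqref{eq:relBB},'' and your substitution of $A(z)^{-1}=z-a_{0}A^{(1)}(z)$, $B(z)^{-1}=z-b_{0}B^{(1)}(z)$ into \eqref{eq:relWAB}, together with the iteration of \eqref{eq:relAA} for the continued fraction, is exactly that deduction spelled out. Your remark on invertibility of the series in $\mathbb{C}((z^{-1}))$ is a sensible extra detail the paper leaves implicit.
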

\begin{proof}
The relations follow immediately from \eqref{eq:relWAB}--\eqref{eq:relBB}.
\end{proof}

In view of \eqref{eq:relWAB:2} and \eqref{eq:contfrac:1}, the series $A(z)$ and $W(z)$ are naturally connected to certain random continued fractions:
\begin{align*}
A(z) & \rightarrow \cfrac{1}{z-\cfrac{a_{0}}{z-\cfrac{a_{1}}{z-\cfrac{a_{2}}{\ddots}}}}\\
W(z) & \rightarrow \cfrac{1}{z-\cfrac{a_{0}}{z-\cfrac{a_{1}}{z-\cfrac{a_{2}}{\ddots}}}-\cfrac{b_{0}}{z-\cfrac{b_{1}}{z-\cfrac{b_{2}}{\ddots}}}}.
\end{align*}

\subsection{Explicit formulae for the weight polynomials}

In this subsection we give some explicit formulae for the weight polynomials defined in \eqref{def:Wm}--\eqref{def:Bm}. We first introduce some definitions and notations.

Given an integer $n\geq 1$, let
\begin{equation}\label{def:partition:1}
\mathrm{C}(n):=\{(n_{0},\ldots,n_{r}): n_{0}+\cdots+n_{r}=n, n_{j}\in\mathbb{N}\,\,\mbox{for all}\,\,0\leq j\leq r\}
\end{equation}
denote the set of all integer compositions of $n$. For example,
\[
\mathrm{C}(4)=\{(4), (3,1), (1,3), (2,2), (2,1,1), (1,2,1), (1,1,2), (1,1,1,1)\}.
\]
It is also convenient to define the set
\begin{equation}\label{def:partition:2}
\mathrm{C}(0):=\{e\}
\end{equation}
whose only element is the empty sequence $e$.

The following formulas for the polynomials $A_{n}$ and $B_{n}$ are due to Flajolet \cite[Prop. 3B]{Flaj}: For any $n\in\mathbb{Z}_{\geq 0}$,
\begin{align}
A_{n} & =\sum_{(n_{0},\ldots,n_{r})\in\mathrm{C}(n)}\binom{n_{0}+n_{1}-1}{n_{0}-1}\binom{n_{1}+n_{2}-1}{n_{1}-1}\cdots \binom{n_{r-1}+n_{r}-1}{n_{r-1}-1}\,a_{0}^{n_{0}} a_{1}^{n_{1}}\cdots a_{r}^{n_{r}},\label{eq:Flaj:An}\\
B_{n} & =\sum_{(n_{0},\ldots,n_{r})\in\mathrm{C}(n)}\binom{n_{0}+n_{1}-1}{n_{0}-1}\binom{n_{1}+n_{2}-1}{n_{1}-1}\cdots \binom{n_{r-1}+n_{r}-1}{n_{r-1}-1}\,b_{0}^{n_{0}} b_{1}^{n_{1}}\cdots b_{r}^{n_{r}}.\label{eq:Flaj:Bn}
\end{align}
In the case $n=0$, we understand the right-hand sides of \eqref{eq:Flaj:An}--\eqref{eq:Flaj:Bn} to be $1$, and if $n\geq 1$, $r=0$, the product of the binomials to be $1$ as well. In \cite{Flaj}, Flajolet gives the polynomials $A_{n}$ the name \textit{Stieltjes-Rogers polynomials}.  

In order to make our formulas below more compact and manageable, we introduce some more definitions and notations. Given $n\in\mathbb{Z}_{\geq 0}$ and a composition $\ov{n}\in \mathrm{C}(n)$, we define
\begin{equation}\label{def:rho:1}
\rho_{1}(\ov{n}):=\begin{cases}
\prod_{j=0}^{r-1}\binom{n_{j}+n_{j+1}-1}{n_{j}-1} & \qquad \mbox{if}\,\,\ov{n}=(n_{0},\ldots,n_{r}),\,\,r\geq 1,\\
1 & \qquad \mbox{if}\,\,\ov{n}=(n),\,\,n\geq 1,\,\,\mbox{or}\,\,\ov{n}=e,
\end{cases}
\end{equation}
where $e$ is the element in \eqref{def:partition:2}.
We also define
\begin{equation}\label{def:func:a}
a(\ov{n}):=\begin{cases}
\prod_{j=0}^{r}a_{j}^{n_{j}} & \qquad \mbox{if}\,\,\ov{n}=(n_{0},\ldots,n_{r}),\,\,r\geq 0,\\
1 & \qquad \mbox{if}\,\,\ov{n}=e,
\end{cases}
\end{equation}
and similarly
\begin{equation}\label{def:func:b}
b(\ov{n}):=\begin{cases}
\prod_{j=0}^{r}b_{j}^{n_{j}} & \qquad \mbox{if}\,\,\ov{n}=(n_{0},\ldots,n_{r}),\,\,r\geq 0,\\
1 & \qquad \mbox{if}\,\,\ov{n}=e.
\end{cases}
\end{equation}
With these definitions, \eqref{eq:Flaj:An}--\eqref{eq:Flaj:Bn} take the form
\begin{align}
A_{n} & =\sum_{\ov{n}\in\mathrm{C}(n)}\rho_1(\ov{n})\,a(\ov{n}),\label{eq:Flaj:comp:An}\\
B_{n} & =\sum_{\ov{n}\in\mathrm{C}(n)}\rho_1(\ov{n})\,b(\ov{n}).\label{eq:Flaj:comp:Bn}
\end{align}

Our main goal in this subsection is to prove a formula for the polynomials $W_{n}$ that is analogous to \eqref{eq:Flaj:comp:An}--\eqref{eq:Flaj:comp:An}. To accomplish this we need some more definitions.

Given $n\in\mathbb{Z}_{\geq 0}$, let
\begin{equation}\label{def:genCn}
\widehat{\mathrm{C}}(n):=\bigcup_{j=0}^{n} \mathrm{C}(j)\times\mathrm{C}(n-j),
\end{equation}
i.e., $\widehat{\mathrm{C}}(n)$ consists of all pairs $(\ov{p}, \ov{q})$ with $\ov{p}\in\mathrm{C}(j)$ and $\ov{q}\in\mathrm{C}(n-j)$ for some $0\leq j\leq n$. Additionally, for a given $(\ov{p},\ov{q})\in\widehat{\mathrm{C}}(n)$ we define
\begin{equation}\label{def:genrho}
\rho_{2}((\overline{p},\overline{q})):=\begin{cases}
\binom{n_{0}+n_{0}'}{n_{0}}\,\rho_{1}(\overline{p})\,\rho_{1}(\overline{q}) & \qquad \mbox{if}\,\,\ov{p}=(n_{0},\ldots,n_{r}),\,\,r\geq 0,\,\,\ov{q}=(n_{0}',\ldots,n_{s}'),\,\,s\geq 0,\\[0.5em]
\rho_{1}(\overline{p}) & \qquad \mbox{if}\,\,\overline{q}=e,\\[0.5em]
\rho_{1}(\overline{q}) & \qquad \mbox{if}\,\,\overline{p}=e.
\end{cases}
\end{equation}
Using \eqref{def:genCn}--\eqref{def:genrho}, we give in \eqref{formula:Wm} below a formula for $W_{n}$.

In the sequel, we use the notation
\[
X(z):=A^{(1)}(z),\qquad Y(z):=B^{(1)}(z),
\]
see \eqref{def:seriesAk}--\eqref{def:seriesBk}. Then, in virtue of \eqref{eq:relWAB}--\eqref{eq:relBB}, we have
\begin{align}
W(z) & =\frac{1}{z-(a_{0} X(z)+b_{0} Y(z))},\label{eq:relWXY}\\
A(z) & =\frac{1}{z-a_{0} X(z)},\label{eq:relAX}\\
B(z) & =\frac{1}{z-b_{0} Y(z)}.\label{eq:relBY}
\end{align}

Finally, given a formal Laurent series $S(z)\in\mathbb{C}((z^{-1}))$, the notation $[S]_{n}$ will indicate the coefficient of $z^{-n}$ in the expression of $S(z)$. If $S(z)$ is not the zero series, we define
\begin{equation}\label{def:degreeseries}
\deg(S):=\max\{n\in\mathbb{Z}: [S]_{-n}\neq 0\}.
\end{equation}
Note that this agrees with the definition of degree of a polynomial in $z$.

\begin{proposition}
For any $n\geq 0$,
\begin{equation}
\begin{aligned}
A_{n} & =\sum_{k=0}^{n} a_{0}^{k}\, [X^{k}]_{2n-k},\\
B_{n} & =\sum_{k=0}^{n} b_{0}^{k}\, [Y^{k}]_{2n-k}.
\end{aligned}
\label{eq:polyABXY}
\end{equation}
Moreover, for every $k=0,\ldots,n$,
\begin{align}
[X^{k}]_{2n-k} & =\sum_{(n_{1},\ldots,n_{r})\in\mathrm{C}(n-k)}\binom{k+n_{1}-1}{k-1}\binom{n_{1}+n_{2}-1}{n_{1}-1}\cdots\binom{n_{r-1}+n_{r}-1}{n_{r-1}-1} a_{1}^{n_{1}}\cdots a_{r}^{n_{r}},\label{rep:powersX}\\
[Y^{k}]_{2n-k} & =\sum_{(n_{1},\ldots,n_{r})\in\mathrm{C}(n-k)}\binom{k+n_{1}-1}{k-1}\binom{n_{1}+n_{2}-1}{n_{1}-1}\cdots\binom{n_{r-1}+n_{r}-1}{n_{r-1}-1} b_{1}^{n_{1}}\cdots b_{r}^{n_{r}},\label{rep:powersY}
\end{align}
where $\mathrm{C}(n-k)$ is defined in \eqref{def:partition:1}--\eqref{def:partition:2}. The right-hand side in \eqref{rep:powersX}--\eqref{rep:powersY} is understood to be $1$ in the case $n=k$. For each $k=0,\ldots,n,$ the expression $a_{0}^{k}[X^{k}]_{2n-k}$ is the weight polynomial associated with the set of all Dyck paths of length $2n$ with exactly $k$ returns to zero.
\end{proposition}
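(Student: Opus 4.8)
The plan is to derive \eqref{eq:polyABXY} from the self-consistency relation \eqref{eq:relAX} by a geometric expansion, then to extract \eqref{rep:powersX}--\eqref{rep:powersY} by comparing powers of $a_0$ with Flajolet's formula \eqref{eq:Flaj:An}, and finally to justify the combinatorial interpretation through the first-return decomposition of Dyck paths. For the first part, I would note that $X=A^{(1)}$ begins with $z^{-1}$, so $a_0X(z)/z$ has negative degree and \eqref{eq:relAX} expands in $\mathbb{C}((z^{-1}))$ as
\[
A(z)=\frac1z\sum_{k\ge0}\Bigl(\frac{a_0X(z)}{z}\Bigr)^{k}=\sum_{k\ge0}a_0^{k}\,X(z)^{k}\,z^{-(k+1)},
\]
which is legitimate since each power of $z$ receives contributions from only finitely many $k$. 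Expanding the product, $X(z)^{k}=\sum_{m_1,\dots,m_k\ge0}\bigl(\prod_{i=1}^{k}A^{(1)}_{m_i}\bigr)z^{-(2(m_1+\cdots+m_k)+k)}$, so every exponent occurring in $X(z)^{k}$ is at least $k$ and of the same parity as $k$, and
\[
[X^{k}]_{2n-k}=\sum_{m_1+\cdots+m_k=n-k}\prod_{i=1}^{k}A^{(1)}_{m_i}.
\]
Reading off the coefficient of $z^{-(2n+1)}$ in the first display then makes the $k$-th term contribute $a_0^{k}[X^{k}]_{2n-k}$ (zero for $k>n$), and since $A_n=[A]_{2n+1}$ this yields the first identity in \eqref{eq:polyABXY}; \eqref{eq:relBY} gives the second one verbatim.

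For \eqref{rep:powersX}--\eqref{rep:powersY} I would observe that $X=A^{(1)}$, hence also $[X^{k}]_{2n-k}$, is a polynomial in $a_1,a_2,\dots$ alone, so the identity $A_n=\sum_{k=0}^{n}a_0^{k}[X^{k}]_{2n-k}$ is nothing but the expansion of $A_n$ into homogeneous parts in $a_0$; in particular $[X^{k}]_{2n-k}$ is the coefficient of $a_0^{k}$ in $A_n$. I then read this coefficient from Flajolet's formula \eqref{eq:Flaj:An}: the monomials of $A_n$ carrying exactly $a_0^{k}$ come from the compositions $(n_0,\dots,n_r)\in\mathrm{C}(n)$ with $n_0=k$, i.e. from $(n)$ when $k=n$ (coefficient $1$) and from $(k,n_1,\dots,n_r)$ with $(n_1,\dots,n_r)\in\mathrm{C}(n-k)$ when $0\le k<n$, with coefficient $\binom{k+n_1-1}{k-1}\prod_{j=1}^{r-1}\binom{n_j+n_{j+1}-1}{n_j-1}$ (the convention $\binom{m}{-1}=0$ handling $k=0$, $n\ge 1$, and the empty product handling $r=1$); this matches \eqref{rep:powersX} exactly. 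The identity \eqref{rep:powersY} follows either from \eqref{eq:relBY} and \eqref{eq:Flaj:Bn} by the same argument or from $B_n=A_n(b_0,\dots,b_{n-1})$.

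For the combinatorial claim, I would use the first-return decomposition: a path $\gamma\in\mathcal{D}_n$ with exactly $k$ returns to zero is uniquely a concatenation of $k$ \emph{primitive} Dyck paths $\gamma_1,\dots,\gamma_k$ --- each meeting the line $y=0$ only at its two endpoints --- of lengths $2\ell_1,\dots,2\ell_k$ with $\ell_1+\cdots+\ell_k=n$ and $\ell_i\ge 1$. A primitive path of length $2\ell$ consists of an initial up-step (weight $1$), a terminal down-step ending at height $0$ (weight $a_0$, since a down-step ending at height $j$ has weight $a_j$), and in between a horizontal translate of an element of $\mathcal{D}_{\ell-1}$ raised by one unit; raising by one unit replaces each $a_j$ by $a_{j+1}$ in every weight, so the weight polynomial of primitive paths of length $2\ell$ is $a_0A^{(1)}_{\ell-1}$. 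Since edge weights are invariant under horizontal translation, $w(\gamma)=\prod_i w(\gamma_i)$, and summing over $\gamma$ together with the decomposition gives
\[
\sum_{\substack{\gamma\in\mathcal{D}_n\\ k\text{ returns to }0}}w(\gamma)=\sum_{\substack{\ell_1+\cdots+\ell_k=n\\ \ell_i\ge 1}}\;\prod_{i=1}^{k}a_0A^{(1)}_{\ell_i-1}=a_0^{k}\sum_{m_1+\cdots+m_k=n-k}\prod_{i=1}^{k}A^{(1)}_{m_i}=a_0^{k}[X^{k}]_{2n-k},
\]
the last equality by the expression for $[X^{k}]_{2n-k}$ obtained above. (Letting $k$ run over $1,\dots,n$ and recalling that $\mathcal{D}_0$ is the single empty path, this also re-proves \eqref{eq:polyABXY}.)

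I do not expect a genuine obstacle here: the argument is formal-series manipulation plus the standard first-return decomposition of Dyck paths. The one place that needs attention is the bookkeeping in the coefficient comparison --- checking that $[X^{k}]_{2n-k}$ really involves no $a_0$, so that matching powers of $a_0$ is legitimate, and that the degenerate cases $k=0$, $k=n$, $r=0$ (and $r=1$) are consistent with the stated conventions, in particular that the right-hand side of \eqref{rep:powersX}--\eqref{rep:powersY} is $1$ when $n=k$.
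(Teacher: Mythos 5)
Your proposal is correct and follows essentially the same route as the paper: the paper obtains \eqref{eq:polyABXY} by citing its appendix lemma on series of the form $1/(z-S(z))$ (which your geometric-series expansion simply re-derives), and it proves \eqref{rep:powersX}--\eqref{rep:powersY} by exactly your argument of matching the $a_{0}^{k}$-homogeneous parts of $A_{n}$ against Flajolet's formula. Your first-return decomposition for the final combinatorial claim is the standard argument the paper leaves implicit ("follows immediately"), and it is carried out correctly.
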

\begin{proof}
The two formulas in \eqref{eq:polyABXY} follow immediately from Lemma~\ref{lemma:app:1} and the two relations \eqref{eq:relAX}--\eqref{eq:relBY}.

To justify \eqref{rep:powersX}, observe that the first relation in \eqref{eq:polyABXY} gives a decomposition of $A_{n}$ into $n+1$ different groups of terms, where the group $a_{0}^{k} [X^{k}]_{2n-k}$ gathers all terms in the expression of $A_{n}$ whose $a_{0}$ factor is exactly $a_{0}^{k}$. Therefore, the expression for $a_{0}^{k} [X^{k}]_{2n-k}$ is obtained by taking $n_{0}=k$ in \eqref{eq:Flaj:An}. This gives immediately \eqref{rep:powersX} after dividing by $a_{0}^{k}$. The proof for \eqref{rep:powersY} is obviously the same. The last statement in the Lemma also follows immediately.
\end{proof}

In the proof of \eqref{formula:Wm} below, we will apply \eqref{rep:powersX}--\eqref{rep:powersY} in the case when $k=0, n>0$. In this case, we obviously have $[X^{k}]_{2n-k}=[Y^{k}]_{2n-k}=0$, which we get through the formulas \eqref{rep:powersX}--\eqref{rep:powersY} if we adopt, as in \cite{Flaj}, the convention
\begin{equation}\label{convention}
\binom{n}{-1}=\delta_{n,-1}
\end{equation}
where $\delta$ is Kronecker's symbol. Indeed, if $k=0$, $n>0$, with this convention we get $\binom{k+n_{1}-1}{k-1}=\binom{n_{1}-1}{-1}=0$ since $n_{1}\geq 1$.

\begin{theorem}
The following formula holds for every $n\geq 0$:
\begin{equation}\label{formula:Wm}
W_{n}=\sum_{(\ov{p},\ov{q})\in\widehat{\mathrm{C}}(n)}\rho_{2}((\ov{p},\ov{q}))\,a(\ov{p})\,b(\ov{q}),
\end{equation}
see \eqref{def:genCn}--\eqref{def:genrho} and \eqref{def:func:a}--\eqref{def:func:b} for the meaning of notation.
\end{theorem}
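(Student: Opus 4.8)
The plan is to expand $W(z)$ as a geometric series, apply the binomial theorem to separate the $a$- and $b$-variables, and then read off the coefficient $W_{n}=[W]_{2n+1}$ using the preceding Proposition, which already resolves the coefficients $[X^{k}]_{2p-k}$ and $[Y^{\ell}]_{2q-\ell}$ in terms of integer compositions (here $X=A^{(1)}$, $Y=B^{(1)}$ as above).

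Starting from \eqref{eq:relWXY}, and noting that $a_{0}X(z)+b_{0}Y(z)$ has all its terms of degree $\le -1$ in $z$, I would expand
\[
W(z)=\sum_{m=0}^{\infty}\frac{(a_{0}X+b_{0}Y)^{m}}{z^{m+1}}=\sum_{k,\ell\ge 0}\binom{k+\ell}{k}\frac{a_{0}^{k}b_{0}^{\ell}\,X^{k}Y^{\ell}}{z^{k+\ell+1}},
\]
the second equality being the binomial theorem, legitimate because all these series commute. Since $X^{k}Y^{\ell}z^{-(k+\ell+1)}$ has degree $\le -(2k+2\ell+1)$, only the finitely many pairs with $k+\ell\le n$ contribute to the coefficient of $z^{-(2n+1)}$, so
\[
W_{n}=\sum_{k,\ell\ge 0}\binom{k+\ell}{k}a_{0}^{k}b_{0}^{\ell}\,[X^{k}Y^{\ell}]_{2n-k-\ell}.
\]

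Next I would use that $X$ has coefficients that are polynomials in $a_{1},a_{2},\dots$ only, $Y$ one in $b_{1},b_{2},\dots$ only, and that $[X^{k}]_{i}=0$ unless $i=2p-k$ for some integer $p\ge k$ (likewise $[Y^{\ell}]_{j}=0$ unless $j=2q-\ell$, $q\ge\ell$). Expanding the coefficient of the product and using these support and parity constraints, the condition $i+j=2n-k-\ell$ forces $p+q=n$, whence
\[
W_{n}=\sum_{p+q=n}\ \sum_{k,\ell\ge 0}\binom{k+\ell}{k}\bigl(a_{0}^{k}[X^{k}]_{2p-k}\bigr)\bigl(b_{0}^{\ell}[Y^{\ell}]_{2q-\ell}\bigr).
\]
I would then invoke the preceding Proposition, in particular \eqref{rep:powersX}--\eqref{rep:powersY}: for $k\ge 1$ one has $a_{0}^{k}[X^{k}]_{2p-k}=\sum_{\ov{p}}\rho_{1}(\ov{p})\,a(\ov{p})$, the sum over all $\ov{p}\in\mathrm{C}(p)$ with first part $k$, while for $k=0$ one has $a_{0}^{0}[X^{0}]_{2p}=\delta_{p,0}$, recorded as the $\ov{p}=e$ term (treating the ``first part'' of $e$ as $0$); the analogous statement holds for $Y$ and $\ov{q}\in\mathrm{C}(q)$. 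Substituting and writing $n_{0}$, $n_{0}'$ for the first parts of $\ov{p}$, $\ov{q}$, the double sum over $k,\ell$ becomes a sum over compositions, and by \eqref{def:genCn} the resulting $\sum_{p+q=n}\sum_{\ov{p}\in\mathrm{C}(p)}\sum_{\ov{q}\in\mathrm{C}(q)}$ is precisely $\sum_{(\ov{p},\ov{q})\in\widehat{\mathrm{C}}(n)}$, giving
\[
W_{n}=\sum_{(\ov{p},\ov{q})\in\widehat{\mathrm{C}}(n)}\binom{n_{0}+n_{0}'}{n_{0}}\rho_{1}(\ov{p})\,\rho_{1}(\ov{q})\,a(\ov{p})\,b(\ov{q}).
\]
It then remains to check that $\binom{n_{0}+n_{0}'}{n_{0}}\rho_{1}(\ov{p})\rho_{1}(\ov{q})$ equals $\rho_{2}((\ov{p},\ov{q}))$ from \eqref{def:genrho}: immediate in the generic case, and in the degenerate cases $\ov{p}=e$ or $\ov{q}=e$ the binomial collapses to $1$, reproducing the other two branches of \eqref{def:genrho}. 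This yields \eqref{formula:Wm}.

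The main obstacle I anticipate is the reorganisation in the last step: justifying that the coefficient of $X^{k}Y^{\ell}$ splits cleanly over $p+q=n$ via the support/parity constraints, and then bookkeeping the degenerate compositions $\ov{p}=e$, $\ov{q}=e$ (equivalently $k=0$ or $\ell=0$), together with the edge values coming from the convention \eqref{convention}, so that everything matches the piecewise definition of $\rho_{2}$. These manipulations are elementary, but this is where an off-by-one slip or a misattributed term would be most likely to creep in. (A purely bijective proof is also possible, decomposing a path in $\mathcal{P}_{n}$ into its maximal excursions above and below the axis and counting the $\binom{k+\ell}{k}$ ways of interleaving $k$ positive and $\ell$ negative excursions, but the formal-series route is shorter given what has already been established.)
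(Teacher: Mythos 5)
Your proposal is correct and follows essentially the same route as the paper's proof: expand $W(z)$ via \eqref{eq:relWXY} as a geometric/binomial series in $a_{0}X+b_{0}Y$ (the paper does this through Lemma~\ref{lemma:app:1}, yielding \eqref{firstexpWm}--\eqref{eq:decompWm:1}), split $[X^{k}Y^{\ell}]$ by the same convolution identity \eqref{eq:decompWm:2}, substitute \eqref{rep:powersX}--\eqref{rep:powersY}, and reconcile the degenerate compositions with the piecewise definition of $\rho_{2}$ via the convention \eqref{convention}. The only difference is cosmetic indexing (you sum directly over $(k,\ell)$ rather than over $m$ and then $n_{0}+n_{0}'=m$), so no further comparison is needed.
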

\begin{proof}
First, from \eqref{eq:relWXY} and Lemma~\ref{lemma:app:1}, we obtain that for every $n\geq 0$,
\begin{equation}\label{firstexpWm}
W_{n}=\sum_{m=0}^{n}[(a_{0} X+b_{0} Y)^{m}]_{2n-m}
=\sum_{m=0}^{n}\sum_{k=0}^{m}\binom{m}{k} a_{0}^{k}\,b_{0}^{m-k} \, [X^{k}\,Y^{m-k}]_{2n-m}.
\end{equation}
In order to make the formula symmetric, we rewrite the above expression in the form
\begin{equation}\label{eq:decompWm:1}
W_{n}=\sum_{m=0}^{n}\sum_{n_{0}+n_{0}'=m}
\binom{n_{0}+n_{0}'}{n_{0}} a_{0}^{n_{0}}\,b_{0}^{n_{0}'} \, [X^{n_{0}}\,Y^{n_{0}'}]_{2n-m},
\end{equation}
where the second summation is taken over the set
\[
\{(n_{0},n_{0}')\in\mathbb{Z}_{\geq 0}^{2}: n_{0}+n_{0}'=m\}.
\]
Now we identify the coefficient $[X^{n_{0}}\,Y^{n_{0}'}]_{2n-m}$ in the series expansion of $X^{n_{0}}\,Y^{n_{0}'}$. Writing
\[
X^{n_{0}}(z)\,Y^{n_{0}'}(z)=\sum_{i=0}^{\infty}\frac{[X^{n_{0}}]_{n_{0}+2i}}{z^{n_{0}+2i}}\,\sum_{j=0}^{\infty}\frac{[Y^{n_{0}'}]_{n_{0}'+2j}}{z^{n_{0}'+2j}},
\]
we deduce that
\begin{equation}\label{eq:decompWm:2}
[X^{n_{0}}\,Y^{n_{0}'}]_{2n-m}=\sum_{j=0}^{n-m}[X^{n_{0}}]_{n_{0}+2j}\,[Y^{n_{0}'}]_{2n-m-2j-n_{0}}.
\end{equation}
Applying \eqref{rep:powersX}--\eqref{rep:powersY},
\begin{align}
[X^{n_{0}}]_{n_{0}+2j} & =\sum_{(n_{1},\ldots,n_{r})\in\mathrm{C}(j)}\binom{n_{0}+n_{1}-1}{n_{0}-1}\cdots\binom{n_{r-1}+n_{r}-1}{n_{r-1}-1} a_{1}^{n_{1}}\cdots a_{r}^{n_{r}},\label{eq:decompWm:3}\\
[Y^{n_{0}'}]_{2n-m-2j-n_{0}} & =\sum_{(n_{1}',\ldots,n_{s}')\in\mathrm{C}(n-m-j)}\binom{n_{0}'+n_{1}'-1}{n_{0}'-1}\cdots\binom{n_{s-1}'+n_{s}'-1}{n_{s-1}'-1} b_{1}^{n_{1}'}\cdots b_{s}^{n_{s}'}.\label{eq:decompWm:4}
\end{align}

Combining \eqref{eq:decompWm:1}--\eqref{eq:decompWm:4} we obtain
\begin{equation}\label{formula:Wm:alt}
W_{n}=\sum_{m=0}^{n}\sum_{n_{0}+n_{0}'=m}\sum_{j=0}^{n-m}\sum_{\substack{(n_{1},\ldots,n_{r})\in\mathrm{C}(j)\\(n_{1}',\ldots,n_{s}')\in\mathrm{C}(n-m-j)}}\pi(n_{0},\ldots,n_{r};n_{0}',\ldots,n_{s}')\,a_{0}^{n_{0}}\cdots a_{r}^{n_{r}} b_{0}^{n_{0}'}\cdots b_{s}^{n_{s}'}
\end{equation}
where
\begin{equation}\label{def:kappa}
\pi(n_{0},\ldots,n_{r};n_{0}',\ldots,n_{s}')=\binom{n_{0}+n_{0}'}{n_{0}} \times \prod_{k=0}^{r-1}\binom{n_{k}+n_{k+1}-1}{n_{k}-1} \times \prod_{k=0}^{s-1}\binom{n_{k}'+n_{k+1}'-1}{n_{k}'-1}.
\end{equation}
The expression in \eqref{formula:Wm:alt} reduces to the expression in \eqref{formula:Wm}. Indeed, observe first that some terms in \eqref{formula:Wm:alt} give a null contribution. These are the terms obtained by taking $n_{0}=0$ and $j>0$, or taking $n_{0}'=0$ and $j<n-m$. In both cases the expression \eqref{def:kappa} is zero as a consequence of \eqref{convention}. In the remaining cases, if we construct the vectors $\ov{p}=(n_{0},\ldots,n_{r})$ and $\ov{q}=(n_{0}',\ldots,n_{s}')$, then clearly $(\ov{p},\ov{q})\in\widehat{\mathrm{C}}(n)$ (if one of the vectors is the zero vector, we identify it with the element $e$ in $\mathrm{C}(0)$) and we have
\[
\pi(n_{0},\ldots,n_{r};n_{0}',\ldots,n_{s}')\,a_{0}^{n_{0}}\cdots a_{r}^{n_{r}}\, b_{0}^{n_{0}'}\cdots b_{s}^{n_{s}'}=\rho_{2}((\ov{p},\ov{q}))\,a(\ov{p})\,b(\ov{q}).
\]
It is also clear that there is a one-to-one correspondance between the terms in \eqref{formula:Wm:alt} that give a non-zero contribution and the set of all pairs $(\ov{p},\ov{q})\in\widehat{\mathrm{C}}(n)$. With this we conclude the proof.
\end{proof}

Below we present alternative formulae for the polynomials $A_{n}$ and $W_{n}$. These formulae are, however, not convenient for our purposes, and they will not be used in the rest of the paper, but we include them for their independent interest. Formula \eqref{eq:altformAm} for $A_{n}$ appears in Touchard \cite{Tou}. It is also a particular case of what Aptekarev-Kaliaguine-Van Iseghem \cite[sections 1 and 3]{AptKalIs} call \emph{genetic sums}. For the sake of completeness, we give an independent proof of \eqref{eq:altformAm}. Note also that, in contrast to \eqref{formula:Wm}, the formula given below for $W_{n}$ is asymmetric, in the sense that we write $W_{n}$ as a polynomial in the variables $\{a_{j}\}_{j=-n}^{n-1}$, instead of the variables $\{a_{j}, b_{j}\}_{j=0}^{n-1}$.

\begin{proposition}
The following formulas hold for every $n\geq 0$,
\begin{align}
A_{n} & = \sum_{i_{1}=0}^{0}\,\sum_{i_{2}=0}^{i_{1}+1}\,\sum_{i_{3}=0}^{i_{2}+1}
\cdots\sum_{i_{n}=0}^{i_{n-1}+1}\,\prod_{j=1}^{n} a_{i_{j}},\label{eq:altformAm}\\
W_{n} & =\sum_{i_{1}=-1}^{n-1}\,\sum_{i_{2}=i_{1}-1}^{n-2}\,\sum_{i_{3}=i_{2}-1}^{n-3}
\cdots\sum_{i_{n}=i_{n-1}-1}^{0}\,\prod_{j=1}^{n} a_{i_{j}}.\label{eq:altformWm}
\end{align}
\end{proposition}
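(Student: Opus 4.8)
The plan is to prove both identities by the same lattice-path counting principle used throughout this section: expand the weight polynomial as a sum over paths and parametrize each path by the sequence of heights it reaches. For \eqref{eq:altformAm}, recall that $A_n$ is the weight polynomial of $\mathcal{D}_n$, the Dyck paths of length $2n$ with starting and ending point on the line $y=0$ and $\min(\gamma)=0$. A Dyck path is determined by the positions of its $n$ down steps; equivalently, by the heights of the ending vertices of the down steps, read in the order the down steps occur along the path. If the $j$th down step (in time order) ends at height $i_j$, then by the weight convention \eqref{weightedges} that down step contributes the factor $a_{i_j}$, so $w(\gamma)=\prod_{j=1}^n a_{i_j}$. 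The constraints are: $i_1=0$ (a Dyck path begins with an up step to height $1$ and its first down step returns to height... wait, let me re-index — with the convention here the first down step from height $1$ ends at height $0$, and the variable attached is $a_{m-1}=a_0$, so indeed $i_1=0$); after a down step ending at height $i$, the path is at height $i$, and before the next down step it may take any number $\geq 0$ of up steps, so the next down step ends at a height $i_{j+1}$ with $0\le i_{j+1}\le i_j+1$; and the Dyck condition $\min(\gamma)=0$ together with the endpoint condition forces exactly this range and no negative heights. Conversely any sequence $(i_1,\dots,i_n)$ with $i_1=0$ and $0\le i_{j+1}\le i_j+1$ determines a unique Dyck path: between consecutive down steps insert the appropriate number of up steps, and the total up-step count automatically matches the down-step count because the path returns to $0$. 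This bijection gives \eqref{eq:altformAm} directly, once one checks that the summation ranges in the displayed formula are exactly $i_1\in\{0\}$, $i_2\in\{0,\dots,i_1+1\}$, and so on.

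For \eqref{eq:altformWm} the argument is the same in spirit but now applied to $\mathcal{P}_n$, the generalized Dyck paths from $(0,0)$ to $(2n,0)$ with no sign restriction on the heights, and we record the heights of the ending vertices of the down steps again. A down step ending at height $m$ has weight $a_{m-1}$ by \eqref{weightedges} (here $m$ may be zero or negative, and $a_{m-1}$ for $m\le 0$ is one of the $b$ variables via \eqref{def:bn}, which is how the formula ends up a polynomial in $\{a_j\}_{j=-n}^{n-1}$). Writing $i_j$ for the subscript of the variable attached to the $j$th down step, so that this down step ends at height $i_j+1$, the weight is again $\prod_{j=1}^n a_{i_j}$. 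The combinatorial constraints now are: the first down step can start from any height reachable by an initial run of up steps, i.e. from any height in $\{0,1,\dots\}$, but since only $n$ down steps remain the maximal height ever attained is at most $n-1$, hence $i_1$ ranges over $-1,0,\dots,n-1$ (the value $i_1=-1$ corresponding to a down step from height $0$ to height $-1$, allowed because there is no floor); after a down step the path sits at height $i_j+1$, and the next down step ends at a height between (current height) $-1$... more precisely $i_{j+1}+1$ can be any value with $i_{j+1}\ge i_j-1$ (the down step can immediately follow, dropping one more), up to the constraint that only $n-j$ down steps remain so the height stays $\le n-j$, giving $i_{j+1}\le n-j-1$; and the endpoint condition at time $2n$ forces the last down step to land at height $0$, i.e. $i_n=-1$... one must check the last range reads $i_n\in\{i_{n-1}-1,\dots,0\}$, which is consistent since the path must return to $0$. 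Verifying that these ranges are \emph{exactly} the ones in \eqref{eq:altformWm}, and that every such height sequence extends uniquely to a path in $\mathcal{P}_n$ (again the up-step count is forced by the return condition), completes the proof.

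The main obstacle is bookkeeping the inequalities correctly, in particular getting the endpoints of each summation right: one has to argue simultaneously a lower bound coming from "how far down the path can have gone" (a down step may always immediately follow the previous one, so $i_{j+1}\ge i_j-1$) and an upper bound coming from "how high the path can still be given the budget of remaining steps" (with $n-j$ down steps and any number of up steps left but a mandatory return to $0$, the reachable heights are capped), plus the boundary cases $j=1$ and $j=n$ where the start/end conditions pin things down. A clean way to handle the upper bound is to apply the involution $\gamma\mapsto\gamma^*$ (reflection about the vertical line $x=n$, under which $\mathcal{P}_n$ and $\mathcal{D}_n$ are each invariant) to turn a statement about "remaining budget" into a statement about "initial prefix," or simply to induct on $n$ using the self-similar structure of these path families already exploited in the proof of \eqref{eq:relWAB}. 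The homogeneity and degree count from the preceding Proposition serve as a useful sanity check: both sides of each identity are homogeneous of degree $n$.
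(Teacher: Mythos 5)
Your treatment of \eqref{eq:altformWm} is essentially the paper's argument: record the ending heights of the down steps in left-to-right time order, get the lower bound $i_{j+1}\ge i_j-1$ from the possibility of an immediate further descent, and get the upper bound $i_{j+1}\le n-j-1$ from the budget of remaining up steps needed to return to height $0$. Apart from some off-by-one slips in the prose (after a down step ending at height $i_j$ the path sits at height $i_j$, not $i_j+1$; the maximal attainable height is $n$, not $n-1$; the last down step need not land at height $0$, only at height $\le 0$), that half is sound.

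The argument for \eqref{eq:altformAm}, however, has a genuine error. You read the ending heights of the down steps in forward time order and assert $i_1=0$ and $0\le i_{j+1}\le i_j+1$. Neither constraint holds for the forward reading: the first down step of a Dyck path may end at any height between $0$ and $n-1$ (for two up steps followed by two down steps, the first down step ends at height $1$), and after a down step ending at height $i_j$ the next down step ends at height $i_j+u-1$ for some number $u\ge 0$ of intervening up steps, so the true forward constraint is $i_{j+1}\ge\max(0,\,i_j-1)$ together with a budget-dependent \emph{upper} bound --- not $i_{j+1}\le i_j+1$. Your converse construction fails for the same reason: for $n=3$ the admissible triple $(0,1,2)$ does not arise from any Dyck path of length $6$ under the forward reading, and inserting ``the appropriate number of up steps'' produces a path of length $8$ ending at height $2$. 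The summation ranges in \eqref{eq:altformAm} correspond to reading the down steps from \emph{right to left}, which is what the paper does: the last down step of a Dyck path necessarily ends at height $0$ (this gives $i_1=0$), and if $d$ and $d'$ are consecutive down steps with $d$ to the left of $d'$, ending at heights $\lambda$ and $\lambda'$, then the $u\ge 0$ up steps between them force $\lambda=\lambda'+1-u\le \lambda'+1$, while the Dyck condition forces $\lambda\ge 0$; this yields exactly $0\le i_{j+1}\le i_j+1$ with no budget bookkeeping at all. Since the product $\prod_{j}a_{i_j}$ is insensitive to the order of its factors, the stated formula is unharmed; it is only your justification of the index set that breaks. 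Your closing remark about the involution $\gamma\mapsto\gamma^{*}$, under which $\mathcal{D}_n$ is invariant, is in fact the repair --- it converts the backward reading into a forward one --- but as written you offer it as an optional convenience for the upper bounds rather than as the necessary fix for \eqref{eq:altformAm}.
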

\begin{proof}
Given a general path $\gamma\in\mathcal{P}_{n}$, we denote by $d_{j}$, $j=1,\ldots,n,$ the $j$th down step edge of $\gamma$, counting from left to right.

We first prove \eqref{eq:altformWm}. For each $j=1,\ldots,n$, let $a_{i_j}$ be the weight of the edge $d_{j}$ in the general path $\gamma$, i.e., $a_{i_j}=w(d_{j})$, cf. \eqref{weightedges}. We first look at the possible values for $a_{i_1}=w(d_{1})$. These values are clearly those with index in the range $-1\leq i_{1}\leq n-1$, and observe that the number of up step edges that precede $d_{1}$ is $i_{1}+1$. Indeed, $i_{1}=-1$ if $d_{1}$ is the first edge in $\gamma$, $i_{1}=0$ if $d_{1}$ is the second edge in $\gamma$, and so on, up to the value $i_{1}=n-1$ in case that the first $n$ edges in $\gamma$ are all up step edges. For a fixed value of $i_{1}$, the number of remaining up step edges that follow $d_{1}$ is $n-i_{1}-1$. Hence the possible values for $a_{i_{2}}=w(d_{2})$ are those with index in the range $i_{1}-1\leq i_{2}\leq n-2$, where the upper bound is obtained by adding $n-i_{1}-1$, the number of remaining up step edges, to $i_{1}-1$. For a fixed value of $i_{2}$ in that range, the total number of up step edges that precede $d_{2}$ is $(i_{1}+1)+(i_{2}-i_{1}+1)=i_{2}+2$, so the total number of up step edges that follow $d_{2}$ is $n-i_{2}-2$. In general, given $2 \leq j \leq n$, suppose that one of the possible values for $i_{j-1}$ is fixed, and the number of up step edges that follow $d_{j-1}$ is $n-i_{j-1}-j+1$. Then clearly the possible values for $i_{j}$ are in the range $i_{j-1}-1\leq i_{j}\leq n-j$, and the number of up step edges that follow $d_{j}$ is $n-i_{j}-j$. This shows that the collection of all possible values for $w(\gamma)$, $\gamma\in\mathcal{P}_{n}$, is exactly given by \eqref{eq:altformWm}.

The proof of \eqref{eq:altformAm} follows the same reasoning, with the difference that the analysis is done backwards instead of forward. Now we let $a_{i_{j}}:=w(d_{n-j+1})$, $1\leq j\leq m$, so $a_{i_{1}}$ is the weight of the last down step $d_{n}$, $a_{i_{2}}$ is the weight of $d_{n-1}$, and so on. If $\gamma\in\mathcal{D}_{n}$ is a Dyck path, then necessarily its last edge is the down step edge $d_{n}$ that joins the points $(2n-1,1)$ and $(2n,0)$, hence $a_{i_{1}}=a_0$. The possible values for $a_{i_{2}}$ are $a_{0}$ (if there is exactly one up step edge between $d_{n-1}$ and $d_{n}$) and $a_{1}$ (if there is no up step edge between $d_{n-1}$ and $d_{n}$), i.e., $0\leq i_{2}\leq 1$. So the number of up step edges between $d_{n-1}$ and $d_{n}$ is $1-i_{2}$. For a fixed value of $i_{2}$, the possible values for $i_{3}$ are in the range $0\leq i_{3}\leq i_{2}+1$, since between $d_{n-2}$ and $d_{n-1}$ one can have at most $i_{2}+1$ up step edges. For a fixed value of $i_{3}$, the number of up step edges between $d_{n-2}$ and $d_{n-1}$ is $i_{2}-i_{3}+1$, so the total number of up step edges that precede $d_{n-2}$ is $n-(i_{2}-i_{3}+1)-(1-i_{2})=n+i_{3}-2$. In general, by induction one can show that for each $2\leq j\leq n$, the possible values for $i_{j}$ are $0\leq i_{j}\leq i_{j-1}+1$ and the total number of up step edges that precede $d_{n-j+1}$ is $n+i_{j}-j+1$. In the final case $j=n$, this shows that the number of up steps that precede $d_{1}$ is precisely $i_{n}+1$. This concludes the proof of \eqref{eq:altformAm}.
\end{proof}

\section{The sequences $(\alpha_{n})_{n=0}^{\infty}$, $(\omega_{n})_{n=0}^{\infty}$, and associated formal series}\label{sec:formseries}

In this section we describe some properties of the two sequences $(\alpha_{n})_{n=0}^{\infty}$ and $(\omega_{n})_{n=0}^{\infty}$ defined by
\begin{align}
\alpha_{n} & :=\mathbb{E}(A_{n}),\qquad n\geq 0,\label{def:alpham}\\
\omega_{n} & :=\mathbb{E}(W_{n}),\qquad n\geq 0.\label{def:omegam}
\end{align}
Note that in virtue of \eqref{eq:finitemoments} and the definition of the polynomials $A_{n}$ and $W_{n}$, the values $\alpha_{n}$ and $\omega_{n}$ are finite for all $n$. The first few values are
\begin{align*}
\alpha_{0} & =1\\
\alpha_{1} & =m_{1}\\
\alpha_{2} & =m_{2}+m_{1}^{2}\\
\alpha_{3} & =m_{3}+3 m_{2} m_{1}+m_{1}^{3}\\
\alpha_{4} & =m_{4}+4 m_{3} m_{1}+3 m_{2}^2+5 m_{2} m_{1}^{2}+m_{1}^4\\
\alpha_{5} & =m_{5}+5 m_{4} m_{1}+10 m_{3} m_{2}+7 m_{3} m_{1}^2+11 m_{2}^2 m_{1}+7 m_{2} m_{1}^3+m_{1}^5\\
\omega_{0} & =1\\
\omega_{1} & = 2m_{1}\\
\omega_{2} & = 2 m_{2}+4 m_{1}^{2}\\
\omega_{3} & = 2 m_{3}+12 m_{2} m_{1}+6 m_{1}^{3}\\
\omega_{4} & = 2 m_{4}+16 m_{3} m_{1}+12 m_{2}^2+32 m_{2} m_{1}^2+8 m_{1}^4\\
\omega_{5} & = 2 m_{5}+20 m_{4} m_{1}+40 m_{3} m_{2}+50 m_{3} m_{1}^2+70 m_{2}^2 m_{1}+60 m_{2} m_{1}^3+10 m_{1}^5.
\end{align*}

In this section we also define the quantities
\begin{equation}\label{defin:alphank}
\alpha_{n}^{(k)}:=\mathbb{E}([A^{k}]_{k+2n}),\qquad k, n\in\mathbb{Z}_{\geq 0},
\end{equation}
where $A$ is the series in \eqref{def:seriesA}. These values are all finite. Observe that $\alpha_{0}^{(k)}=1$ for all $k\geq 0$ and $\alpha_{n}^{(0)}=0$ for all $n>0$. Also note that $\alpha_{n}^{(1)}=\alpha_{n}$ for all $n\geq 0$.

We also introduce the following formal Laurent series:
\begin{align}
g_{k}(z) & :=\sum_{n=0}^{\infty}\frac{\alpha_{n}^{(k)}}{z^{2n+k}},\qquad k\geq 0,\label{def:seriesgk}\\
f(z) & :=\sum_{n=0}^{\infty}\frac{\omega_{n}}{z^{2n+1}},\label{def:seriesf}
\end{align}
and note that $g_{0}\equiv 1$. Finally, in analogy to \eqref{def:func:a}--\eqref{def:func:b} we define, for each $\ov{n}\in \mathrm{C}(n)$, $n\geq 0$, the function
\begin{equation}\label{def:mfunc}
m(\ov{n}):=\begin{cases}
\prod_{j=0}^{r}m_{n_{j}} & \qquad \mbox{if}\,\,\ov{n}=(n_{0},\ldots,n_{r}),\,\,r\geq 0,\\
1 & \qquad \mbox{if}\,\,\ov{n}=e.
\end{cases}
\end{equation}

\begin{proposition}\label{prop:analyticrel}
The following identities hold. For every $n\in\mathbb{Z}_{\geq 0}$,
\begin{align}
\alpha_{n} & =\sum_{\ov{n}\in\mathrm{C}(n)}\rho_1(\ov{n})\,m(\ov{n}),\label{eq:formalpha}\\
\omega_{n} & =\sum_{(\ov{p},\ov{q})\in\widehat{\mathrm{C}}(n)}\rho_{2}(\ov{p},\ov{q})\,m(\ov{p})\,m(\ov{q}).\label{eq:formomega}
\end{align}
For all $k, n\in\mathbb{Z}_{\geq 0}$,
\begin{equation}\label{eq:formalphamk}
\alpha_{n}^{(k)} =\sum_{\ov{n}\in\mathrm{C}(n)}\binom{\ov{n}(1)+k-1}{k-1}\,\rho_{1}(\ov{n})\,m(\ov{n}),
\end{equation}
where $\ov{n}(1)$ denotes the first entry of $\ov{n}$.
\end{proposition}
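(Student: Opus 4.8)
The plan is to derive all three identities by taking expectations in the corresponding deterministic formulas from Section~\ref{sec:paths}, using the i.i.d.\ structure of the recurrence coefficients. The key observation is that the recurrence coefficients appearing in the relevant polynomials are all \emph{distinct} variables with \emph{disjoint index sets}, so that expectation factors through products of single moments.

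For \eqref{eq:formalpha}: start from Flajolet's formula \eqref{eq:Flaj:comp:An}, namely $A_{n}=\sum_{\ov{n}\in\mathrm{C}(n)}\rho_{1}(\ov{n})\,a(\ov{n})$. Here $a(\ov{n})=\prod_{j=0}^{r}a_{j}^{n_{j}}$ involves the \emph{distinct} random variables $a_{0},\ldots,a_{r}$, which are independent. Hence $\mathbb{E}(a(\ov{n}))=\prod_{j=0}^{r}\mathbb{E}(a_{j}^{n_{j}})=\prod_{j=0}^{r}m_{n_{j}}=m(\ov{n})$, using that the $a_{j}$ all have common distribution $\mu$ and the definition \eqref{def:mfunc}. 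Taking expectation term by term (a finite sum, and each term has finite expectation by \eqref{eq:finitemoments}) and using linearity gives \eqref{eq:formalpha}. The identity \eqref{eq:formomega} is entirely analogous, starting from \eqref{formula:Wm}: each summand $\rho_{2}((\ov{p},\ov{q}))\,a(\ov{p})\,b(\ov{q})$ involves the distinct variables $a_{0},\ldots,a_{r}$ from $\ov{p}$ and $b_{0},\ldots,b_{s}$ from $\ov{q}$; recalling $b_{j}=a_{-j-1}$, these $r+s+2$ variables have \emph{pairwise disjoint} indices in $\mathbb{Z}$, so they are mutually independent, and $\mathbb{E}(a(\ov{p})\,b(\ov{q}))=m(\ov{p})\,m(\ov{q})$.

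For \eqref{eq:formalphamk}: I would use the representation \eqref{rep:powersX} of $[X^{k}]_{2n-k}$ together with the identity $[A^{k}]_{k+2n}=\sum_{\ell=0}^{?}\ldots$; more directly, one can note that from \eqref{rep:powersX}, after reindexing, $[A^{k}]_{k+2n}$ equals $\sum_{(n_{0},\ldots,n_{r})\in\mathrm{C}(n),\,n_{0}\geq\ldots}$ — actually the cleanest route is to recognize that $[A^{k}]_{k+2n}$ is obtained from the expansion of $A(z)^{k}$, and by the same Flajolet-type combinatorics (take $n_{0}=0$ is not allowed; rather, group by the exponent of the leading block) one has $[A^{k}]_{k+2n}=\sum_{\ov{n}\in\mathrm{C}(n)}\binom{\ov{n}(1)+k-1}{k-1}\rho_{1}(\ov{n})\,a(\ov{n})$ as a \emph{deterministic} identity. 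This deterministic identity can be proved by the path interpretation: $a_{0}^{\,k}[X^{k}]_{2n-k}$ is the weight polynomial of Dyck paths of length $2n$ with exactly $k$ returns to zero (last statement of the Proposition preceding \eqref{formula:Wm}), hence dividing by $a_{0}^{k}$ and shifting indices gives the claim. Once this deterministic form is in hand, take expectations using independence of $a_{0},\ldots,a_{r}$ exactly as before to get \eqref{eq:formalphamk}.

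The main obstacle is the last identity \eqref{eq:formalphamk}: one must carefully establish the deterministic combinatorial formula for $[A^{k}]_{k+2n}$ with the correct binomial weight $\binom{\ov{n}(1)+k-1}{k-1}$ on the first block. I expect to handle this by combining \eqref{eq:polyABXY} and \eqref{rep:powersX}: writing $A_{n}=\sum_{k=0}^{n}a_{0}^{k}[X^{k}]_{2n-k}$ shows $a_{0}^{k}[X^{k}]_{2n-k}$ is the part of $A_{n}$ with $a_{0}$-degree exactly $k$; separately, $[A^{k}]_{k+2n}$ counts (with weights) the $k$-fold products of Dyck-path blocks whose total length is $2n+2k$, which is precisely the weight polynomial of length-$(2n+2k)$ Dyck paths with exactly $k$ returns to zero — matching the same combinatorial object with $n\mapsto n+k$. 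Reconciling the indices and verifying that the binomial coefficient $\binom{n_{1}+n_{0}-1}{n_{0}-1}$ from \eqref{rep:powersX} with $n_{0}=k$, $n_{1}=\ov{n}(1)$ becomes $\binom{\ov{n}(1)+k-1}{k-1}$ is the one genuinely delicate bookkeeping step; everything after that is routine linearity of expectation plus independence, using \eqref{eq:finitemoments} to justify term-by-term integration of the finite sums.
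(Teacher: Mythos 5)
Your proposal is correct and follows essentially the same route as the paper: take expectations in \eqref{eq:Flaj:An}, \eqref{formula:Wm} and \eqref{rep:powersX}, and use linearity together with the fact that the coefficients are i.i.d.\ with finite moments \eqref{eq:finitemoments} (and, for \eqref{eq:formomega}, that the $a_j$ and $b_j=a_{-j-1}$ occupy disjoint index sets). The only difference is cosmetic: for \eqref{eq:formalphamk} the paper sidesteps your deterministic reindexing of \eqref{rep:powersX} from $X^{k}$ to $A^{k}$ by simply noting that $\mathbb{E}([X^{k}]_{k+2n})=\mathbb{E}([A^{k}]_{k+2n})$, since $X=A^{(1)}$ is $A$ with shifted, identically distributed variables.
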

\begin{proof}
The formulas are obtained immediately by taking the expected value in \eqref{eq:Flaj:An}, \eqref{formula:Wm} and \eqref{rep:powersX}, and using the fact that the random variables $\{a_{n}, b_{n}\}_{n=0}^{\infty}$ are i.i.d. with moments \eqref{eq:finitemoments}. Also note that $\mathbb{E}([X^{k}]_{k+2n})=\mathbb{E}([A^{k}]_{k+2n})$ for all $k, n\geq 0$.
\end{proof}

Before we state our next result, we make a clarification regarding the expressions on the right-hand sides of \eqref{eq:relgkgk}--\eqref{eq:relfgk}. In general, the addition of infinitely many series in the space $\mathbb{C}((z^{-1}))$ is not well-defined. However, if we have a sequence $(\xi_{n}(z))_{n=0}^{\infty}$ of series such that $\deg(\xi_{n})\longrightarrow -\infty$ as $n\rightarrow\infty$ (cf. \eqref{def:degreeseries}), then the expression $\xi(z)=\sum_{n=0}^{\infty}\xi_{n}(z)$ is well-defined as the series whose coefficients are $[\xi]_{j}=\sum_{n=0}^{\infty}[\xi_{n}]_{j}$, $j\in\mathbb{Z}$, since the latter summation is in fact finite for every $j$. This is the case of the expressions on the right-hand sides of \eqref{eq:relgkgk}--\eqref{eq:relfgk}.

\begin{theorem}\label{theo:analyticrel}
The following relations hold. For any $n\geq 0$,
\begin{align}
\alpha_{n}^{(k)} & =\sum_{j=0}^{n}\binom{j+k-1}{k-1}\,m_{j}\,\alpha_{n-j}^{(j)},\qquad k\geq 0,\label{eq:relalphas}\\
\omega_{n} & =\sum_{j=0}^{n}\sum_{\ell=0}^{n-j} m_{j}\,\alpha_{\ell}^{(j)}\,\alpha_{n-j-\ell}^{(j+1)},\label{eq:relomegaalpha:simple}\\
\omega_{n} & =\sum_{j=0}^{n}\sum_{i=0}^{j}\sum_{\ell=0}^{n-j}\binom{j}{i}\, m_{i}\,m_{j-i}\,\,\alpha_{\ell}^{(i)}\,\alpha_{n-j-\ell}^{(j-i)}.\label{eq:relomegaalpha}
\end{align}
We also have the following relations:
\begin{align}
g_{k}(z) & =\sum_{j=0}^{\infty}\binom{j+k-1}{k-1} \frac{m_{j}\,g_{j}(z)}{z^{j+k}},\qquad k\geq 0,\label{eq:relgkgk}\\
f(z) & =\sum_{j=0}^{\infty} m_{j}\,g_{j}(z)\,g_{j+1}(z), \label{eq:relfgk:simple}\\
f(z) & =\sum_{j=0}^{\infty}\sum_{i=0}^{j}\binom{j}{i}\frac{m_{i}\,m_{j-i}\,g_{i}(z)\,g_{j-i}(z)}{z^{j+1}}.\label{eq:relfgk}
\end{align}
\end{theorem}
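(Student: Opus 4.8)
The plan is to derive each identity by taking expectations in the corresponding deterministic relation for the weight polynomials, using independence and the i.i.d.\ property of the $a_n$ (equivalently the $b_n$). The deterministic backbone is supplied by the recursions \eqref{eq:relAA}--\eqref{eq:relBB} and the decoupling formula \eqref{eq:relWAB}, together with the expansion of $A^{(k)}$ into powers of $a_k$ as in Proposition~3.? (the one containing \eqref{rep:powersX}). First I would establish \eqref{eq:relalphas}. Starting from \eqref{eq:relAA} with $k=0$, expand $A(z)=\sum_{k\ge 0}a_0^k X(z)^k / z^{\,\cdot}$ (more precisely use $A=1/(z-a_0X)=\sum_{k\ge0}a_0^k X^k/z^{k+1}$), so that $[A^{\,?}]$... actually the cleaner route is: from \eqref{eq:polyABXY}, $A_n=\sum_{k=0}^n a_0^k[X^k]_{2n-k}$, and then iterate; but the slickest is to note $A^{(0)}(z)$ and $A^{(1)}(z)$ have the same distribution, and $[X^k]_{k+2n}$ depends only on $a_1,a_2,\dots$, hence is independent of $a_0$. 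Taking expectations in $A_n=\sum_{k}a_0^k[X^k]_{2n-k}$ and using \eqref{rep:powersX} to identify $\mathbb E([X^k]_{2n-k})$ with $\mathbb E([A^k]_{2n-k})=\alpha_{n-k}^{(k)}$ (after shifting indices, since $X=A^{(1)}$ is distributionally a copy of $A$), together with $\mathbb E(a_0^k)=m_k$ and the binomial $\binom{k+j-1}{k-1}$ coming from the $n_1=j$ term in \eqref{rep:powersX}, gives \eqref{eq:relalphas}. The series form \eqref{eq:relgkgk} is just the generating-function repackaging: multiply \eqref{eq:relalphas} by $z^{-(2n+k)}$, sum over $n$, and recognize $\sum_n \alpha_{n-j}^{(j)} z^{-(2(n-j)+j)} = g_j(z)$ up to the shift $z^{-(j+k)}$; the convergence caveat stated just before the theorem guarantees the right side is a well-defined element of $\mathbb C((z^{-1}))$ because $\deg$ of the $j$th summand tends to $-\infty$.

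For \eqref{eq:relomegaalpha:simple} I would take expectations in \eqref{eq:relWXY}, i.e.\ $W=1/(z-a_0X-b_0Y)=\sum_{m\ge0}(a_0X+b_0Y)^m z^{-(m+1)}$, but it is cleaner to start one level earlier from \eqref{eq:relWAB}: $W=1/(z-a_0A^{(1)}-b_0B^{(1)})$. Expanding the geometric series and grouping by the number of $a_0$ versus $b_0$ factors — as already done in the proof of Theorem~\ref{formula:Wm} via \eqref{firstexpWm}--\eqref{eq:decompWm:1} — one has $W_n=\sum_{m}\sum_{n_0+n_0'=m}\binom{m}{n_0}a_0^{n_0}b_0^{n_0'}[X^{n_0}Y^{n_0'}]_{2n-m}$. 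The key structural point is that $X=A^{(1)}$ involves only $a_1,a_2,\dots$ while $Y=B^{(1)}$ involves only $b_1,b_2,\dots=a_{-2},a_{-3},\dots$; thus $a_0$, $b_0$, $X$, and $Y$ are mutually independent. Hence $\mathbb E(a_0^{n_0})=m_{n_0}$, $\mathbb E(b_0^{n_0'})=m_{n_0'}$, and $\mathbb E([X^{n_0}Y^{n_0'}]_{2n-m})=\sum_j \mathbb E([X^{n_0}]_{n_0+2j})\,\mathbb E([Y^{n_0'}]_{2n-m-2j-n_0})$ by the convolution \eqref{eq:decompWm:2} and independence; each of these expectations equals $\alpha_\ell^{(n_0)}$, $\alpha_{n-m-j}^{(n_0')}$ respectively by \eqref{eq:formalphamk}. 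To pass from the symmetric triple sum with the $\binom{m}{n_0}$ factor to the clean form \eqref{eq:relomegaalpha:simple}, I would argue as in the paper: the nonzero contributions require $n_0\ge1$ whenever $j>0$ and $n_0'\ge1$ whenever $j<n-m$ (convention \eqref{convention}), and a combinatorial collapse of the $\binom{m}{n_0}$ against the binomials hidden in $\alpha^{(n_0)},\alpha^{(n_0')}$ — or, more directly, by instead expanding $W$ with $a_0A^{(1)}+b_0B^{(1)}$ treated as a single ``return'' term so that each summand of the geometric series contributes either an $A$-excursion or a $B$-excursion, yielding $\omega_n=\sum_j \mathbb E(a_{\cdot}^{?})\dots$; the bookkeeping that produces exactly $\sum_{j}\sum_\ell m_j \alpha_\ell^{(j)}\alpha_{n-j-\ell}^{(j+1)}$ is where care is needed. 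Formula \eqref{eq:relomegaalpha} is the less-collapsed version keeping the split $m=i+(j-i)$ between the $a_0$-power and the $b_0$-power, with $\binom{j}{i}$ the surviving multinomial coefficient; it follows from the same computation by \emph{not} performing the final simplification. The series identities \eqref{eq:relfgk:simple} and \eqref{eq:relfgk} are the generating-function forms of \eqref{eq:relomegaalpha:simple} and \eqref{eq:relomegaalpha}, obtained by multiplying by $z^{-(2n+1)}$, summing over $n$, and matching $\sum \alpha_\ell^{(j)}\alpha_{n-j-\ell}^{(j+1)} z^{-\dots}$ with $g_j(z)g_{j+1}(z)z^{-j}$ — here one must check the exponent bookkeeping ($g_j$ carries $z^{-(2\cdot+j)}$, $g_{j+1}$ carries $z^{-(2\cdot+j+1)}$, product starts at $z^{-(2j+1)}$, and the extra $z^{-j}$ from $m_j$'s slot in \eqref{eq:relgkgk}-style accounting is absent here because in \eqref{eq:relomegaalpha:simple} the ``$m_j$'' sits at the same level, not shifted — I would double-check against the $\omega_3,\omega_4$ data listed above).

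The main obstacle I anticipate is the purely combinatorial reconciliation between the ``raw'' expansion of $W_n$ (the quadruple sum \eqref{firstexpWm} with the $\binom{m}{k}$ factor) and the target forms \eqref{eq:relomegaalpha:simple}/\eqref{eq:relomegaalpha}: after taking expectations the binomial $\binom{m}{n_0}$ must be absorbed, and one has to verify that summing $\binom{m}{n_0}\binom{n_0+n_1-1}{n_0-1}\cdots$ over the compatible compositions reproduces exactly $\rho_2$ evaluated at the corresponding pair, i.e.\ that the expectation-level identity is literally \eqref{eq:formomega} rewritten via \eqref{eq:formalphamk}. Concretely I would: (i) write $\omega_n$ using \eqref{eq:formomega}; (ii) for a pair $(\ov p,\ov q)$ with $\ov p=(n_0,\dots,n_r)$, $\ov q=(n_0',\dots,n_s')$, split off the first entries $n_0,n_0'$ so that $\rho_2((\ov p,\ov q))=\binom{n_0+n_0'}{n_0}\binom{n_0+n_1-1}{n_0-1}\binom{n_0'+n_1'-1}{n_0'-1}\rho_1((n_1,\dots,n_r))\rho_1((n_1',\dots,n_s'))$ and $m(\ov p)m(\ov q)=m_{n_0}m_{n_0'}\cdot m(\text{tail }p)\,m(\text{tail }q)$; (iii) recognize, via \eqref{eq:formalphamk}, that $\sum_{\text{tails}}\binom{n_0+n_1-1}{n_0-1}\rho_1\, m(\text{tail}) = \alpha_\ell^{(n_0)}$ where $\ell$ is the size of the $p$-tail, and similarly for $q$; (iv) set $j=n_0+n_0'$, $i=n_0$ to land on \eqref{eq:relomegaalpha}, and then sum over $i$ to land on \eqref{eq:relomegaalpha:simple} — the last step using the Vandermonde-type identity $\sum_{i=0}^{j}\binom{j}{i}m_i m_{j-i}\,[\cdots]$... wait, this does \emph{not} simplify to $m_j$ in general, so the passage \eqref{eq:relomegaalpha}$\Rightarrow$\eqref{eq:relomegaalpha:simple} must instead go through a different grouping of the original geometric series (collapsing $a_0A^{(1)}+b_0B^{(1)}$ before expanding); I would present \eqref{eq:relomegaalpha:simple} via that route and \eqref{eq:relomegaalpha} via the symmetric $X,Y$ route, and note that their equality is then a nontrivial but mechanical consequence, or simply verify both against \eqref{eq:formomega}. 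Everything else is routine series manipulation guarded by the $\deg\to-\infty$ remark preceding the theorem.
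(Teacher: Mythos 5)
Your overall strategy coincides with the paper's: take expectations in the deterministic coefficient identities, exploiting the mutual independence of $a_{0}$, $b_{0}$, $X=A^{(1)}$, $Y=B^{(1)}$ and the distributional identity $\mathbb{E}([X^{k}]_{\ell})=\mathbb{E}([A^{k}]_{\ell})$. For \eqref{eq:relalphas} the paper applies the appendix formula \eqref{coeffpowerR} with $R=A$, $S=a_{0}X$ to get $[A^{k}]_{k+2n}=\sum_{j}\binom{j+k-1}{k-1}a_{0}^{j}[X^{j}]_{2n-j}$ and then takes expectations; your route through \eqref{rep:powersX} and \eqref{eq:formalphamk} (peeling off the first entry of the composition) is equivalent. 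For \eqref{eq:relomegaalpha} both you and the paper take expectations in \eqref{firstexpWm} and \eqref{eq:decompWm:2}; note that no absorption of the binomial $\binom{m}{n_{0}}$ is required --- it survives untouched as the $\binom{j}{i}$ in \eqref{eq:relomegaalpha} --- so the ``reconciliation'' you worry about at the end is not actually an issue for that formula.

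The one place your write-up leaves a hole is the passage to \eqref{eq:relomegaalpha:simple}. You correctly observe that $\sum_{i}\binom{j}{i}m_{i}m_{j-i}\alpha_{\ell}^{(i)}\alpha_{\cdot}^{(j-i)}$ does not collapse termwise with $j$ and $\ell$ fixed, but you then leave the derivation as ``bookkeeping where care is needed.'' The paper's resolution is a two-line interchange of summation: in \eqref{eq:relomegaalpha} fix $i$ and $\ell$ and sum over $j$ first; after the shift $j\mapsto j-i$ the inner sum $\sum_{j=i}^{n-\ell}\binom{j}{i}m_{j-i}\alpha_{n-j-\ell}^{(j-i)}$ is exactly the right-hand side of \eqref{eq:relalphas} with $k=i+1$ and $n$ replaced by $n-i-\ell$, hence equals $\alpha_{n-i-\ell}^{(i+1)}$. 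Your proposed alternative --- regroup the geometric series as $W=A/(1-b_{0}B^{(1)}A)=\sum_{j\geq 0}b_{0}^{j}(B^{(1)})^{j}A^{j+1}$ (legitimate in $\mathbb{C}((z^{-1}))$ since $\deg(b_{0}B^{(1)}A)=-2$) and take expectations factor by factor --- is also valid and yields \eqref{eq:relfgk:simple} directly; it is arguably more transparent than the paper's route. But as written you only gesture at it, and the fallback of ``verifying both against \eqref{eq:formomega}'' is not a proof. Everything else (the generating-function repackagings \eqref{eq:relgkgk}, \eqref{eq:relfgk:simple}, \eqref{eq:relfgk}, guarded by the $\deg\to-\infty$ remark) matches the paper, and your exponent bookkeeping for the product $g_{j}(z)g_{j+1}(z)$ is correct.
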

\begin{proof}
From the relations \eqref{eq:relAX} and \eqref{coeffpowerR}, where in the latter we take $R=A$ and $S=a_{0} X$, we obtain
\[
[A^{k}]_{k+2n}=\sum_{j=0}^{n}\binom{j+k-1}{k-1}\,a_{0}^{j}\,[X^{j}]_{2n-j}.
\]
Taking expectation, and using the facts that the random variables $a_{0}$ and $[X^{j}]_{\ell}$ are independent and $\mathbb{E}([X^{k}]_{k+2m})=\mathbb{E}([A^{k}]_{k+2m})$, we obtain \eqref{eq:relalphas}. The identity \eqref{eq:relomegaalpha} is obtained by taking expectation in \eqref{firstexpWm} and in \eqref{eq:decompWm:2}, and taking into account that the random variables $a_{0}$, $b_{0}$, $[X^{j}]_{\ell}$ and $[Y^{r}]_{s}$ are independent. To justify \eqref{eq:relomegaalpha:simple}, we write 
\begin{align*}
\omega_{n} & =\sum_{j=0}^{n}\sum_{i=0}^{j}\sum_{\ell=0}^{n-j}
\binom{j}{i}\,m_{i}\,m_{j-i}\,\alpha_{\ell}^{(i)}\,\alpha_{n-j-\ell}^{(j-i)}\\
&=\sum_{i=0}^{n}\sum_{\ell=0}^{n-i}\sum_{j=i}^{n-\ell}\binom{j}{i}\,m_{i}\,m_{j-i}\,\alpha_{\ell}^{(i)}\,\alpha_{n-j-\ell}^{(j-i)}\\
&=\sum_{i=0}^{n}\sum_{\ell=0}^{n-i}m_{i}\,\alpha_{\ell}^{(i)}\left(\sum_{j=i}^{n-\ell}\binom{j}{i}\,m_{j-i}\,\alpha_{n-j-\ell}^{(j-i)}\right)
\end{align*} 
and in virtue of \eqref{eq:relalphas}, the expression inside the parenthesis is exactly $\alpha_{n-i-\ell}^{(i+1)}$.

The identities \eqref{eq:relgkgk}--\eqref{eq:relfgk} follow immediately from \eqref{eq:relalphas}--\eqref{eq:relomegaalpha} and \eqref{def:seriesgk}--\eqref{def:seriesf}. In the case of \eqref{eq:relgkgk}, if we call $h_{k}(z)$ the right-hand side of \eqref{eq:relgkgk}, then it is clear that $\deg(h_{k})=-k$, and for every $n\geq 0$,
\[
[h_{k}]_{k+2n}=\sum_{j=0}^{\infty}\binom{j+k-1}{k-1}\,m_{j}\,\left[\frac{g_{j}(z)}{z^{j+k}}\right]_{k+2n},
\]
where, in view of \eqref{def:seriesgk},
\[
\left[\frac{g_{j}(z)}{z^{j+k}}\right]_{k+2n}=\left[\sum_{\ell=0}^{\infty}\frac{\alpha_{\ell}^{(j)}}{z^{2(j+\ell)+k}}\right]_{k+2n}=\begin{cases}
\alpha_{n-j}^{(j)}, & \mbox{if}\,\,j\leq n,\\
0, & \mbox{if}\,\,j>n.
\end{cases}
\]
Hence
\[
[h_{k}]_{k+2n}=\sum_{j=0}^{n}\binom{j+k-1}{k-1}\,m_{j}\,\alpha_{n-j}^{(j)}.
\]
On the other hand, for every $n\geq 0$,
\[
\left[\frac{g_{j}(z)}{z^{j+k}}\right]_{k+2n+1}=0,
\]
so $[h_{k}]_{k+2n+1}=0$. Consequently, in virtue of \eqref{def:seriesgk} and \eqref{eq:relalphas} we obtain the desired identity $h_{k}=g_{k}$.

The proofs of \eqref{eq:relfgk:simple} and \eqref{eq:relfgk} are done similarly, using \eqref{eq:relomegaalpha:simple} and \eqref{eq:relomegaalpha}. We leave them to the reader.
\end{proof}

\section{Trees and relations}\label{sec:trees}

In this section we describe combinatorial relations between the three sequences $(m_{n})_{n=0}^{\infty}$, $(\alpha_{n})_{n=0}^{\infty}$, and $(\omega_{n})_{n=0}^{\infty}$, complementing the formulas \eqref{eq:formalpha} and \eqref{eq:formomega}.

We first describe how the quantities $m_{n}$ and $\omega_{n}$ can be expressed in terms of the quantities $\alpha_{k}$, $k=0,\ldots,n$. The first few relations of this type are
\begin{align*}
m_{0} & = \alpha_{0}\\
m_{1} & =\alpha_{1}\\
m_{2} & =\alpha_{2}-\alpha_{1}^2\\
m_{3} & = \alpha_{3}-3\alpha_{2}\alpha_{1}+2\alpha_{1}^{3}\\
m_{4} & =\alpha_{4} - 4\alpha_{3}\alpha_{1} + 13\alpha_{2}\alpha_{1}^2 - 3\alpha_{2}^2 - 7\alpha_{1}^4\\
m_{5} & =\alpha_{5} - 5 \alpha_{4} \alpha_1 - 10 \alpha_3\alpha_2+23 \alpha_3 \alpha_1^2+34 \alpha_2^2 \alpha_1 - 79 \alpha_2 \alpha_1^3+36 \alpha_{1}^5\\
\omega_{0} & =\alpha_{0}\\
\omega_{1} & =2\alpha_{1}\\
\omega_{2} & =2\alpha_{2}+2\alpha_{1}^2\\
\omega_{3} & =2\alpha_{3}+6 \alpha_{2} \alpha_{1}-2\alpha_{1}^{3}\\
\omega_{4} & =2\alpha_{4}+8\alpha_{3} \alpha_{1}-14 \alpha_{2} \alpha_{1}^2+6 \alpha_{2}^2+6 \alpha_{1}^4\\
\omega_{5} & =2\alpha_{5}+10\alpha_{4} \alpha_{1}+20 \alpha_{3} \alpha_{2}-24 \alpha_{3} \alpha_{1}^2-42 \alpha_{2}^2 \alpha_{1}+72 \alpha_{2} \alpha_{1}^3-28 \alpha_{1}^{5}.
\end{align*}

Our formulas will use certain classes of planar trees. The first one of these classes is described next. Let $n\geq 1$ be an integer and let $\ov{n}=(n_{0},n_{1},\ldots,n_{r})\in \mathrm{C}(n)$ be fixed. We consider \emph{rooted leveled} trees associated with $\ov{n}$, defined by the following conditions:

\begin{itemize}
\item[T1)] Each vertex of the tree is represented by a positive integer, called the \emph{value} of the vertex. The tree has a root vertex with value $n$.

\item[T2)] The vertices of the tree are distributed in $d+1$ disjoint levels $\ell=0,\ldots,d$, $d\geq 0$, where level $0$ is formed solely by the root vertex, and level $d$ consists of the vertices with values $n_{0}, n_{1}, \ldots, n_{r}$, from left to right. The vertices at level $\ell$ are those at a distance $\ell$ from the root. For each $\ell=0,\ldots,d-1,$ we represent graphically the vertices at level $\ell$ above the vertices at level $\ell+1$.

\item[T3)] For each $\ell=0,\ldots,d-1,$ every vertex at level $\ell$ is a neighbor of at least one vertex at level $\ell+1$, and there exists at least one vertex at level $\ell$ that is a neighbor of at least two vertices at level $\ell+1$. The vertices at level $\ell+1$ that are neighbors of a vertex $v$ at level $\ell$ are called the \emph{direct descendants} of $v$, and $v$ is called the \emph{parent} of these vertices. For each $\ell=0,\ldots,d,$ the sum of the values of the vertices at level $\ell$ is $n$. If $v_{1},\ldots,v_{k}$ are the direct descendants of a vertex $v$, then the sum of the values of $v_{1},\ldots,v_{k}$ is the value of $v$.

\item[T4)] For each $\ell=0\,\ldots, d-1,$ if a vertex $v$ at level $\ell$ has only one direct descendant $v'$ at level $\ell+1$, then $v'$ has only one direct descendant as well, unless $v'$ is a vertex in the last level $d$ of the tree.
\end{itemize}

A tree satisfying the four properties $T1)$ -- $T4)$ is called an \emph{admissible} tree associated with $\ov{n}$, see Fig. \ref{Admtree}. The collection of all these trees is denoted $\mathcal{T}_{1}(\ov{n})$. We say that an admissible tree has \emph{height} $d$ if it has $d+1$ levels. If a vertex $v$ of an admissible tree has more than one direct descendant, we say that $v$ is a \emph{multi-branching} vertex.

\begin{figure}
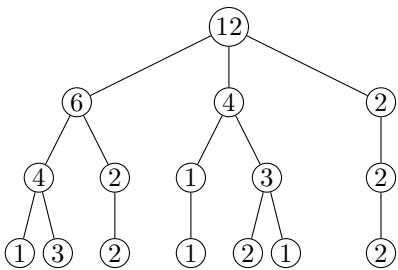

\begin{center}
\begin{tikz}
[level distance=10mm,
every node/.style={inner sep=1pt},
level 1/.style={sibling distance=20mm},
level 2/.style={sibling distance=10mm},
level 3/.style={sibling distance=5mm}]
\node[circle,draw]{$12$}
child {node[circle,draw]{$6$}
child {node[circle,draw]{$4$}
child {node[circle,draw]{$1$}}
child {node[circle,draw]{$3$}}
}
child {node[circle,draw]{$2$}
child {node[circle,draw]{$2$}}
}
}
child {node[circle,draw]{$4$}
child {node[circle,draw]{$1$}
child {node[circle,draw]{$1$}}
}
child {node[circle,draw] {$3$}
child {node[circle,draw]{$2$}}
child {node[circle,draw]{$1$}}
}
}
child {node[circle,draw]{$2$}
child {node[circle,draw]{$2$}
child {node[circle,draw]{$2$}
}
}
};
\end{tikz}
\end{center}
\caption{Example of an admissible tree with height $3$ and associated with $(1,3,2,1,2,1,2)\in\mathrm{C}(12)$.}
\label{Admtree}
\end{figure}

We define now a weight for each admissible tree. First, for any vertex $v$ of an admissible tree, let
\begin{equation}\label{def:kappaweight}
\kappa_{1}(v):=\begin{cases}
-\rho_{1}((\lambda_1,\ldots,\lambda_{s})) & \parbox[t]{.55\textwidth}{if $v$ is multi-branching, and $\lambda_{1},\ldots,\lambda_{s},$ $s\geq 2$, are the values of the direct descendants of $v$, from left to right,}\\[1.5em]
1 & \mbox{otherwise},
\end{cases}
\end{equation}
recall that $\rho_{1}((\lambda_1,\ldots,\lambda_{s}))=\prod_{j=1}^{s-1}\binom{\lambda_{j}+\lambda_{j+1}-1}{\lambda_{j}-1}$. Then, for an admissible tree $t$ we define
\begin{equation}\label{def:weighttree}
w_{1}(t):=\prod_{v}\kappa_{1}(v)
\end{equation}
where the product is taken over all vertices of $t$.

In analogy to \eqref{def:mfunc}, we define the following expressions for each composition $\ov{n}\in\mathrm{C}(n)$, $n\geq 0$:
\begin{align}
\alpha(\ov{n}) & :=\begin{cases}\prod_{j=0}^{r} \alpha_{n_{j}} & \qquad \mbox{if}\,\,\ov{n}=(n_{0},\ldots,n_{r}), r\geq 0,\\
1 & \qquad \mbox{if}\,\,\ov{n}=e,
\end{cases}\label{def:alphafunc} \\
\omega(\ov{n}) & :=\begin{cases}\prod_{j=0}^{r} \omega_{n_{j}} & \qquad \mbox{if}\,\,\ov{n}=(n_{0},\ldots,n_{r}), r\geq 0,\\
1 & \qquad \mbox{if}\,\,\ov{n}=e.
\end{cases}\label{def:omegafunc}
\end{align}

Before we state our first result in this section, we want to define an operation on admissible trees called \emph{extension}. Let $t$ be an admissible tree. We say that a tree $s$ is an extension of $t$ by $k$ units if $s$ is obtained by appending to each vertex $v$ in the last level of $t$ a vertical line tree with $l$ edges and vertices with the same value as the vertex $v$, see Fig.~\ref{Exampleext}. If $l=0$, we understand $s=t$. However, if $l\geq 1$, the new tree $s$ is not an admissible tree (it does not satisfy property T3)).

\begin{figure}
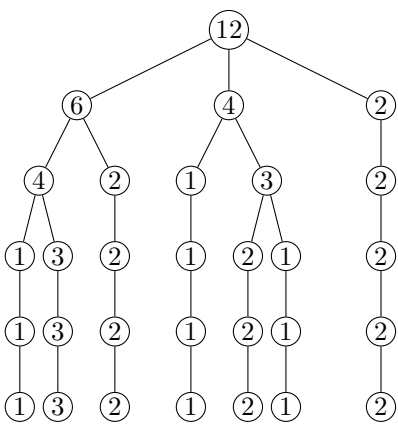

\begin{center}
\begin{tikz}
[level distance=10mm,
every node/.style={inner sep=1pt},
level 1/.style={sibling distance=20mm},
level 2/.style={sibling distance=10mm},
level 3/.style={sibling distance=5mm}]
\node[circle,draw]{$12$}
child {node[circle,draw]{$6$}
child {node[circle,draw]{$4$}
child {node[circle,draw]{$1$}
child {node[circle,draw]{$1$}
child {node[circle,draw]{$1$}}}}
child {node[circle,draw]{$3$}
child {node[circle,draw]{$3$}
child {node[circle,draw]{$3$}}
}
}
}
child {node[circle,draw]{$2$}
child {node[circle,draw]{$2$}
child {node[circle,draw]{$2$}
child {node[circle,draw]{$2$}
}
}
}
}
}
child {node[circle,draw]{$4$}
child {node[circle,draw]{$1$}
child {node[circle,draw]{$1$}
child {node[circle,draw]{$1$}
child {node[circle,draw]{$1$}
}
}
}
}
child {node[circle,draw] {$3$}
child {node[circle,draw]{$2$}
child {node[circle,draw]{$2$}
child {node[circle,draw]{$2$}
}
}
}
child {node[circle,draw]{$1$}
child {node[circle,draw]{$1$}
child {node[circle,draw]{$1$}
}
}
}
}
}
child {node[circle,draw]{$2$}
child {node[circle,draw]{$2$}
child {node[circle,draw]{$2$}
child {node[circle,draw]{$2$}
child {node[circle,draw]{$2$}
}
}
}
}
};
\end{tikz}
\end{center}
\caption{The tree shown is an extension of the tree in Fig. \ref{Admtree} by two units.}
\label{Exampleext}
\end{figure}

\begin{theorem}\label{theo:combmalpha}
For each integer $n\geq 1$,
\begin{equation}\label{formula:inv:1}
m_{n}=\sum_{\ov{n}\in \mathrm{C}(n)} \phi_{1}(\ov{n})\,\alpha(\ov{n}),
\end{equation}
where
\begin{equation}\label{def:funcphi}
\phi_{1}(\ov{n}):=\sum_{t\in\mathcal{T}_{1}(\ov{n})}w_{1}(t).
\end{equation}
Moreover, for each $n\geq 2$ we have
\begin{equation}\label{eq:sum:inv:1}
\sum_{\ov{n}\in \mathrm{C}(n)}\phi_{1}(\ov{n})=0.
\end{equation}
See \eqref{def:alphafunc} and \eqref{def:weighttree} for the meaning of notation.
\end{theorem}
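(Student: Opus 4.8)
The plan is to invert the relation \eqref{eq:formalpha}, which expresses $\alpha_n$ as a sum over compositions of $n$ with weights $\rho_1$ and monomials $m(\ov n)$, by iterating the obvious first step $m_n = \alpha_n - \sum_{\ov n \in \mathrm C(n),\, \ov n \neq (n)} \rho_1(\ov n)\, m(\ov n)$ and tracking the bookkeeping with the tree classes $\mathcal T_1(\ov n)$. Concretely, from \eqref{eq:formalpha} we isolate $m_n$: since $\rho_1((n)) = 1$, we get $m_n = \alpha_n - \sum_{r\geq 1}\sum_{(n_0,\dots,n_r)\in\mathrm C(n)}\rho_1((n_0,\dots,n_r))\, m_{n_0}\cdots m_{n_r}$. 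Each factor $m_{n_j}$ with $n_j < n$ may again be replaced, recursively, by its own expansion; a factor $m_{n_j}$ with $n_j = n$ cannot occur on the right since $r\geq 1$ forces every $n_j < n$, so the recursion terminates. The resulting fully-expanded expression is a signed sum of products of $\alpha$'s, and the combinatorial content of the proof is to show that the bookkeeping of \emph{which} product of $\alpha$'s arises, and with what sign and multiplicity, is exactly encoded by the trees in $\mathcal T_1(\ov n)$ with weight $w_1$.

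**Next I would** set up the correspondence between a run of the recursion and an admissible tree. Starting from the root labelled $n$, the first application of the identity creates level-$1$ vertices with values $(n_0,\dots,n_r)$, contributing the sign $-1$ and the factor $\rho_1((n_0,\dots,n_r))$ — this is precisely $\kappa_1$ of a multi-branching root as in \eqref{def:kappaweight}. Each subsequent substitution at a vertex $v$ of value $n_j$ either leaves $v$ alone (terminating that branch — $v$ becomes a leaf with $\kappa_1(v)=1$) or expands it into children $(\lambda_1,\dots,\lambda_s)$, $s\geq 2$, contributing $-\rho_1((\lambda_1,\dots,\lambda_s))$. Property T3) — every non-leaf level contains at least one multi-branching vertex — reflects the fact that at each stage of the recursion we perform at least one genuine (non-trivial) substitution; and property T4) — a vertex with a single child propagates single children down to the last level — reflects that once a branch is ``frozen'' (we choose not to expand $m_{n_j}$ further at some stage), it stays frozen, so the value is simply copied down. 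Conversely, the ``extension'' operation of Fig.~\ref{Exampleext} shows that a tree of height $d$ can be padded out with vertical copies; the point is that the leaf-value multiset $(n_0,\dots,n_r) = \ov n$ is an invariant, and summing $w_1(t)$ over all admissible $t$ with that leaf profile gives the coefficient $\phi_1(\ov n)$ of $\alpha(\ov n) = \alpha_{n_0}\cdots\alpha_{n_r}$ in $m_n$. One must check that the map from recursion-runs to admissible trees is a bijection (up to the height-padding identification) and that it preserves the sign-weighted count; this is essentially a careful induction on $n$ using \eqref{eq:formalpha}.

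**For the second assertion** \eqref{eq:sum:inv:1}, the cleanest route is a substitution argument: set all moments equal, $m_k = c$ for all $k\geq 1$ (and $m_0 = 1$), or more robustly substitute $\alpha_k = 1$ for all $k\geq 0$ into \eqref{formula:inv:1}. Under $\alpha_k \equiv 1$ the right side of \eqref{formula:inv:1} becomes $\sum_{\ov n\in\mathrm C(n)}\phi_1(\ov n)$, so it suffices to show the left side $m_n$ vanishes for $n\geq 2$. The $\alpha$'s are all $1$ precisely when $A_n \equiv 1$ as polynomials, i.e. when the series $A(z) = \sum A_n z^{-2n-1} = \sum z^{-2n-1} = \frac{1}{z}\cdot\frac{1}{1-z^{-2}} = \frac{z}{z^2-1}$. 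But by \eqref{eq:relAA}, $A(z) = 1/(z - a_0 A^{(1)}(z))$, and if $A$ and all its shifts equal $\frac{z}{z^2-1}$ then $a_0 = 1$; more directly, $\alpha_k\equiv 1$ forces via \eqref{eq:formalpha} that $m_k = 0$ for $k\geq 2$ (the case $n=2$ gives $1 = \alpha_2 = m_2 + m_1^2 = m_2 + 1$, so $m_2 = 0$, and one induces upward), whence $m_n = 0$ for all $n\geq 2$ and \eqref{eq:sum:inv:1} follows. Alternatively, and perhaps more in the spirit of the paper, \eqref{eq:sum:inv:1} follows by evaluating the generating-function identity $g_1(z) = f$-type relation at the specialization making all $\alpha_n = 1$; I would present whichever is shorter after checking the details.

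**The main obstacle** I anticipate is verifying that the combinatorial bijection in the second paragraph is \emph{exact} — in particular that properties T3) and T4), together with the height-padding/extension identification, precisely cut out the set of trees appearing with nonzero net contribution, with no overcounting and no spurious cancellation among distinct trees of the same leaf profile. The signs $(-1)^{\#\text{multi-branching vertices}}$ must be shown to track the parity of the number of substitution steps correctly through the recursion, and one has to be careful that ``frozen'' branches (single-child chains) are not accidentally identified with genuinely different trees. A clean induction on $n$, peeling off the top level of the tree and matching it to the first step of the recursion, should resolve this, but the bookkeeping is delicate.
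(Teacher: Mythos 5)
Your proposal is correct and follows essentially the same route as the paper: invert \eqref{eq:formalpha} to get $m_{k}=\alpha_{k}-\sum_{\ov{n}\in\mathrm{C}(k)\setminus\{(k)\}}\rho_{1}(\ov{n})\,m(\ov{n})$, expand recursively by induction on $n$, and encode the bookkeeping via a bijection between expansion data and admissible trees (the paper makes this explicit as the map $T_{1}$ built from the extension operation, which is exactly the height-padding identification you anticipate as the delicate point), and prove \eqref{eq:sum:inv:1} by the same specialization $\alpha_{k}\equiv 1\implies m_{k}=0$ for $k\geq 2$.
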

\begin{proof}
The proof is by induction on $n$. The result holds trivially in the case $n=1$ since $\mathrm{C}(1)=\{(1)\}$, $\phi_{1}((1))=1$ and $m_{1}=\alpha_{1}$.

Suppose that \eqref{formula:inv:1} is valid for all values $n=1,\ldots,k-1,$ $k\geq 2$, and let us show that it is also valid for $n=k$. First, we deduce from \eqref{eq:formalpha} that
\begin{equation}\label{eq:iteration}
m_{k}=\alpha_{k}-\sum_{\ov{n}\in\mathrm{C}_{*}(k)} \rho_{1}(\ov{n})\,m(\ov{n}),
\end{equation}
where
\begin{equation}\label{def:Cstar}
\mathrm{C}_{*}(k):=\mathrm{C}(k)\setminus\{(k)\}.
\end{equation}
Now we rewrite \eqref{eq:iteration} in a convenient way.

Fix a particular $\ov{n}=(n_{0},\ldots,n_{r})\in\mathrm{C}_{*}(k)$. Then $m(\ov{n})=m_{n_0}\cdots m_{n_{r}}$ and we can apply the induction hypothesis to each $m_{n_{j}}$ since $1\leq n_{j}\leq k-1$ for all $0\leq j\leq r$. So
\[
m_{n_{j}}=\sum_{\ov{p}_{j}\in\mathrm{C}(n_{j})} \phi_{1}(\ov{p}_{j})\,\alpha(\ov{p}_{j}),\qquad 0\leq j\leq r,
\]
and therefore
\begin{equation}\label{eq:decomp1}
m(\ov{n})=\prod_{j=0}^{r}\,\,\sum_{\ov{p}_{j}\in\mathrm{C}(n_{j})}\phi_{1}(\ov{p}_{j})\,\alpha(\ov{p}_{j})=\sum_{(\ov{p}_{0},\ldots,\ov{p}_{r})}\prod_{j=0}^{r}\phi_{1}(\ov{p}_{j})\,\alpha(\ov{p}_{j}),
\end{equation}
where the last summation runs over all tuples $(\ov{p}_{0},\ldots,\ov{p}_{r})\in \mathrm{C}(n_{0})\times\cdots\times \mathrm{C}(n_{r})$. Applying \eqref{def:funcphi} we get
\begin{equation}\label{eq:decomp2}
\prod_{j=0}^{r}\phi_{1}(\ov{p}_{j})\,\alpha(\ov{p}_{j})=\prod_{j=0}^{r}\left(\sum_{t_{j}\in\mathcal{T}_{1}(\ov{p}_{j})}w_{1}(t_{j})\right)\,\alpha(\ov{p}_{j})
=\sum_{(t_{0},\ldots,t_{r})}\prod_{j=0}^{r} w_{1}(t_{j})\,\alpha(\ov{p}_{j}),
\end{equation}
where the last summation runs over all tuples $(t_{0},\ldots,t_{r})\in \mathcal{T}_{1}(\ov{p}_{0})\times \cdots \times \mathcal{T}_{1}(\ov{p}_{r})$.

From \eqref{eq:iteration}, \eqref{eq:decomp1} and \eqref{eq:decomp2} we deduce that
\begin{equation}\label{eq:mn:summation}
m_{k}=\alpha_{k}-\sum_{(n_{0},\ldots,n_{r})\in\mathrm{C}_{*}(k)} \sum_{(\ov{p}_{0},\ldots,\ov{p}_{r})}\sum_{(t_{0},\ldots,t_{r})}\rho_{1}((n_{0},\ldots,n_{r}))\,\prod_{j=0}^{r} w_{1}(t_{j})\,\alpha(\ov{p}_{j}).
\end{equation}
The summation in \eqref{eq:mn:summation} has one term for each choice of $(n_{0},\ldots,n_{r})\in\mathrm{C}_{*}(k)$ and corresponding choices of $(\ov{p}_{0},\ldots,\ov{p}_{r})\in \mathrm{C}(n_{0})\times\cdots\times \mathrm{C}(n_{r})$ and $(t_{0},\ldots,t_{r})\in \mathcal{T}_{1}(\ov{p}_{0})\times \cdots \times \mathcal{T}_{1}(\ov{p}_{r})$. The idea now is to show that for each term in this summation we can make the identification
\begin{equation}\label{eq:rel:weights}
-\rho_{1}((n_{0},\ldots,n_{r}))\,\prod_{j=0}^{r}w_{1}(t_{j})\,\alpha(\ov{p}_{j})=w_{1}(t)\,\alpha(\ov{p})
\end{equation}
for a certain admissible tree $t$ associated with a vector $\ov{p}\in\mathrm{C}_{*}(k)$ (the tree and the vector depends of course on the particular term).

So let us define two sets and a map between these sets to describe this identification. The first set $\mathcal{S}_{1}$ consists of all tuples $(\ov{n}, \ov{\pi}, \ov{t})$ satisfying the conditions
\begin{align*}
\ov{n} & =(n_{0},\ldots,n_{r})\in\mathrm{C}_{*}(k),\\
\ov{\pi} & =(\ov{p}_{0},\ldots,\ov{p}_{r})\in\mathrm{C}(n_{0})\times\cdots\times \mathrm{C}(n_{r}),\\
\ov{t} & =(t_{0},\ldots,t_{r})\in\mathcal{T}_{1}(\ov{p}_{0})\times\cdots\times \mathcal{T}_{1}(\ov{p}_{r}).
\end{align*}
The second set $\mathcal{T}_{1}$ is the collection of all admissible trees associated with a vector $\ov{p}\in\mathrm{C}_{*}(k)$. Let $T_{1}:\mathcal{S}_{1}\longrightarrow\mathcal{T}_{1}$ be the map that assigns to each $(\ov{n}, \ov{\pi}, \ov{t})\in\mathcal{S}_{1}$ the tree $t\in\mathcal{T}_{1}$ constructed using the following multi-step procedure:
\begin{itemize}
\item[1)] Construct the admissible tree $s$ with two levels (levels $0$ and $1$) associated with the vector $\ov{n}=(n_{0},\ldots,n_{r})$.
\item[2)] Let $d_{j}$ be the height of the admissible tree $t_{j}$ in $\ov{t}=(t_{0},\ldots,t_{r})$, and let $d:=\max_{0\leq j\leq r} d_{j}$. For each $j=0,\ldots,r,$ construct a new tree $\widehat{t}_{j}$ by performing an extension of $t_{j}$ with $d-d_{j}$ units. Note that at least one $t_{j}$ remains unchanged after performing the extensions.
\item[3)] For each $j=0,\ldots,r,$ append the tree $\widehat{t}_{j}$ to the tree $s$ by using the vertex in level $1$ of $s$ with value $n_{j}$ as the root vertex of the tree $\widehat{t}_{j}$. Let $t$ be the resulting tree after completing this process.
\end{itemize}

It is evident that $t$ is an admissible tree associated with a vector $\ov{p}\in\mathrm{C}_{*}(k)$. Moreover, from \eqref{def:kappaweight} and \eqref{def:weighttree} we easily deduce that \eqref{eq:rel:weights} indeed holds. Also, the reader can easily check that the map $T_{1}$ is a one-to-one and onto. Therefore, from \eqref{eq:mn:summation} and \eqref{eq:rel:weights} we deduce the desired identity \eqref{formula:inv:1} for $n=k$.

Finally, we prove \eqref{eq:sum:inv:1}. If we formally set $\alpha_{n}=1$ for all $n\geq 0$, then \eqref{formula:inv:1} transforms into $m_{n}=\sum_{\ov{n}\in\mathrm{C}(n)}\phi_{1}(\ov{n})$ for all $n\geq 1$. Hence \eqref{eq:sum:inv:1} will be justified if we show that
\begin{equation}\label{eq:partalpham}
\alpha_{n}=1 \,\,\mbox{for all}\,\,n\geq 0 \,\,\implies m_{n}=0\,\,\mbox{for all}\,\,n\geq 2.
\end{equation}
We prove this implication by induction on $n$. First, assuming the hypothesis we get $m_{1}=\alpha_{1}=1$ and $m_{2}=\alpha_{2}-\alpha_{1}^{2}=0$. Let $k\geq 3$ and assume that $m_{n}=0$ for all $2\leq n\leq k-1$. Then applying \eqref{eq:iteration} we get
\[
m_{k}=\alpha_{k}-\sum_{\ov{n}\in\mathrm{C}_{*}(k)}\rho_{1}(\ov{n})\,m(\ov{n})=\alpha_{k}-\rho_{1}((1,\ldots,1))\,m_{1}^{k}=1-1=0,
\]
since the only non-zero term in the summation is the one corresponding to the constant vector $\ov{n}=(1,\ldots,1)$.
\end{proof}

In order to state our next result, we need to define a second class of trees. Let $n\geq 1$ be an integer and let $\ov{n}=(n_{0},\ldots,n_{r})\in \mathrm{C}(n)$ be fixed. We say that a tree $t$ belongs to the class $\mathcal{T}_{2}(\ov{n})$, if in addition to $T1)$--$T4)$, $t$ satisfies the following \emph{edge coloring} conditions:

\begin{itemize}
\item[$A1)$] Every edge of the tree is either a \textit{blue edge} ($b$-edge) or a \textit{red edge} ($r$-edge).

\item[$A2)$] If $t$ does not reduce to a single vertex, $v$ is the root vertex of $t$, and $v_{1},\ldots,v_{s},$ $s\geq 2$, are the direct descendants of $v$, listed from left to right, then there exists $1\leq j\leq s-1$ such that the edges connecting $v$ with $v_{1},\ldots,v_{j}$ are all $b$-edges, and the edges connecting $v$ with $v_{j+1},\ldots,v_{s}$ are all $r$-edges.

\item[$A3)$] If $v$ is a vertex in $t$ different from the root vertex, then the edge connecting $v$ with its parent has the same color as the edges connecting $v$ with its direct descendants, in case $v$ has any.\end{itemize}

Note that among the edges connecting the root vertex with its direct descendants, there are at least one $b$-edge and and at least one $r$-edge. Let $t\in\mathcal{T}_{2}(\overline{n})$. For any vertex $v$ of $t$, we define
\begin{equation}\label{def:kappa2}
\kappa_{2}(v):=\begin{cases} 2 & \mbox{if $v$ is the only vertex of $t$,}\\\smallskip
\rho_{2}((\lambda_{1},\ldots,\lambda_{j}),(\lambda_{j+1},\ldots,\lambda_{s})) & \parbox[t]{.45\textwidth}{if $v$ is the root vertex of $t$, $v$ is multi-branching, $\lambda_{1},\ldots,\lambda_{s}$ are the values of the direct descendants $v_{1},\ldots,v_{s}$ of $v$, respectively, and $j$ is as in $A2)$,}\\\smallskip
\kappa_{1}(v) & \mbox{if $v$ is not the root vertex of $t$}.
\end{cases}
\end{equation}
We also define
\begin{equation}\label{def:weighttree:2}
w_{2}(t):=\prod_{v}\kappa_{2}(v),
\end{equation}
the product taken over all vertices of $t$.

\begin{theorem}
For each integer $n\geq 1$,
\begin{equation}\label{eq:comb:alphaomega}
\omega_{n}=\sum_{\ov{n}\in\mathrm{C}(n)}\phi_{2}(\ov{n})\,\alpha(\ov{n}),
\end{equation}
where
\begin{equation}\label{def:func:phi2}
\phi_{2}(\ov{n}):=\sum_{t\in\mathcal{T}_{2}(\ov{n})}w_{2}(t).
\end{equation}
Moreover, for each $n\geq 1$,
\begin{equation}\label{eq:sum:phi2}
\sum_{\ov{n}\in\mathrm{C}(n)}\phi_{2}(\ov{n})=2n.
\end{equation}
\end{theorem}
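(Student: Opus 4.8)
The plan is to obtain \eqref{eq:comb:alphaomega} by feeding the already-proven identity of Theorem~\ref{theo:combmalpha}, which writes each $m_{k}$ as a combination of the $\alpha$'s indexed by trees in $\mathcal{T}_{1}$, into the expansion \eqref{eq:formomega} of $\omega_{n}$, and then repackaging the outcome as a sum over the colored trees $\mathcal{T}_{2}$. First I would split the sum \eqref{eq:formomega} according to whether $\ov{p}$ or $\ov{q}$ equals the empty composition $e$. For $n\geq 1$ these cases are disjoint, and by \eqref{def:genrho} and \eqref{eq:formalpha} the terms with $\ov{q}=e$ contribute $\sum_{\ov{p}\in\mathrm{C}(n)}\rho_{1}(\ov{p})\,m(\ov{p})=\alpha_{n}$, and symmetrically the terms with $\ov{p}=e$ contribute $\alpha_{n}$; since the single vertex is the only tree in $\mathcal{T}_{2}((n))$ and carries $\kappa_{2}=2$, their total $2\alpha_{n}$ equals $\phi_{2}((n))\,\alpha((n))$, the $\ov{n}=(n)$ term on the right of \eqref{eq:comb:alphaomega}. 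What remains is to match the sum over the pairs $(\ov{p},\ov{q})$ with $\ov{p}=(n_{0},\ldots,n_{r})$ and $\ov{q}=(n_{0}',\ldots,n_{s}')$ both nonempty, for which $\rho_{2}((\ov{p},\ov{q}))=\binom{n_{0}+n_{0}'}{n_{0}}\rho_{1}(\ov{p})\rho_{1}(\ov{q})$, against $\sum_{\ov{n}\in\mathrm{C}_{*}(n)}\phi_{2}(\ov{n})\,\alpha(\ov{n})$.

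For that part I would expand $m(\ov{p})=\prod_{i}m_{n_{i}}$ and $m(\ov{q})=\prod_{i}m_{n_{i}'}$ by Theorem~\ref{theo:combmalpha} (legitimate since every entry of a composition is $\geq 1$), producing a sum indexed by $\ov{p},\ov{q}$, by sub-compositions $\ov{p}_{i}\in\mathrm{C}(n_{i})$, $\ov{q}_{i}\in\mathrm{C}(n_{i}')$, and by admissible trees $t_{i}\in\mathcal{T}_{1}(\ov{p}_{i})$, $t_{i}'\in\mathcal{T}_{1}(\ov{q}_{i})$. To each such datum I attach a colored tree exactly in the spirit of the map $T_{1}$ in the proof of Theorem~\ref{theo:combmalpha}: start from a root of value $n$; hang below it the $r+1$ vertices $n_{0},\ldots,n_{r}$ joined by $b$-edges, followed by the $s+1$ vertices $n_{0}',\ldots,n_{s}'$ joined by $r$-edges; let $d$ be the largest of the heights of all the $t_{i},t_{i}'$, replace each $t_{i}$ (resp.\ $t_{i}'$) by its extension $\wh{t}_{i}$ (resp.\ $\wh{t}_{i}'$) up to height $d$, and graft $\wh{t}_{i}$ below the vertex $n_{i}$ with all its edges colored blue (resp.\ $\wh{t}_{i}'$ below $n_{i}'$ with all its edges colored red). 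Conditions $T1)$--$T4)$ are verified just as in Theorem~\ref{theo:combmalpha}; $A1)$--$A3)$ hold because each grafted block is monochromatic and meets the root in an edge of its own color; and the output lies in $\mathcal{T}_{2}(\ov{m})$ with $\ov{m}=\ov{p}_{0}\cdots\ov{p}_{r}\,\ov{q}_{0}\cdots\ov{q}_{s}\in\mathrm{C}_{*}(n)$ (its length is $\geq r+s+2\geq 2$). Since $\kappa_{2}$ agrees with $\kappa_{1}$ off the root and $\kappa_{1}$ is unchanged by extension, while $\kappa_{2}$ at the root is $\rho_{2}((n_{0},\ldots,n_{r}),(n_{0}',\ldots,n_{s}'))$, one gets $w_{2}(t)=\binom{n_{0}+n_{0}'}{n_{0}}\rho_{1}(\ov{p})\rho_{1}(\ov{q})\prod_{i}w_{1}(t_{i})\prod_{i}w_{1}(t_{i}')$ and $\alpha(\ov{m})=\prod_{i}\alpha(\ov{p}_{i})\prod_{i}\alpha(\ov{q}_{i})$, so each summand is reproduced exactly. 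This assignment is a bijection onto $\bigcup_{\ov{m}\in\mathrm{C}_{*}(n)}\mathcal{T}_{2}(\ov{m})$: its inverse removes the root, reads the first-generation blue and red blocks as $\ov{p}$ and $\ov{q}$, and --- using $T4)$ to see that the multi-branching levels of each monochromatic block form an initial segment and the common last level of the colored tree to see that at least one block is unextended --- decomposes each block uniquely as the extension of an admissible tree. Summing over all data yields \eqref{eq:comb:alphaomega}.

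To prove \eqref{eq:sum:phi2} I would specialize \eqref{eq:comb:alphaomega} by formally putting $\alpha_{k}=1$ for every $k$, which turns it into $\omega_{n}=\sum_{\ov{n}\in\mathrm{C}(n)}\phi_{2}(\ov{n})$, and then evaluate the left-hand side under the same specialization. By the implication \eqref{eq:partalpham} established inside Theorem~\ref{theo:combmalpha}, $\alpha_{k}\equiv 1$ forces $m_{0}=m_{1}=1$ and $m_{k}=0$ for $k\geq 2$, so in \eqref{eq:formomega} the only surviving pairs $(\ov{p},\ov{q})$ are those in which each of $\ov{p},\ov{q}$ is either $e$ or a string of $1$'s. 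Writing $\ov{p}=(\underbrace{1,\ldots,1}_{a})$ and $\ov{q}=(\underbrace{1,\ldots,1}_{b})$ with $a+b=n$, and using that $\rho_{1}$ equals $1$ on strings of $1$'s and on $e$, one has $\rho_{2}((\ov{p},\ov{q}))=\binom{2}{1}=2$ when $a,b\geq 1$ and $\rho_{2}=1$ when $a=0$ or $b=0$, hence $\omega_{n}=2(n-1)+1+1=2n$, which is \eqref{eq:sum:phi2}.

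The step I expect to be the main obstacle is the verification in the second paragraph that the colored-tree assembly is well defined and bijective --- in particular that the extension mechanism correctly normalizes the heights of the grafted blocks, that the edge-coloring conditions $A1)$--$A3)$ are exactly captured by the blue/red split coming from $\ov{p}$ and $\ov{q}$, and that the $w_{2}$- and $\alpha$-bookkeeping come out precisely as stated. Everything else is either a direct application of Theorem~\ref{theo:combmalpha} or the short arithmetic of the last paragraph; the genuinely new content is re-running the $T_{1}$-construction with edge colors adjoined and the root weight $\rho_{1}$ upgraded to $\rho_{2}$.
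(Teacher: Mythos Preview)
Your proposal is correct and follows essentially the same approach as the paper: expand $\omega_{n}$ via \eqref{eq:formomega}, substitute Theorem~\ref{theo:combmalpha} for each $m_{k}$, and assemble the data into colored trees of $\mathcal{T}_{2}$ by a $T_{1}$-style extension-and-grafting construction, with the same specialization argument for \eqref{eq:sum:phi2}. Your treatment of the boundary cases is in fact slightly cleaner than the paper's: you recognize directly from \eqref{eq:formalpha} that the $\ov{q}=e$ and $\ov{p}=e$ contributions each equal $\alpha_{n}$, whereas the paper first expands these terms through $\mathcal{T}_{1}$ and then observes a cancellation $S_{2,n}+S_{3,n}=0$ to reach the same conclusion $\omega_{n}=2\alpha_{n}+S_{4,n}$.
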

\begin{proof}
According to \eqref{eq:formomega}, for any $n\geq 1$,
\begin{equation}\label{eq:omm}
\omega_{n}=\sum_{(\ov{p},\ov{q})\in\widehat{\mathrm{C}}(n)}\rho_{2}(\ov{p},\ov{q})\,m(\ov{p})\,m(\ov{q})
\end{equation}
Let $\ell(\ov{p})$ denote the number of components of $\ov{p}$, and $\ov{p}(j)$ the $j$-th component of $\ov{p}$. Then applying \eqref{formula:inv:1} we obtain
\[
m(\ov{p})=\prod_{j=1}^{\ell(\ov{p})} m_{\ov{p}(j)}=\sum_{(\ov{n}_{1},\ldots,\ov{n}_{\ell(\ov{p})})}\prod_{j=1}^{\ell(\ov{p})}\phi_{1}(\ov{n}_{j})\,\alpha(\ov{n}_{j})
\]
where the summation runs over all tuples $(\ov{n}_{1},\ldots,\ov{n}_{\ell(\ov{p})})\in\mathrm{C}(\ov{p}(1))\times\cdots\times \mathrm{C}(\ov{p}(\ell(\ov{p})))$. Inserting in the above identity the relations \eqref{def:funcphi}, we get
\begin{equation}\label{eq:mw1alpha}
m(\ov{p})=\sum_{(\ov{n}_{1},\ldots,\ov{n}_{\ell(\ov{p})})}\sum_{(t_{1},\ldots,t_{\ell(\ov{p})})}\prod_{j=1}^{\ell(\ov{p})}
w_{1}(t_{j}) \,\alpha(\ov{n}_{j}),
\end{equation}
where the inner summation is over all the tuples $(t_{1},\ldots,t_{\ell(\ov{p})})\in\mathcal{T}_{1}(\ov{n}_{1})\times\cdots\times\mathcal{T}_{1}(\ov{n}_{\ell(\ov{p})})$. Hence, \eqref{eq:mw1alpha} and the analogous expression for $m(\ov{q})$ applied to \eqref{eq:omm} give
\begin{equation}\label{eq:omegaanalysis}
\omega_{n}=\sum_{(\ov{p},\ov{q})\in\widehat{\mathrm{C}}(n)}
\sum_{\ov{\pi}}\sum_{\ov{\tau}}\sum_{\ov{\pi}'}\sum_{\ov{\tau}'}\rho_{2}(\ov{p},\ov{q})\left(\prod_{j=1}^{\ell(\ov{p})}
w_{1}(t_{j}) \,\alpha(\ov{n}_{j})\right)\left(\prod_{i=1}^{\ell(\ov{q})}
w_{1}(t_{i}') \,\alpha(\ov{n}_{i}')\right),
\end{equation}
where we used the abbreviations $\ov{\pi}=(\ov{n}_{1},\ldots,\ov{n}_{\ell(\ov{p})})$, $\ov{\tau}=(t_{1},\ldots,t_{\ell(\ov{p})})$, $\ov{\pi}'=(\ov{n}_{1}',\ldots,\ov{n}_{\ell(\ov{q})}')$, $\ov{\tau}'=(t_{1}',\ldots,t_{\ell(\ov{q})}')$.

In \eqref{eq:omegaanalysis} we shall distinguish different groups of terms. The first group is formed by only two terms; one is obtained by taking $(\ov{p},\ov{q})=((n),e)$, $\ov{n}_{1}=(n)$ and $t_{1}$ the tree whose only vertex has value $n$, and the other term is obtained by taking $(\ov{p},\ov{q})=(e,(n))$, $\ov{n}_{1}'=(n)$ and $t_{1}'$ as $t_{1}$ before. Both terms give the same contribution and we call this first group
\[
S_{1,n}:=2\alpha_{n}.
\]
The second group we distinguish is formed by all terms in \eqref{eq:omegaanalysis} obtained by taking $(\ov{p},\ov{q})=((n),e)$, $\ov{n}_{1}\in\mathrm{C}_{*}(n)$, cf. \eqref{def:Cstar}. The sum of all these terms is
\[
S_{2,n}:=\sum_{\ov{n}\in\mathrm{C}_{*}(n)}\sum_{t\in\mathcal{T}_{1}(\ov{n})}w_{1}(t)\,\alpha(\ov{n}).
\]
By symmetry, it is clear that we obtain the same expression if we take $(\ov{p},\ov{q})=(e,(n))$, $\ov{n}_{1}'\in\mathrm{C}_{*}(n)$. The third group that we distinguish is formed by all terms in \eqref{eq:omegaanalysis} obtained by taking $\ov{p}\in\mathrm{C}_{*}(n)$, $\ov{q}=e$, which we denote
\[
S_{3,n}:=\sum_{\ov{p}\in\mathrm{C}_{*}(n)}\sum_{(\ov{n}_{1},\ldots,\ov{n}_{\ell(\ov{p})})}\sum_{(t_{1},\ldots,t_{\ell(\ov{p})})}\rho_{1}(\ov{p})\prod_{j=1}^{\ell(\ov{p})}
w_{1}(t_{j}) \,\alpha(\ov{n}_{j}).
\]
We get the same expression if we take $\ov{p}=e$, $\ov{q}\in\mathrm{C}_{*}(n)$. Finally, the last group that we consider is formed by all the terms in \eqref{eq:omegaanalysis} obtained by taking
\[
(\ov{p},\ov{q})\in\bigcup_{j=1}^{n-1} \mathrm{C}(j)\times\mathrm{C}(n-j),
\]
i.e., $(\ov{p},\ov{q})\in\widehat{\mathrm{C}}(n)$ with $\ov{p}\neq e$, $\ov{q}\neq e$. We call $S_{4,n}$ the sum of all the terms in this last group.

Hence, we have
\[
\omega_{n}=S_{1,n}+2 S_{2,n}+2 S_{3,n}+S_{4,n}.
\]
But $S_{3,n}=-S_{2,n}$, since in virtue of \eqref{eq:mn:summation}, \eqref{formula:inv:1} and \eqref{def:funcphi},
\[
S_{3,n}=\alpha_{n}-m_{n}=\alpha_{n}-\sum_{\ov{n}\in\mathrm{C}(n)}\sum_{t\in\mathcal{T}_{1}(\ov{n})} w_{1}(t)\,\alpha(\ov{n})=-S_{2,n}.
\]
We conclude that
\begin{equation}\label{eq:reductionomega}
\omega_{n}=2\alpha_{n}+S_{4,n}.
\end{equation}

To finish the proof of \eqref{eq:comb:alphaomega}, we will show that there is a bijective correspondence between terms in $2\alpha_{n}+S_{4,n}$ and trees in the classes $\mathcal{T}_{2}(\ov{n})$, $\ov{n}\in\mathrm{C}(n)$. This correspondence can be constructed in a way similar to the construction of the map $T_{1}$ in the proof of \eqref{formula:inv:1}. We describe this construction below.

First, observe that
\begin{equation}\label{eq:alphaphi2}
2\alpha_{n}=w_{2}(t_{0})\,\alpha((n))=\phi_{2}((n))\,\alpha((n)),
\end{equation}
where $t_{0}$ represents the tree with only one vertex with value $n$. Now we focus on the terms that appear in $S_{4,n}$. These terms are parametrized by the elements in the set $\mathcal{S}_{2}$ consisting of all the tuples $((\ov{p},\ov{q}), \ov{\pi},\ov{\tau}, \ov{\pi}',\ov{\tau}')$ satisfying
\[
\begin{aligned}
(\ov{p},\ov{q}) & \in\bigcup_{j=1}^{n-1} \mathrm{C}(j)\times\mathrm{C}(n-j)\\
\ov{\pi} & = (\ov{n}_{1},\ldots,\ov{n}_{\ell(\ov{p})})\in\mathrm{C}(\ov{p}(1))\times\cdots\times \mathrm{C}(\ov{p}(\ell(\ov{p})))\\
\ov{\tau} & = (t_{1},\ldots,t_{\ell(\ov{p})})\in\mathcal{T}_{1}(\ov{n}_{1})\times\cdots\times\mathcal{T}_{1}(\ov{n}_{\ell(\ov{p})})\\
\ov{\pi}' & = (\ov{n}_{1}',\ldots,\ov{n}_{\ell(\ov{q})}')\in\mathrm{C}(\ov{q}(1))\times\cdots\times \mathrm{C}(\ov{q}(\ell(\ov{q})))\\
\ov{\tau}' & =(t_{1}',\ldots,t_{\ell(\ov{q})}')\in\mathcal{T}_{1}(\ov{n}_{1}')\times\cdots\times\mathcal{T}_{1}(\ov{n}_{\ell(\ov{q})}')
\end{aligned}
\]
see \eqref{eq:omegaanalysis}. Let $\mathcal{T}_{2}$ be the collection of all trees belonging to the classes $\mathcal{T}_{2}(\ov{n})$, $\ov{n}\in\mathrm{C}_{*}(n)$. Let $T_{2}:\mathcal{S}_{2}\longrightarrow\mathcal{T}_{2}$ be the map that assigns to each element $((\ov{p},\ov{q}), \ov{\pi},\ov{\tau}, \ov{\pi}',\ov{\tau}')$ the tree $t$ constructed using the following multi-step procedure:
\begin{itemize}
\item[1)] Construct the admissible tree with two levels (levels $0$ and $1$), where level $1$ is formed, from left to right, by the vertices with values $\ov{p}(1),\ldots,\ov{p}(\ell(\ov{p})), \ov{q}(1),\ldots,\ov{q}(\ell(\ov{q}))$. Color blue all the edges connecting the root with the vertices with values $\ov{p}(1),\ldots,\ov{p}(\ell(\ov{p}))$, and color red the remaining edges.

\item[2)]  Let $d_{j}$ denote the height of the tree $t_{j}$, $1\leq j\leq \ell(\ov{p})$, and $d_{l}'$ the height of the tree $t_{l}'$, $1\leq l\leq \ell(\ov{q})$. If $d$ is the maximum of all the values $d_{j}$ and $d_{l}'$, construct new trees $\widehat{t}_{j}$ (and $\widehat{t}_{l}'$) by performing extensions of the trees $t_{j}$ (respectively, $t_{l}'$) with $d-d_{j}$ units (respectively, $d-d_{l}'$ units). Color blue all the edges of the trees
$\widehat{t}_{j}$, $1\leq j\leq \ell(\ov{p})$, and color red all the edges of the trees $\widehat{t}_{l}'$, $1\leq l\leq \ell(\ov{q})$.

\item[3)] For each $j=1,\ldots,\ell(\ov{p}),$ append the tree $\widehat{t}_{j}$ to the tree constructed in step $1)$ by using the vertex with value $\ov{p}(j)$ as the root vertex of the tree $\widehat{t}_{j}$, and do the same with the trees $\widehat{t}_{l}'$ and the vertices with values $\ov{q}(l)$. Let $t$ be the tree obtained after completing this step.
\end{itemize}

 Since $\ov{p}\neq e$, $\ov{q}\neq e$, the tree $t$ is indeed a tree in the class $\mathcal{T}_{2}(\ov{n})$, for some $\ov{n}\in\mathrm{C}_{*}(n)$. The entries of $\ov{n}$ are obtained by concatenation of the entries of $\ov{n}_{1},\ldots,\ov{n}_{\ell(\ov{p})}, \ov{n}_{1}',\ldots,\ov{n}_{\ell(\ov{q})}',$ in this order from left to right. It is also clear from the construction of $t$ and the definitions \eqref{def:kappa2}--\eqref{def:weighttree:2} that
\begin{equation}\label{eq:finalrel}
\rho_{2}(\ov{p},\ov{q})\left(\prod_{j=1}^{\ell(\ov{p})}
w_{1}(t_{j}) \,\alpha(\ov{n}_{j})\right)\left(\prod_{i=1}^{\ell(\ov{q})}
w_{1}(t_{i}') \,\alpha(\ov{n}_{i}')\right)=w_{2}(t)\,\alpha(\ov{n}).
\end{equation}
Moreover, it is easy to see that the map $T_{2}$ is a bijection. Hence, from \eqref{eq:reductionomega}, \eqref{eq:alphaphi2} and \eqref{eq:finalrel} we deduce that
\[
\omega_{n}=2\alpha_{n}+\sum_{\ov{n}\in\mathrm{C}_{*}(n)}\sum_{t\in\mathcal{T}_{2}(\ov{n})}w_{2}(t)\,\alpha(\ov{n}),
\]
which is the identity \eqref{eq:comb:alphaomega}.

Now we justify \eqref{eq:sum:phi2}. We proceed as in the proof of \eqref{eq:sum:inv:1}, and we formally set $\alpha_{n}=1$ for all $n\geq 0$. The goal is to show that in this case $\omega_{n}=\sum_{\ov{n}\in\mathrm{C}(n)}\phi_{2}(\ov{n})=2n$. According to \eqref{eq:partalpham}, in this situation we have $m_{1}=1$ and $m_{k}=0$ for all $k\geq 2$, hence in virtue of \eqref{eq:omm} we get
\[
\omega_{n}=\sum_{(\ov{p},\ov{q})\in\widehat{\mathrm{C}}(n)}\rho_{2}(\ov{p},\ov{q})\,m(\ov{p})\,m(\ov{q})
=\sum_{j=0}^{n}\rho_{2}(\mathbf{1}_{j}, \mathbf{1}_{n-j})=2n,
\]
where for $j>0$, $\mathbf{1}_{j}$ denotes the vector $(1,\ldots,1)\in\mathrm{C}(j)$ with all its $j$ entries equal to $1$, and $\mathbf{1}_{0}=e\in\mathrm{C}(0)$.
\end{proof}

In this section we also analyze the relations that express the quantities $m_{n}$ and $\alpha_{n}$ in terms of the quantities $\omega_{k}$, $k=0,\ldots,n$. The first few relations between these sequences take the form
\begin{align*}
m_{1} & =\frac{1}{2} \omega_{1}\\
m_{2} & =\frac{1}{2} \omega_{2} - \frac{1}{2} \omega_{1}^2\\
m_{3} & =\frac{1}{2} \omega_{3} - \frac{3}{2} \omega_2 \omega_1 + \frac{9}{8} \omega_1^3\\
m_{4} & =\frac{1}{2} \omega_4 - 2 \omega_3 \omega_1 + 7 \omega_2 \omega_1^2 - \frac{3}{2} \omega_2^2 - \frac{17}{4} \omega_{1}^4\\
m_{5} & =\frac{1}{2} \omega_{5}-\frac{5}{2} \omega_4\omega_1-5 \omega_3\omega_2+\frac{95}{8} \omega_3\omega_1^2+\frac{145}{8} \omega_2^2 \omega_1-45 \omega_2\omega_1^3+\frac{365}{16} \omega_1^5\\
\alpha_{1} & =\frac{1}{2}\omega_{1}\\
\alpha_{2} & =\frac{1}{2} \omega_{2}-\frac{1}{4}\omega_1^2\\
\alpha_{3} & =\frac{1}{2} \omega_{3}-\frac{3}{4}\omega_2\omega_1+\frac{1}{2}\omega_{1}^3\\
\alpha_{4} & =\frac{1}{2} \omega_{4}-\omega_{3}\omega_1-\frac{3}{4}\omega_2^2+\frac{25}{8} \omega_2\omega_1^2-\frac{29}{16} \omega_{1}^4\\
\alpha_{5} & =\frac{1}{2} \omega_{5}-\frac{5}{4} \omega_4 \omega_1-\frac{5}{2} \omega_3\omega_2+\frac{21}{4} \omega_3 \omega_1^2+\frac{33}{4} \omega_2^2\omega_1-\frac{309}{16}\omega_2\omega_1^3+\frac{19}{2}\omega_1^5.
\end{align*}

We need to introduce two more classes of bi-colored trees. The first one is defined as follows. For $n\geq 1$ and $\ov{n}=(n_{0},\ldots,n_{r})\in\mathrm{C}(n)$, we say that a tree $t$ belongs to the class $\mathcal{T}_{3}(\ov{n})$, if in addition to $T1)$--$T4)$, $t$ satisfies the following conditions:

\begin{itemize}
\item[$B1)$] Every edge of the tree is either a \emph{blue edge} ($b$-edge) or a \emph{red edge} ($r$-edge).

\item[$B2)$] If a vertex $v$ is multi-branching and $v_{1}, v_{2},\ldots,v_{s},$ $s\geq 2$, are the direct descendants of $v$, listed from left to right, then there exists $0\leq j\leq s$ such that the edges connecting $v$ with $v_{1},\ldots,v_{j}$ are all $b$-edges, and the edges connecting $v$ with $v_{j+1},\ldots,v_{s}$ are all $r$-edges. If $j=0$, then we understand that all the edges are $r$-edges, and if $j=s$, then all the edges are $b$-edges.

\item[$B3)$] If a vertex $v$ has only one direct descendant, then the edge connecting $v$ with its parent and the edge connecting $v$ with its direct descendant have the same color.
\end{itemize}

Note that these rules uniquely determine the color of each edge. For any vertex $v$ of a tree $t\in\mathcal{T}_{3}(\ov{n})$, we define
\begin{equation}\label{def:kappa3}
\kappa_{3}(v):=\begin{cases}
\frac{1}{2} & \parbox[t]{.4\textwidth}{if $v$ is the only vertex of the tree, or if $v$ is not multi-branching and is a direct descendant of a multi-branching vertex,}\\[0.9cm]
-\frac{1}{2}\,\rho_{2}((\lambda_{1},\ldots,\lambda_{j}),(\lambda_{j+1},\ldots,\lambda_{s})) &
\parbox[t]{.4\textwidth}{if $v$ is multi-branching, $\lambda_{1},\ldots,\lambda_{s}$, $s\geq 2$, are the values of the direct descendants $v_{1},\ldots,v_{s}$ of $v$, respectively, and $j$ is as in $B2)$,}\\[1.2cm]
1 & \mbox{otherwise}.
\end{cases}
\end{equation}
In this definition, we understand $(\lambda_{1},\ldots,\lambda_{j})=e$ if $j=0$, and $(\lambda_{j+1},\ldots,\lambda_{s})=e$ if $j=s$. The third case in \eqref{def:kappa3} refers to a vertex that is not multi-branching and is not a direct descendant of a multi-branching vertex.

Finally, for a tree $t\in\mathcal{T}_{3}(\ov{n})$ we define
\begin{equation}\label{def:weighttree:3}
w_{3}(t):=\prod_{v}\kappa_{3}(v),
\end{equation}
where the product is taken over all vertices of $t$.

\begin{theorem}\label{theo:invrelmomega}
For each integer $n\geq 1$,
\begin{equation}\label{formula:inv:2}
m_{n}=\sum_{\ov{n}\in\mathrm{C}(n)}\phi_{3}(\ov{n})\,\omega(\ov{n}),
\end{equation}
where
\begin{equation}\label{def:funcphi3}
\phi_{3}(\ov{n}):=\sum_{t\in\mathcal{T}_{3}(\ov{n})}w_{3}(t).
\end{equation}
\end{theorem}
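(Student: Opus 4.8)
The plan is to mirror the strategy used in the proof of Theorem~\ref{theo:combmalpha}: set up an induction on $n$ together with a bijection between certain parametrized tuples and the trees in $\mathcal{T}_{3}(\ov{n})$. The starting point is the analytic inversion of \eqref{eq:formomega}. Since $\omega_{n}=2m_{n}+(\text{terms involving }m_{j}\text{ with }j<n\text{, times products of }m\text{'s})$, solving for $m_{n}$ gives
\[
m_{n}=\tfrac{1}{2}\,\omega_{n}-\tfrac{1}{2}\!\!\sum_{\substack{(\ov{p},\ov{q})\in\widehat{\mathrm{C}}(n)\\(\ov{p},\ov{q})\neq((n),e),\,(e,(n))}}\!\!\rho_{2}(\ov{p},\ov{q})\,m(\ov{p})\,m(\ov{q}),
\]
which is the analogue of \eqref{eq:iteration} here. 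The base case $n=1$ holds because $\mathcal{T}_{3}((1))$ has the single one-vertex tree with weight $\tfrac12$, so $m_1=\tfrac12\omega_1$.

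Next I would expand each factor $m(\ov p)$ and $m(\ov q)$ in the sum above using the induction hypothesis \eqref{formula:inv:2}: each $m_{\ov p(j)}$ becomes $\sum_{\ov n_j\in\mathrm{C}(\ov p(j))}\phi_3(\ov n_j)\,\omega(\ov n_j)=\sum_{\ov n_j}\sum_{t_j\in\mathcal{T}_3(\ov n_j)} w_3(t_j)\,\omega(\ov n_j)$, exactly as in \eqref{eq:decomp1}--\eqref{eq:decomp2}. This produces a large multiple sum whose generic term is indexed by a tuple $((\ov p,\ov q),\ov\pi,\ov\tau,\ov\pi',\ov\tau')$ (of the same shape as the set $\mathcal S_2$ in the previous proof), carrying the scalar
\[
-\tfrac12\,\rho_2(\ov p,\ov q)\,\Bigl(\prod_{j} w_3(t_j)\Bigr)\Bigl(\prod_{i} w_3(t_i')\Bigr)
\]
times a product of $\alpha$'s — no, times a product of $\omega$'s, i.e.\ $\omega(\ov n)$ with $\ov n$ the concatenation of the $\ov n_j$ and $\ov n_i'$. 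I would then define the assembly map: build the two-level tree whose level-$1$ vertices are $\ov p(1),\dots,\ov p(\ell(\ov p)),\ov q(1),\dots,\ov q(\ell(\ov q))$, color the edges to the $\ov p$-block blue and the $\ov q$-block red (this is where rule $B2)$ with $0\le j\le s$ comes from, allowing an all-red or all-blue split at the root), extend each $t_j,t_i'$ to a common height as in step~2) of the $T_2$ construction, append them, and propagate colors downward per $B3)$. The factor $-\tfrac12$ attached to the root then matches the second case of $\kappa_3$, the isolated $\tfrac12$'s match the first case (one-vertex tree, or a non-multi-branching direct descendant of the root — which appears precisely when a $t_j$ is a single vertex), and the remaining $\kappa_1$-type factors inside each appended subtree match the third ("otherwise") case. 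The separate term $\tfrac12\omega_n$ corresponds to the one-vertex tree in $\mathcal{T}_3((n))$ with weight $\tfrac12$.

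The main obstacle is verifying that this assembly map is a genuine bijection onto $\bigcup_{\ov n\in\mathrm{C}(n)}\mathcal{T}_3(\ov n)$ and that the weight bookkeeping is exact — in particular that the coloring rules $B1)$--$B3)$ are both forced and consistent with what the construction produces, and that the placement of the $\tfrac12$ factors (first case of $\kappa_3$) is neither double-counted nor omitted. The subtle point, compared to the proof of \eqref{eq:comb:alphaomega}, is that here \emph{every} vertex adjacent to the root on the "simple" side can contribute a $\tfrac12$, and one must check that a non-multi-branching direct descendant of a multi-branching vertex is exactly the configuration arising when one of the inductively supplied trees $t_j$ (or $t_i'$) has height $0$; this is what forces the extension step to be performed before appending, so that such a vertex sits directly below the root rather than deeper. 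Once the bijection and the identity
\[
-\tfrac12\,\rho_2(\ov p,\ov q)\Bigl(\prod_{j} w_3(t_j)\Bigr)\Bigl(\prod_{i} w_3(t_i')\Bigr)\,\omega(\ov n)=w_3(t)\,\omega(\ov n)
\]
are established, summing over all terms and adding back $\tfrac12\omega_n$ yields \eqref{formula:inv:2} for $n$, completing the induction. I do not expect an analogue of the auxiliary identities \eqref{eq:sum:inv:1} or \eqref{eq:sum:phi2} to be needed, since the statement here is just the inversion formula itself.
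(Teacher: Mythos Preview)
Your proposal is correct and follows essentially the same approach as the paper's proof: induction on $n$, the inversion $m_{n}=\tfrac{1}{2}\omega_{n}-\tfrac{1}{2}\sum_{(\ov p,\ov q)\in\widehat{\mathrm C}_{*}(n)}\rho_{2}(\ov p,\ov q)\,m(\ov p)\,m(\ov q)$ drawn from \eqref{eq:formomega}, expansion of each $m$-factor via the inductive hypothesis, and the same three-step assembly map (two-level colored tree, extension to common height with the coloring rules you describe, then appending) yielding a bijection onto $\bigcup_{\ov n\in\mathrm C_{*}(n)}\mathcal T_{3}(\ov n)$ together with the weight identity \eqref{keyrel}. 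Your observation that no analogue of \eqref{eq:sum:inv:1} or \eqref{eq:sum:phi2} is needed here is also in line with the paper.
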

\begin{proof}
The argument used here is the same used in the proof of \eqref{formula:inv:1}, but for the sake of clarity in the exposition we reproduce it.

First, the result is trivially true for $n=1$, since $\phi_{3}((1))=1/2$ and $m_{1}=\omega_{1}/2$. Assume that \eqref{formula:inv:2} holds for all values $n=1,\ldots,k-1,$ $k\geq 2,$ and let us prove that it also holds for $n=k$.

It follows from \eqref{eq:formomega} that
\begin{equation}\label{eq:mk:int}
m_{k}=\frac{\omega_{k}}{2}-\frac{1}{2}\sum_{(\ov{p},\ov{q})\in\widehat{\mathrm{C}}_{*}(k)}\rho_{2}(\ov{p},\ov{q})\, m(\ov{p})\, m(\ov{q}),
\end{equation}
where
\[
\widehat{\mathrm{C}}_{*}(k):=\widehat{\mathrm{C}}(k)\setminus\{((k),e), (e,(k))\}.
\]
Given a pair $(\ov{p},\ov{q})\in\widehat{\mathrm{C}}_{*}(k)$, if we apply the induction hypothesis to each factor in $m(\ov{p})$ and $m(\ov{q})$, we obtain
\begin{align}
m(\ov{p}) & =\prod_{j=1}^{\ell(\ov{p})} m_{\ov{p}(j)}=\sum_{(\ov{n}_{1},\ldots,\ov{n}_{\ell(\ov{p})})} \prod_{j=1}^{\ell(\ov{p})}\phi_{3}(\ov{n}_{j})\,\omega(\ov{n}_{j})=\sum_{(\ov{n}_{1},\ldots,\ov{n}_{\ell(\ov{p})})}\sum_{(t_{1},\ldots,t_{\ell(\ov{p})})}\prod_{j=1}^{\ell(\ov{p})}w_{3}(t_{j})\,\omega(\ov{n}_{j}),\label{eq:mp}\\
m(\ov{q}) & =\prod_{i=1}^{\ell(\ov{q})} m_{\ov{q}(i)}=\sum_{(\ov{n}_{1}',\ldots,\ov{n}_{\ell(\ov{q})}')}\prod_{i=1}^{\ell(\ov{q})}\phi_{3}(\ov{n}_{i}')\,\omega(\ov{n}_{i}')=\sum_{(\ov{n}_{1}',\ldots,\ov{n}_{\ell(\ov{q})}')}\sum_{(t_{1}',\ldots,t_{\ell(\ov{q})}')}\prod_{i=1}^{\ell(\ov{q})}w_{3}(t_{i}')\,\omega(\ov{n}_{i}'),\label{eq:mq}
\end{align}
where the summations in \eqref{eq:mp} are taken over $(\ov{n}_{1},\ldots,\ov{n}_{\ell(\ov{p})})\in\mathrm{C}(\ov{p}(1))\times\cdots\times \mathrm{C}(\ov{p}(\ell(\ov{p})))$, $(t_{1},\ldots,t_{\ell(\ov{p})})\in\mathcal{T}_{3}(\ov{n}_{1})\times \cdots \times\mathcal{T}_{3}(\ov{n}_{\ell(\ov{p})})$, and the summations in \eqref{eq:mq} are taken over $(\ov{n}_{1}',\ldots,\ov{n}_{\ell(\ov{q})}')\in\mathrm{C}(\ov{q}(1))\times\cdots\times \mathrm{C}(\ov{q}(\ell(\ov{q})))$, $(t_{1}',\ldots,t_{\ell(\ov{q})}')\in\mathcal{T}_{3}(\ov{n}_{1}')\times \cdots \times \mathcal{T}_{3}(\ov{n}_{\ell(\ov{q})}')$.

From \eqref{eq:mk:int} and \eqref{eq:mp}--\eqref{eq:mq} we obtain
\begin{equation}\label{eq:mkexp}
m_{k}=\frac{\omega_{k}}{2}-\frac{1}{2}\sum_{(\ov{p},\ov{q})\in\widehat{\mathrm{C}}_{*}(k)}\sum_{\ov{\pi}}\sum_{\ov{\tau}}\sum_{\ov{\pi}'}\sum_{\ov{\tau}'}\rho_{2}(\ov{p},\ov{q})\left(\prod_{j=1}^{\ell(\ov{p})}w_{3}(t_{j})\,\omega(\ov{n}_{j})\right)\left(\prod_{i=1}^{\ell(\ov{q})}w_{3}(t_{i}')\,\omega(\ov{n}_{i}')\right)
\end{equation}
where we have the same abbreviations used in \eqref{eq:omegaanalysis}.

One can show, as in the proof of \eqref{eq:finalrel}, that for each term in the summation in \eqref{eq:mkexp} there exists a unique tree $t\in\mathcal{T}_{3}(\ov{n})$ associated with a vector $\ov{n}\in\mathrm{C}_{*}(k)$ such that
\begin{equation}\label{keyrel}
-\frac{1}{2}\,\rho_{2}(\ov{p},\ov{q})\left(\prod_{j=1}^{\ell(\ov{p})}w_{3}(t_{j})\,\omega(\ov{n}_{j})\right)\left(\prod_{i=1}^{\ell(\ov{q})}w_{3}(t_{i}')\,\omega(\ov{n}_{i}')\right)=w_{3}(t)\,\omega(\ov{n}).
\end{equation}
To make the argument explicit, let us consider the set $\mathcal{S}_{3}$ consisting of all the tuples $((\ov{p},\ov{q}),\ov{\pi},\ov{\tau},\ov{\pi}',\ov{\tau}')$ where
\begin{equation}\label{descStilde}
\begin{aligned}
(\ov{p},\ov{q}) & \in \widehat{\mathrm{C}}_{*}(k)\\
\ov{\pi} & = (\ov{n}_{1},\ldots,\ov{n}_{\ell(\ov{p})})\in\mathrm{C}(\ov{p}(1))\times\cdots\times \mathrm{C}(\ov{p}(\ell(\ov{p})))\\
\ov{\tau} & = (t_{1},\ldots,t_{\ell(\ov{p})})\in\mathcal{T}_{3}(\ov{n}_{1})\times\cdots\times\mathcal{T}_{3}(\ov{n}_{\ell(\ov{p})})\\
\ov{\pi}' & = (\ov{n}_{1}',\ldots,\ov{n}_{\ell(\ov{q})}')\in\mathrm{C}(\ov{q}(1))\times\cdots\times \mathrm{C}(\ov{q}(\ell(\ov{q})))\\
\ov{\tau}' & =(t_{1}',\ldots,t_{\ell(\ov{q})}')\in\mathcal{T}_{3}(\ov{n}_{1}')\times\cdots\times\mathcal{T}_{3}(\ov{n}_{\ell(\ov{q})}')
\end{aligned}
\end{equation}
and let $\mathcal{T}_{3}$ be the collection of all trees in the classes $\mathcal{T}_{3}(\ov{n})$, $\ov{n}\in\mathrm{C}_{*}(k)$. If $\ov{p}=e$ (or $\ov{q}=e$) in \eqref{descStilde}, then we understand that the corresponding elements in $\mathcal{S}_{3}$ are of the form $((e,\ov{q}),\ov{\pi}',\ov{\tau}')$ (respectively $((\ov{p},e),\ov{\pi},\ov{\tau})$). Now consider the map $T_{3}:\mathcal{S}_{3}\longrightarrow\mathcal{T}_{3}$ that assigns to each element $((\ov{p},\ov{q}),\ov{\pi},\ov{\tau},\ov{\pi}',\ov{\tau}')$ the tree $t$ constructed using the following procedure:
\begin{itemize}
\item[1)] Construct the admissible tree with two levels (levels $0$ and $1$), where level $1$ is formed, from left to right, by the vertices with values $\ov{p}(1),\ldots,\ov{p}(\ell(\ov{p})), \ov{q}(1),\ldots,\ov{q}(\ell(\ov{q}))$. Color blue the edges connecting the root with the vertices with values $\ov{p}(1),\ldots,\ov{p}(\ell(\ov{p}))$, and color red the rest. If $\ov{p}=e$ (or $\ov{q}=e$), then all edges in this tree are red (respectively blue). Note that this tree contains at least two edges.

\item[2)]  Let $d_{j}$ denote the height of the tree $t_{j}$, $1\leq j\leq \ell(\ov{p})$, and $d_{i}'$ the height of the tree $t_{i}'$, $1\leq i\leq \ell(\ov{q})$. If $d$ is the maximum of all the values $d_{j}$ and $d_{i}'$, construct new trees $\widehat{t}_{j}$ (and $\widehat{t}'_{i}$) by performing an extension of the trees $t_{j}$ (respectively $t_{i}'$) with $d-d_{j}$ units (respectively $d-d_{i}'$ units). The coloring of the new edges that are added in the extensions follows the following rules. Assume that $d-d_{j}>0$ (otherwise $t_{j}=\widehat{t}_{j}$ and there are no additional edges). If the tree $t_{j}$ has height $d_{j}>0$, then each new edge added to form $\widehat{t}_{j}$ is colored so that the rule $B3)$ holds for all vertices in $\widehat{t}_{j}$ that are not multi-branching. If $t_{j}$ has height $d_{j}=0$, then each new edge added to $t_{j}$ is colored blue. The same rules are valid for the trees $\widehat{t}_{i}'$, except that red is used instead of blue in the case that $t_{i}'$ has height zero.

\item[3)] For each $j=1,\ldots,\ell(\ov{p}),$ append the tree $\widehat{t}_{j}$ to the tree constructed in step $1)$ by using the vertex with value $\ov{p}(j)$ as the root vertex of the tree $\widehat{t}_{j}$, and do the same with the trees $\widehat{t}_{i}'$ and the vertices with values $\ov{q}(i)$. Let $t$ be the tree obtained after completing this step.
\end{itemize}

From this construction we easily deduce that $t$ is a tree in the class $\mathcal{T}_{3}(\ov{n})$ for some $\ov{n}\in\mathrm{C}_{*}(k)$, and it is clear that \eqref{keyrel} holds. Moreover, the map $T_{3}:\mathcal{S}_{3}\longrightarrow\mathcal{T}_{3}$ is a bijection and therefore we can write
\[
m_{k}=\frac{\omega_{k}}{2}+\sum_{\ov{n}\in\mathrm{C}_{*}(k)}\sum_{t\in\mathcal{T}_{3}(\ov{n})}w_{3}(t)\,\omega(\ov{n}),
\]
which is the desired identity.
\end{proof}

For our last result in this section, we introduce the following class of trees. Given $n\geq 1$ and $\ov{n}=(n_{0},\ldots,n_{r})\in\mathrm{C}(n)$, we say that a tree $t$ belongs to the class $\mathcal{T}_{4}(\ov{n})$, if in addition to $T1)$--$T4)$, $t$ satisfies the following conditions:

\begin{itemize}
\item[$C1)$] Every edge of the tree is either a \emph{blue edge} ($b$-edge) or a \emph{red edge} ($r$-edge).

\item[$C2)$] If $t$ does not reduce to a single vertex and $v$ is the root vertex of $t$, then condition $A2)$ holds for this vertex. That is, if $v_{1},\ldots,v_{s},$ $s\geq 2$, are the direct descendants of $v$, listed from left to right, then there exists $1\leq j\leq s-1$ such that the edges connecting $v$ with $v_{1},\ldots,v_{j}$ are all $b$-edges, and the edges connecting $v$ with $v_{j+1},\ldots,v_{s}$ are all $r$-edges.

\item[$C3)$] If $v$ is a vertex in $t$ different from the root vertex, then condition $B2)$ holds for $v$ if $v$ is multi-branching, and condition $B3)$ holds for $v$ if $v$ is not multi-branching.
\end{itemize}

These rules uniquely determine the color of each edge. Also note that among the edges connecting the root vertex with its direct descendants, there are at least one $b$-edge and at least one $r$-edge. For any vertex $v$ of a tree $t\in\mathcal{T}_{4}(\ov{n})$, let
\[
\kappa_{4}(v):=\begin{cases}
\frac{1}{2} & \parbox[t]{.55\textwidth}{if $v$ is the only vertex of $t$,}\\ \smallskip
-\frac{1}{2}\,\rho_{2}((\lambda_{1},\ldots,\lambda_{j}),(\lambda_{j+1},\ldots,\lambda_{s})) &
\parbox[t]{.4\textwidth}{if $v$ is the root vertex of $t$, $v$ is multi-branching, $\lambda_{1},\ldots,\lambda_{s}$ are the values of the direct descendants $v_{1},\ldots,v_{s}$ of $v$, respectively, and $j$ is as in $C2)$,}\\ \smallskip
\kappa_{3}(v) & \mbox{if $v$ is not the root vertex of $t$}.
\end{cases}
\]

We define
\[
w_{4}(t):=\prod_{v}\kappa_{4}(v),
\]
the product taken over all vertices of $t$.

\begin{theorem}\label{theo:combalphaomega}
For each $n\geq 1$,
\begin{equation}\label{eq:alphafomega}
\alpha_{n}=\sum_{\ov{n}\in\mathrm{C}(n)}\phi_{4}(\ov{n})\,\omega(\ov{n}),
\end{equation}
where
\begin{equation}\label{def:phi4}
\phi_{4}(\ov{n}):=\sum_{t\in\mathcal{T}_{4}(\ov{n})}w_{4}(t).
\end{equation}
\end{theorem}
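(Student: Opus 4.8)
The plan is to derive \eqref{eq:alphafomega} directly from the already established identity \eqref{formula:inv:2} of Theorem~\ref{theo:invrelmomega}, mimicking the substitution-and-bijection scheme used in the proofs of Theorems~\ref{theo:combmalpha} and~\ref{theo:invrelmomega}; no induction is needed here, since the right-hand side of \eqref{eq:alphafomega} involves the sequence $(\omega_{n})$ rather than $(\alpha_{n})$. The first step is to produce the analogue of \eqref{eq:mk:int}. Splitting the sum in \eqref{eq:formomega} according to whether $\ov{q}=e$, $\ov{p}=e$, or neither, and using $\rho_{2}(\ov{p},e)=\rho_{1}(\ov{p})$, $\rho_{2}(e,\ov{q})=\rho_{1}(\ov{q})$ and $m(e)=1$, the group with $\ov{q}=e$ contributes $\sum_{\ov{p}\in\mathrm{C}(n)}\rho_{1}(\ov{p})\,m(\ov{p})=\alpha_{n}$ by \eqref{eq:formalpha}, and similarly the group with $\ov{p}=e$; hence $\omega_{n}=2\alpha_{n}+S_{4,n}$ with
\[
S_{4,n}=\sum_{(\ov{p},\ov{q})\in\widehat{\mathrm{C}}_{**}(n)}\rho_{2}(\ov{p},\ov{q})\,m(\ov{p})\,m(\ov{q}),\qquad \widehat{\mathrm{C}}_{**}(n):=\bigcup_{j=1}^{n-1}\mathrm{C}(j)\times\mathrm{C}(n-j)
\]
(this is exactly \eqref{eq:reductionomega}, with $S_{4,n}$ written before the $m$'s are expanded into $\alpha$'s), so that $\alpha_{n}=\tfrac12\omega_{n}-\tfrac12 S_{4,n}$. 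The essential difference from \eqref{eq:mk:int} is that in $\widehat{\mathrm{C}}_{**}(n)$ both $\ov{p}$ and $\ov{q}$ are genuine compositions, never the empty word $e$.

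Next I would expand each factor $m_{\ov{p}(j)}$ and $m_{\ov{q}(i)}$ by means of \eqref{formula:inv:2} and \eqref{def:funcphi3}, exactly as in \eqref{eq:mp}--\eqref{eq:mq}, turning $-\tfrac12 S_{4,n}$ into a multiple sum indexed by a set $\mathcal{S}_{4}$ of tuples $((\ov{p},\ov{q}),\ov{\pi},\ov{\tau},\ov{\pi}',\ov{\tau}')$ of the form \eqref{descStilde}, but now with $(\ov{p},\ov{q})\in\widehat{\mathrm{C}}_{**}(n)$ and with $\mathcal{T}_{3}$-subtrees $t_{j},t_{i}'$. Let $\mathcal{T}_{4}$ denote the collection of all trees in the classes $\mathcal{T}_{4}(\ov{n})$ with $\ov{n}\in\mathrm{C}_{*}(n)$. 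I would then transplant verbatim the three-step construction of the map $T_{3}$ from the proof of Theorem~\ref{theo:invrelmomega} to obtain a map $T_{4}\colon\mathcal{S}_{4}\longrightarrow\mathcal{T}_{4}$: form the two-level tree whose level-$1$ vertices are, from left to right, the entries of $\ov{p}$ followed by the entries of $\ov{q}$, colour the edges to the $\ov{p}$-block blue and those to the $\ov{q}$-block red, extend the trees $t_{j},t_{i}'$ to a common height, colour the new extension edges by $B3)$ (blue when $t_{j}$ has height zero, red when $t_{i}'$ has height zero), and append them to the corresponding level-$1$ vertices. Because $\ov{p}\neq e$ and $\ov{q}\neq e$, the root of the resulting tree has at least one blue and at least one red edge, so condition $C2)$ holds and the tree lies in $\mathcal{T}_{4}(\ov{n})$, where $\ov{n}\in\mathrm{C}_{*}(n)$ is the concatenation of $\ov{n}_{1},\dots,\ov{n}_{\ell(\ov{p})},\ov{n}_{1}',\dots,\ov{n}_{\ell(\ov{q})}'$; conversely, every tree of $\mathcal{T}_{4}$ is obtained exactly once this way, so $T_{4}$ is a bijection.

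One then checks, just as in the verification of \eqref{keyrel}, the weight identity
\[
-\tfrac12\,\rho_{2}(\ov{p},\ov{q})\Big(\prod_{j=1}^{\ell(\ov{p})}w_{3}(t_{j})\,\omega(\ov{n}_{j})\Big)\Big(\prod_{i=1}^{\ell(\ov{q})}w_{3}(t_{i}')\,\omega(\ov{n}_{i}')\Big)=w_{4}(t)\,\omega(\ov{n}),
\]
where the factor $-\tfrac12\rho_{2}(\ov{p},\ov{q})$ is absorbed into $\kappa_{4}$ evaluated at the multi-branching root of $t$ — indeed, by \eqref{def:genrho} that value equals $-\tfrac12\rho_{2}(\ov{p},\ov{q})$ with split point $j=\ell(\ov{p})$ — while every non-root vertex contributes the same factor as in $\mathcal{T}_{3}$ since $\kappa_{4}=\kappa_{3}$ off the root. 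Since $\mathcal{T}_{4}((n))$ consists only of the single-vertex tree, of weight $\tfrac12$, we have $\tfrac12\omega_{n}=\phi_{4}((n))\,\omega((n))$, and combining this with the bijection $T_{4}$ yields $\alpha_{n}=\phi_{4}((n))\,\omega((n))+\sum_{\ov{n}\in\mathrm{C}_{*}(n)}\sum_{t\in\mathcal{T}_{4}(\ov{n})}w_{4}(t)\,\omega(\ov{n})=\sum_{\ov{n}\in\mathrm{C}(n)}\phi_{4}(\ov{n})\,\omega(\ov{n})$, which is \eqref{eq:alphafomega}. The main obstacle is the same as in the earlier proofs: the careful verification that $T_{4}$ is well defined and bijective and that the displayed weight identity holds. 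The colour-propagation bookkeeping is identical to that for $T_{3}$, because below the root the defining rules of $\mathcal{T}_{4}$ coincide with those of $\mathcal{T}_{3}$; the only genuinely new point — and the reason the output is indexed by $\mathcal{T}_{4}$ rather than $\mathcal{T}_{3}$ — is that routing the computation through \eqref{eq:formalpha} (equivalently, through $S_{4,n}$ rather than the full sum behind \eqref{eq:mk:int}) forces $\ov{p},\ov{q}\neq e$, which is precisely the root-colouring condition $C2)$ distinguishing $\mathcal{T}_{4}$ from $\mathcal{T}_{3}$.
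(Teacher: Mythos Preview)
Your proof is correct and arrives at exactly the same intermediate identity as the paper, namely
\[
\alpha_{n}=\frac{\omega_{n}}{2}-\frac{1}{2}\sum_{(\ov{p},\ov{q})\in\widetilde{\mathrm{C}}(n)}\rho_{2}(\ov{p},\ov{q})\,m(\ov{p})\,m(\ov{q}),\qquad \widetilde{\mathrm{C}}(n)=\bigcup_{j=1}^{n-1}\mathrm{C}(j)\times\mathrm{C}(n-j),
\]
after which the expansion via \eqref{formula:inv:2} and the bijection $T_{4}$ are identical to the paper's.

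The only difference is in how this identity is reached. You obtain it in one stroke by splitting the sum \eqref{eq:formomega} for $\omega_{n}$ according to whether $\ov{p}=e$, $\ov{q}=e$, or neither, recognising each of the first two pieces as $\alpha_{n}$ via \eqref{eq:formalpha}. The paper instead writes $\alpha_{n}=m_{n}+\beta_{n}$ with $\beta_{n}=\sum_{\ov{p}\in\mathrm{C}_{*}(n)}\rho_{1}(\ov{p})\,m(\ov{p})$, expands $m_{n}$ through \eqref{eq:mkexp}, and then observes that the $\ov{p}=e$ and $\ov{q}=e$ blocks of that expansion each equal $-\beta_{n}$, cancelling against the added $\beta_{n}$. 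Your route is shorter and avoids the detour through $m_{n}+\beta_{n}$ and the cancellation; the paper's route has the mild expository advantage of reusing \eqref{eq:mkexp} verbatim. Either way, the substantive combinatorial content (the construction and bijectivity of $T_{4}$, and the weight identity at the root) is the same.
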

\begin{proof}
According to \eqref{eq:formalpha}, we have
\begin{equation}\label{eq:auxalpham}
\alpha_{n}=\sum_{\ov{p}\in\mathrm{C}(n)}\rho_{1}(\ov{p})\,m(\ov{p})=m_{n}+\beta_{n},
\end{equation}
where
\[
\beta_{n}:=\sum_{\ov{p}\in\mathrm{C}_{*}(n)} \rho_{1}(\ov{p})\,m(\ov{p}).
\]
Using \eqref{eq:mp}, we can rewrite $\beta_{n}$ as
\[
\beta_{n}=\sum_{\ov{p}\in\mathrm{C}_{*}(n)}\sum_{(\ov{n}_{1},\ldots,\ov{n}_{\ell(\ov{p})})}\sum_{(t_{1},\ldots,t_{\ell(\ov{p})})}\rho_{1}(\ov{p})\left(\prod_{j=1}^{\ell(\ov{p})}w_{3}(t_{j})\,\omega(\ov{n}_{j})\right),
\]
and, reproducing \eqref{eq:mkexp}, we have
\begin{equation}\label{eq:mkexp:again}
m_{n}=\frac{\omega_{n}}{2}-\frac{1}{2}\sum_{(\ov{p},\ov{q})\in\widehat{\mathrm{C}}_{*}(n)}\sum_{\ov{\pi}}\sum_{\ov{\tau}}\sum_{\ov{\pi}'}\sum_{\ov{\tau}'}\rho_{2}(\ov{p},\ov{q})\left(\prod_{j=1}^{\ell(\ov{p})}w_{3}(t_{j})\,\omega(\ov{n}_{j})\right)\left(\prod_{i=1}^{\ell(\ov{q})}w_{3}(t_{i}')\,\omega(\ov{n}_{i}')\right)
\end{equation}
where the summation indexes have the same meaning as in the referenced formulas.

If we distinguish in the summation in \eqref{eq:mkexp:again} the terms with $\ov{p}=e$, we readily see that the sum of all these terms is exactly $\beta_{n}$ (here we are disregarding the prefactor $-\frac{1}{2}$). Likewise, the sum of all the terms obtained by taking $\ov{q}=e$ is $\beta_{n}$. The remaining terms in the summation are those corresponding to the choice $(\ov{p},\ov{q})\in\widehat{\mathrm{C}}_{*}(n)$, $\ov{p}\neq e$, $\ov{q}\neq e$, or equivalently, $(\ov{p},\ov{q})\in \widetilde{\mathrm{C}}(n):=\bigcup_{j=1}^{n-1}\mathrm{C}(j)\times\mathrm{C}(n-j)$. Therefore, these considerations together with \eqref{eq:auxalpham} and \eqref{eq:mkexp:again} imply that
\[
\alpha_{n}=\frac{\omega_{n}}{2}-\frac{1}{2}\sum_{(\ov{p},\ov{q})\in \widetilde{\mathrm{C}}(n)}\sum_{\ov{\pi}}\sum_{\ov{\tau}}\sum_{\ov{\pi}'}\sum_{\ov{\tau}'}\rho_{2}(\ov{p},\ov{q})\left(\prod_{j=1}^{\ell(\ov{p})}w_{3}(t_{j})\,\omega(\ov{n}_{j})\right)\left(\prod_{i=1}^{\ell(\ov{q})}w_{3}(t_{i}')\,\omega(\ov{n}_{i}')\right).
\]

The rest of the argument can be completed as in the proof of \eqref{keyrel}. Indeed, we can consider the set $\mathcal{S}_{4}$ consisting of all the tuples $((\ov{p},\ov{q}),\ov{\pi},\ov{\tau},\ov{\pi}',\ov{\tau}')$ satisfying \eqref{descStilde} with $\widehat{\mathrm{C}}_{*}(k)$ replaced by $\widetilde{\mathrm{C}}(n)=\bigcup_{j=1}^{n-1}\mathrm{C}(j)\times\mathrm{C}(n-j)$, and consider the set $\mathcal{T}_{4}$ consisting of all trees in the classes $\mathcal{T}_{4}(\ov{n})$, $\ov{n}\in\mathrm{C}_{*}(\ov{n})$. If we define the map $T_{4}:\mathcal{S}_{4}\longrightarrow\mathcal{T}_{4}$ the same way $T_{3}$ was defined in the proof of \eqref{keyrel}, then $T_{4}$ is a bijection, which will imply that
\[
\alpha_{n}=\frac{\omega_{n}}{2}+\sum_{\ov{n}\in\mathrm{C}_{*}(n)}\sum_{t\in\mathcal{T}_{4}(\ov{n})}w_{4}(t)\,\omega(\ov{n})
\]
and this is \eqref{eq:alphafomega}.
\end{proof}

\begin{figure}
\begin{center}
\begin{tikzpicture}[level distance=10mm,
every node/.style={inner sep=1pt,circle,draw},
level 1/.style={sibling distance=10mm}]
\node at (0,0) {$2$};
\node at (2,0) {$2$}
child {node{$1$} edge from parent node[left, draw=none, scale=0.8] {b}}
child {node{$1$} edge from parent node[right, draw=none, scale=0.8] {b}};
\node at (4,0) {$2$}
child {node{$1$} edge from parent node[left, draw=none, scale=0.8] {b}}
child {node{$1$}};
\node[draw=none, scale=0.8] at (4.36,-0.52) {r};
\node at (6,0) {$2$}
child {node{$1$} edge from parent node[left, draw=none, scale=0.8] {r}}
child {node{$1$}};
\node[draw=none, scale=0.8] at (6.36,-0.52) {r};
\node[draw=none] at (0,-0.5) {$\widetilde{\mathbf{t}}_{1}$};
\node[draw=none] at (2,-1.5) {$\widetilde{\mathbf{t}}_{2}$};
\node[draw=none] at (4,-1.5) {$\widetilde{\mathbf{t}}_{3}$};
\node[draw=none] at (6,-1.5) {$\widetilde{\mathbf{t}}_{4}$};
\end{tikzpicture}
\end{center}
\caption{The four trees that form the set $\bigcup_{\ov{n}\in\mathrm{C}(2)}\mathcal{T}_{3}(\ov{n}).$}
\label{exampletrees1}
\end{figure}
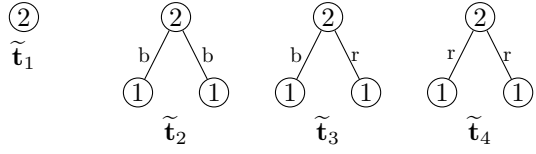

We finish this section illustrating the formulas \eqref{formula:inv:2} and \eqref{eq:alphafomega} in the cases $n=2, 3$. 

The four trees that form the set $\bigcup_{\ov{n}\in\mathrm{C}(2)}\mathcal{T}_{3}(\ov{n})$ are shown in Fig.~\ref{exampletrees1}. We have $w_{3}(\widetilde{\mathbf{t}}_{1})=1/2$, $w_{3}(\widetilde{\mathbf{t}}_{2})=w_{3}(\widetilde{\mathbf{t}}_{4})=-1/8$, and $w_{3}(\widetilde{\mathbf{t}}_{3})=-1/4$, hence according to \eqref{formula:inv:2},
\[
m_{2}=w_{3}(\widetilde{\mathbf{t}}_{1})\,\omega_{2}+(w_{3}(\widetilde{\mathbf{t}}_{2})+w_{3}(\widetilde{\mathbf{t}}_{3})+w_{3}(\widetilde{\mathbf{t}}_{4}))\,\omega_{1}^2=\frac{1}{2}\,\omega_{2}-\frac{1}{2}\,\omega_{1}^2.
\]
The two trees that form the set $\bigcup_{\ov{n}\in\mathrm{C}(2)}\mathcal{T}_{4}(\ov{n})$ are the trees $\widetilde{\mathbf{t}}_{1}$ and $\widetilde{\mathbf{t}}_{3}$ shown in Fig.~\ref{exampletrees1}. Since $w_{4}(\widetilde{\mathbf{t}}_{1})=1/2$, $w_{4}(\widetilde{\mathbf{t}}_{3})=-1/4$, formula \eqref{eq:alphafomega} gives
\[
\alpha_{2}=w_{4}(\widetilde{\mathbf{t}}_{1})\,\omega_{2}+w_{4}(\widetilde{\mathbf{t}}_{3})\,\omega_{1}^2=\frac{1}{2}\,\omega_{2}-\frac{1}{4}\,\omega_{1}^2.
\]

Now we check the formulas in the case $n=3$. The set $\bigcup_{\ov{n}\in\mathrm{C}(3)}\mathcal{T}_{3}(\ov{n})$ consists of twenty-nine trees that are shown in Fig.~\ref{exampletrees2}. The reader can check that the values of the $w_{3}$-weights of these trees are
\begin{align*}
w_{3}(\mathbf{t}_{1}) & =\frac{1}{2}\\
w_{3}(\mathbf{t}_{j}) & =-\frac{1}{4},\quad j\in\{2, 4\}\\
w_{3}(\mathbf{t}_{j}) & =-\frac{3}{8},\quad j\in\{3, 6\}\\
w_{3}(\mathbf{t}_{j}) & =-\frac{1}{8}, \quad j\in\{5, 7, 9, 10\}\\
w_{3}(\mathbf{t}_{j}) & =-\frac{1}{16}, \quad j\in\{8, 11\}\\
w_{3}(\mathbf{t}_{j}) & =\frac{1}{16}, \quad j\in\{12, 14, 18, 20, 22, 28\}\\
w_{3}(\mathbf{t}_{j}) & =\frac{1}{8}, \quad j\in\{13, 19\}\\
w_{3}(\mathbf{t}_{j}) & =\frac{3}{32}, \quad j\in\{15, 17, 24, 26\}\\
w_{3}(\mathbf{t}_{j}) & =\frac{3}{16}, \quad j\in\{16, 25\}\\
w_{3}(\mathbf{t}_{j}) & =\frac{1}{32}, \quad j\in\{21, 23, 27, 29\}
\end{align*} 
which gives $\phi_{3}((3))=1/2$, $\phi_{3}((2,1))+\phi_{3}((1,2))=\sum_{j=2}^{7}w_{3}(\mathbf{t}_{j})=-3/2$, and $\phi_{3}((1,1,1))=\sum_{j=8}^{29}w_{3}(\mathbf{t}_{j})=9/8$. This shows that
\[
m_{3}=\sum_{\ov{n}\in\mathrm{C}(3)}\phi_{3}(\ov{n})\,\omega(\ov{n})=\frac{1}{2}\,\omega_{3}-\frac{3}{2}\,\omega_{2}\,\omega_{1}+\frac{9}{8}\,\omega_{1}^{3}.
\]
There are eleven trees in the collection 
\[
\bigcup_{\ov{n}\in\mathrm{C}(3)}\mathcal{T}_{4}(\ov{n})=\{\mathbf{t}_{j}: j\in J\},\quad J=\{1, 3, 6, 9, 10, 15, 16, 17, 24, 26\}, 
\]
where the right-hand side refers to trees in Fig.~\ref{exampletrees2}. The reader can check that
\begin{align*}
w_{4}(\mathbf{t}_{1}) & =\frac{1}{2}\\
w_{4}(\mathbf{t}_{j}) & =-\frac{3}{8},\quad j\in\{3, 6\}\\
w_{4}(\mathbf{t}_{j}) & =-\frac{1}{8},\quad j\in\{9, 10\}\\
w_{4}(\mathbf{t}_{j}) & =\frac{3}{32}, \quad j\in\{15, 17, 24, 26\}\\
w_{4}(\mathbf{t}_{j}) & =\frac{3}{16}, \quad j\in\{16, 25\}
\end{align*} 
which gives $\phi_{4}((3))=1/2$, $\phi_{4}((2,1))+\phi_{4}((1,2))=w_{4}(\mathbf{t}_{3})+w_{4}(\mathbf{t}_{6})=-3/4$, $\phi_{4}((1,1,1))=\sum_{j\in J\setminus\{1,3,6\}}w_{4}(\mathbf{t}_{j})=1/2$, hence
\[
\alpha_{3}=\sum_{\ov{n}\in\mathrm{C}(3)}\phi_{4}(\ov{n})\,\omega(\ov{n})=\frac{1}{2}\,\omega_{3}-\frac{3}{4}\,\omega_{2}\,\omega_{1}+\frac{1}{2}\,\omega_{1}^{3}.
\]

\begin{figure}
\begin{center}
\begin{tikzpicture}[level distance=10mm,
every node/.style={inner sep=1pt,circle,draw},
level 1/.style={sibling distance=10mm},
level 2/.style={sibling distance=6mm}]
\node at (0.2,0) {$3$};
\node[draw=none] at (0.2,-0.5) {$\mathbf{t}_{1}$};
\node at (2.2,0) {$3$}
child {node{$2$} edge from parent node[left, draw=none, scale=0.8] {b}}
child {node{$1$} edge from parent node[right, draw=none, scale=0.8] {b}};
\node[draw=none] at (2.2,-1.5) {$\mathbf{t}_{2}$};
\node at (4.2,0) {$3$}
child {node{$2$} edge from parent node[left, draw=none, scale=0.8] {b}}
child {node{$1$} edge from parent node[right, draw=none, scale=0.8] {r}};
\node[draw=none] at (4.2,-1.5) {$\mathbf{t}_{3}$};
\node at (6.2,0) {$3$}
child {node{$2$} edge from parent node[left, draw=none, scale=0.8] {r}}
child {node{$1$} edge from parent node[right, draw=none, scale=0.8] {r}};
\node[draw=none] at (6.2,-1.5) {$\mathbf{t}_{4}$};
\node at (8.2,0) {$3$}
child {node{$1$} edge from parent node[left, draw=none, scale=0.8] {b}}
child {node{$2$} edge from parent node[right, draw=none, scale=0.8] {b}};
\node[draw=none] at (8.2,-1.5) {$\mathbf{t}_{5}$};
\node at (10.2,0) {$3$}
child {node{$1$} edge from parent node[left, draw=none, scale=0.8] {b}}
child {node{$2$} edge from parent node[right, draw=none, scale=0.8] {r}};
\node[draw=none] at (10.2,-1.5) {$\mathbf{t}_{6}$};
\node at (12.2,0) {$3$}
child {node{$1$} edge from parent node[left, draw=none, scale=0.8] {r}}
child {node{$2$} edge from parent node[right, draw=none, scale=0.8] {r}};
\node[draw=none] at (12.2,-1.5) {$\mathbf{t}_{7}$};
\node at (2,-2.2) {$3$}
child {node{$1$} edge from parent node[left, draw=none, scale=0.8] {b}}
child {node{$1$}}
child {node{$1$} edge from parent node[right, draw=none, scale=0.8] {b}};
\node[scale=0.8, draw=none] at (2.12,-2.8) {b};
\node[draw=none] at (2,-3.7) {$\mathbf{t}_{8}$};
\node at (5,-2.2) {$3$}
child {node{$1$} edge from parent node[left, draw=none, scale=0.8] {b}}
child {node{$1$}}
child {node{$1$} edge from parent node[right, draw=none, scale=0.8] {r}};
\node[scale=0.8, draw=none] at (5.12,-2.8) {b};
\node[draw=none] at (5,-3.7) {$\mathbf{t}_{9}$};
\node at (8,-2.2) {$3$}
child {node{$1$} edge from parent node[left, draw=none, scale=0.8] {b}}
child {node{$1$}}
child {node{$1$} edge from parent node[right, draw=none, scale=0.8] {r}};
\node[scale=0.8, draw=none] at (8.12,-2.8) {r};
\node[draw=none] at (8,-3.7) {$\mathbf{t}_{10}$};
\node at (11,-2.2) {$3$}
child {node{$1$} edge from parent node[left, draw=none, scale=0.8] {r}}
child {node{$1$}}
child {node{$1$} edge from parent node[right, draw=none, scale=0.8] {r}};
\node[scale=0.8, draw=none] at (11.12,-2.8) {r};
\node[draw=none] at (11,-3.7) {$\mathbf{t}_{11}$};
\node at (0.8,-4.4) {$3$}
child {node{$2$}
child{node{$1$} edge from parent node[left, draw=none, scale=0.8]{b}}
child{node{$1$} edge from parent node[right, draw=none, scale=0.8]{b}}
edge from parent node[left, draw=none, scale=0.8]{b}}
child{node{$1$} 
child{node{$1$} edge from parent node[right, draw=none, scale=0.8]{b}}
edge from parent node[right, draw=none, scale=0.8]{b}};
\node[draw=none] at (0.8,-6.9) {$\mathbf{t}_{12}$};
\node at (3.3,-4.4) {$3$}
child {node{$2$}
child{node{$1$} edge from parent node[left, draw=none, scale=0.8]{b}}
child{node{$1$} edge from parent node[right, draw=none, scale=0.8]{r}}
edge from parent node[left, draw=none, scale=0.8]{b}}
child{node{$1$} 
child{node{$1$} edge from parent node[right, draw=none, scale=0.8]{b}}
edge from parent node[right, draw=none, scale=0.8]{b}};
\node[draw=none] at (3.3,-6.9) {$\mathbf{t}_{13}$};
\node at (5.8,-4.4) {$3$}
child {node{$2$}
child{node{$1$} edge from parent node[left, draw=none, scale=0.8]{r}}
child{node{$1$} edge from parent node[right, draw=none, scale=0.8]{r}}
edge from parent node[left, draw=none, scale=0.8]{b}}
child{node{$1$} 
child{node{$1$} edge from parent node[right, draw=none, scale=0.8]{b}}
edge from parent node[right, draw=none, scale=0.8]{b}};
\node[draw=none] at (5.8,-6.9) {$\mathbf{t}_{14}$};
\node at (8.3,-4.4) {$3$}
child {node{$2$}
child{node{$1$} edge from parent node[left, draw=none, scale=0.8]{b}}
child{node{$1$} edge from parent node[right, draw=none, scale=0.8]{b}}
edge from parent node[left, draw=none, scale=0.8]{b}}
child{node{$1$} 
child{node{$1$} edge from parent node[right, draw=none, scale=0.8]{r}}
edge from parent node[right, draw=none, scale=0.8]{r}};
\node[draw=none] at (8.3,-6.9) {$\mathbf{t}_{15}$};
\node at (10.8,-4.4) {$3$}
child {node{$2$}
child{node{$1$} edge from parent node[left, draw=none, scale=0.8]{b}}
child{node{$1$} edge from parent node[right, draw=none, scale=0.8]{r}}
edge from parent node[left, draw=none, scale=0.8]{b}}
child{node{$1$} 
child{node{$1$} edge from parent node[right, draw=none, scale=0.8]{r}}
edge from parent node[right, draw=none, scale=0.8]{r}};
\node[draw=none] at (10.8,-6.9) {$\mathbf{t}_{16}$};
\node at (13.3,-4.4) {$3$}
child {node{$2$}
child{node{$1$} edge from parent node[left, draw=none, scale=0.8]{r}}
child{node{$1$} edge from parent node[right, draw=none, scale=0.8]{r}}
edge from parent node[left, draw=none, scale=0.8]{b}}
child{node{$1$} 
child{node{$1$} edge from parent node[right, draw=none, scale=0.8]{r}}
edge from parent node[right, draw=none, scale=0.8]{r}};
\node[draw=none] at (13.3,-6.9) {$\mathbf{t}_{17}$};
\node at (0.8,-7.6) {$3$}
child {node{$2$}
child{node{$1$} edge from parent node[left, draw=none, scale=0.8]{b}}
child{node{$1$} edge from parent node[right, draw=none, scale=0.8]{b}}
edge from parent node[left, draw=none, scale=0.8]{r}}
child{node{$1$} 
child{node{$1$} edge from parent node[right, draw=none, scale=0.8]{r}}
edge from parent node[right, draw=none, scale=0.8]{r}};
\node[draw=none] at (0.8,-10.1) {$\mathbf{t}_{18}$};
\node at (3.3,-7.6) {$3$}
child {node{$2$}
child{node{$1$} edge from parent node[left, draw=none, scale=0.8]{b}}
child{node{$1$} edge from parent node[right, draw=none, scale=0.8]{r}}
edge from parent node[left, draw=none, scale=0.8]{r}}
child{node{$1$} 
child{node{$1$} edge from parent node[right, draw=none, scale=0.8]{r}}
edge from parent node[right, draw=none, scale=0.8]{r}};
\node[draw=none] at (3.3,-10.1) {$\mathbf{t}_{19}$};
\node at (5.8,-7.6) {$3$}
child {node{$2$}
child{node{$1$} edge from parent node[left, draw=none, scale=0.8]{r}}
child{node{$1$} edge from parent node[right, draw=none, scale=0.8]{r}}
edge from parent node[left, draw=none, scale=0.8]{r}}
child{node{$1$} 
child{node{$1$} edge from parent node[right, draw=none, scale=0.8]{r}}
edge from parent node[right, draw=none, scale=0.8]{r}};
\node[draw=none] at (5.8,-10.1) {$\mathbf{t}_{20}$};
\node at (8.3,-7.6) {$3$}
child{node{$1$} 
child{node{$1$} edge from parent node[left, draw=none, scale=0.8]{b}}
edge from parent node[left, draw=none, scale=0.8]{b}}
child {node{$2$}
child{node{$1$} edge from parent node[left, draw=none, scale=0.8]{b}}
child{node{$1$} edge from parent node[right, draw=none, scale=0.8]{b}}
edge from parent node[right, draw=none, scale=0.8]{b}};
\node[draw=none] at (8.3,-10.1) {$\mathbf{t}_{21}$};
\node at (10.8,-7.6) {$3$}
child{node{$1$} 
child{node{$1$} edge from parent node[left, draw=none, scale=0.8]{b}}
edge from parent node[left, draw=none, scale=0.8]{b}}
child {node{$2$}
child{node{$1$} edge from parent node[left, draw=none, scale=0.8]{b}}
child{node{$1$} edge from parent node[right, draw=none, scale=0.8]{r}}
edge from parent node[right, draw=none, scale=0.8]{b}};
\node[draw=none] at (10.8,-10.1) {$\mathbf{t}_{22}$};
\node at (13.3,-7.6) {$3$}
child{node{$1$} 
child{node{$1$} edge from parent node[left, draw=none, scale=0.8]{b}}
edge from parent node[left, draw=none, scale=0.8]{b}}
child {node{$2$}
child{node{$1$} edge from parent node[left, draw=none, scale=0.8]{r}}
child{node{$1$} edge from parent node[right, draw=none, scale=0.8]{r}}
edge from parent node[right, draw=none, scale=0.8]{b}};
\node[draw=none] at (13.3,-10.1) {$\mathbf{t}_{23}$};
\node at (0.8,-10.8) {$3$}
child{node{$1$} 
child{node{$1$} edge from parent node[left, draw=none, scale=0.8]{b}}
edge from parent node[left, draw=none, scale=0.8]{b}}
child {node{$2$}
child{node{$1$} edge from parent node[left, draw=none, scale=0.8]{b}}
child{node{$1$} edge from parent node[right, draw=none, scale=0.8]{b}}
edge from parent node[right, draw=none, scale=0.8]{r}};
\node[draw=none] at (0.8,-13.3) {$\mathbf{t}_{24}$};
\node at (3.3,-10.8) {$3$}
child{node{$1$} 
child{node{$1$} edge from parent node[left, draw=none, scale=0.8]{b}}
edge from parent node[left, draw=none, scale=0.8]{b}}
child {node{$2$}
child{node{$1$} edge from parent node[left, draw=none, scale=0.8]{b}}
child{node{$1$} edge from parent node[right, draw=none, scale=0.8]{r}}
edge from parent node[right, draw=none, scale=0.8]{r}};
\node[draw=none] at (3.3,-13.3) {$\mathbf{t}_{25}$};
\node at (5.8,-10.8) {$3$}
child{node{$1$} 
child{node{$1$} edge from parent node[left, draw=none, scale=0.8]{b}}
edge from parent node[left, draw=none, scale=0.8]{b}}
child {node{$2$}
child{node{$1$} edge from parent node[left, draw=none, scale=0.8]{r}}
child{node{$1$} edge from parent node[right, draw=none, scale=0.8]{r}}
edge from parent node[right, draw=none, scale=0.8]{r}};
\node[draw=none] at (5.8,-13.3) {$\mathbf{t}_{26}$};
\node at (8.3,-10.8) {$3$}
child{node{$1$} 
child{node{$1$} edge from parent node[left, draw=none, scale=0.8]{r}}
edge from parent node[left, draw=none, scale=0.8]{r}}
child {node{$2$}
child{node{$1$} edge from parent node[left, draw=none, scale=0.8]{b}}
child{node{$1$} edge from parent node[right, draw=none, scale=0.8]{b}}
edge from parent node[right, draw=none, scale=0.8]{r}};
\node[draw=none] at (8.3,-13.3) {$\mathbf{t}_{27}$};
\node at (10.8,-10.8) {$3$}
child{node{$1$} 
child{node{$1$} edge from parent node[left, draw=none, scale=0.8]{r}}
edge from parent node[left, draw=none, scale=0.8]{r}}
child {node{$2$}
child{node{$1$} edge from parent node[left, draw=none, scale=0.8]{b}}
child{node{$1$} edge from parent node[right, draw=none, scale=0.8]{r}}
edge from parent node[right, draw=none, scale=0.8]{r}};
\node[draw=none] at (10.8,-13.3) {$\mathbf{t}_{28}$};
\node at (13.3,-10.8) {$3$}
child{node{$1$} 
child{node{$1$} edge from parent node[left, draw=none, scale=0.8]{r}}
edge from parent node[left, draw=none, scale=0.8]{r}}
child {node{$2$}
child{node{$1$} edge from parent node[left, draw=none, scale=0.8]{r}}
child{node{$1$} edge from parent node[right, draw=none, scale=0.8]{r}}
edge from parent node[right, draw=none, scale=0.8]{r}};
\node[draw=none] at (13.3,-13.3) {$\mathbf{t}_{29}$};
\end{tikzpicture}
\end{center}
\caption{The twenty-nine trees that form the set $\bigcup_{\ov{n}\in\mathrm{C}(3)}\mathcal{T}_{3}(\ov{n}).$}
\label{exampletrees2}
\end{figure}
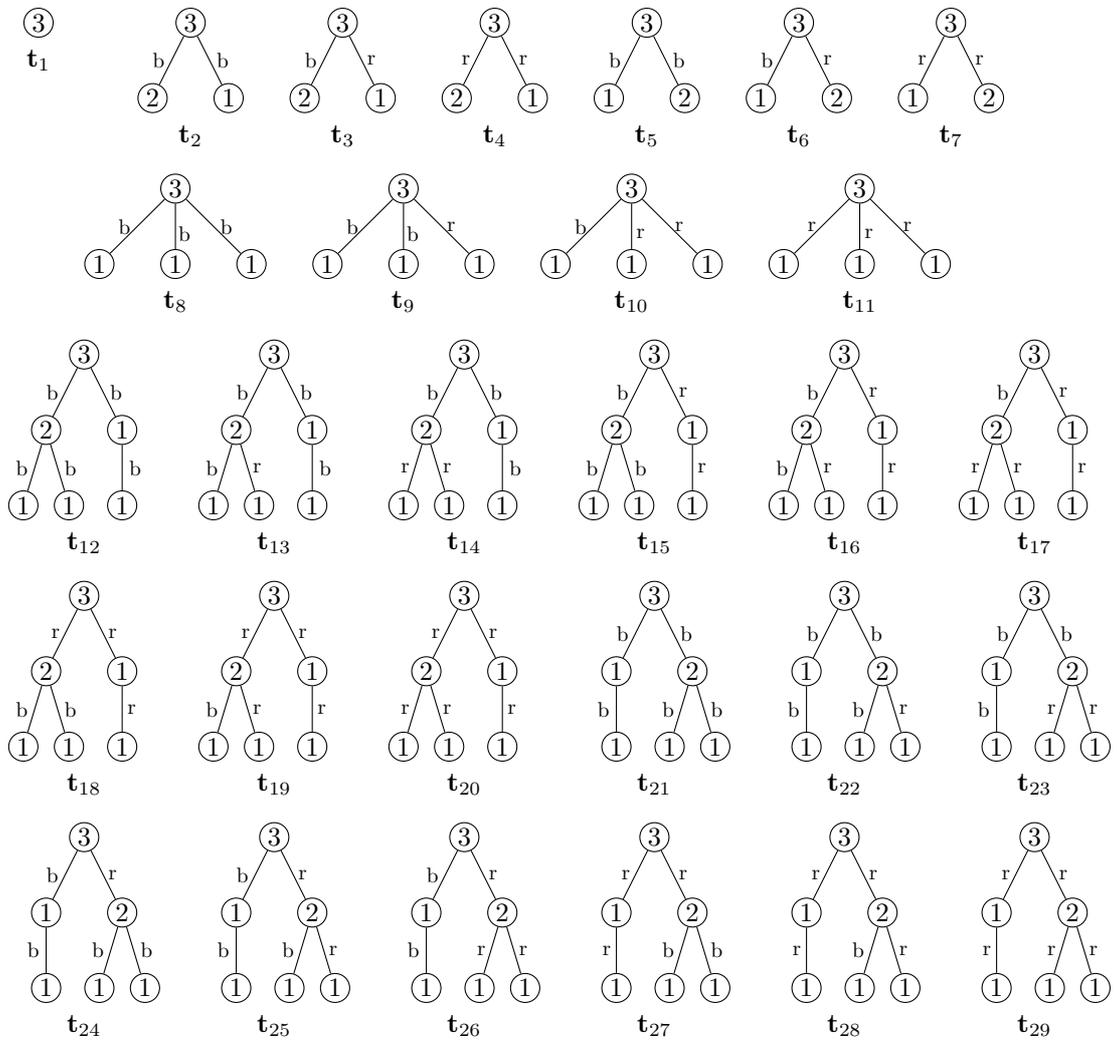

\section{Asymptotics}\label{sec:asymp}

We remind the reader that throughout this work we keep the hypotheses stated at the beginning of the Introduction. Recall also that in subsection~\ref{subs:lattice} we defined $\mathcal{P}(n,k,i)$ as the collection of all paths $\gamma$ on $\mathcal{G}$ satisfying $1\leq \min(\gamma)\leq \max(\gamma)\leq n$ and having initial point $(0,i)$ and ending point $(k,i)$.

\begin{lemma}\label{lem:equivpaths}
Let $m\geq 1$ and $n\geq 1+2m$ be fixed. For each $i$ satisfying $1+m\leq i\leq n-m$, the map $\gamma\mapsto \gamma-i$ is a bijection from $\mathcal{P}(n,2m,i)$ onto $\mathcal{P}_{m}$, where $\mathcal{P}_{m}$ is the collection of all generalized Dyck paths of length $2m$. Similarly, the map $\gamma\mapsto \gamma -1$ is a bijection from $\mathcal{P}(n,2m,1)$ onto $\mathcal{D}_{m}$, where $\mathcal{D}_{m}$ is the collection of all Dyck paths of length $2m$. We have
\begin{align}
\omega_{m} & =\sum_{\gamma\in\mathcal{P}(n,2m,i)}\mathbb{E}(w(\gamma)),\qquad 1+m\leq i\leq n-m. \label{eq:weightPnki}\\
\alpha_{m} & =\sum_{\gamma\in\mathcal{P}(n,2m,1)}\mathbb{E}(w(\gamma)). \label{eq:weightPnk1}
\end{align}
\end{lemma}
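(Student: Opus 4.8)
The plan is to establish the two translation statements first and then deduce the weight identities from the i.i.d.\ hypothesis together with a shift-invariance observation. For the first translation statement, observe that a path $\gamma$ of length $2m$ on $\mathcal{G}$ whose heights at times $0$ and $2m$ are both equal to $i$ has exactly $m$ up steps and $m$ down steps, hence $i-m\le\min(\gamma)\le\max(\gamma)\le i+m$. Therefore, when $1+m\le i\le n-m$, the constraints $1\le\min(\gamma)$ and $\max(\gamma)\le n$ in the definition of $\mathcal{P}(n,2m,i)$ are automatically satisfied, so $\mathcal{P}(n,2m,i)$ is simply the set of \emph{all} length-$2m$ paths from $(0,i)$ to $(2m,i)$. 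Vertical translation by $-i$ is then a bijection of this set onto the set of all length-$2m$ paths from $(0,0)$ to $(2m,0)$, i.e.\ onto $\mathcal{P}_m$. For the second statement, if $\gamma\in\mathcal{P}(n,2m,1)$ then $\max(\gamma)\le 1+m\le n$ holds automatically because $n\ge 1+2m$, so the only effective constraint is $\min(\gamma)\ge1$; after translating by $-1$ this becomes $\min(\gamma-1)\ge0$, and since $\gamma-1$ starts at height $0$ it forces $\min(\gamma-1)=0$, i.e.\ $\gamma-1\in\mathcal{D}_m$. Conversely, translating any Dyck path of length $2m$ upward by one unit lands in $\mathcal{P}(n,2m,1)$, since its maximal height is at most $m+1\le n$; hence $\gamma\mapsto\gamma-1$ is a bijection onto $\mathcal{D}_m$.

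Next I would record a shift-invariance fact for expected weights. By \eqref{weightedges} and \eqref{def:bn}, every edge weight is one of the i.i.d.\ variables $a_k$, $k\in\mathbb{Z}$, namely a down step whose ending vertex has ordinate $k$ carries weight $a_k$. Consequently, for any path $\gamma$ and any $q\in\mathbb{Z}$, the weight $w(\gamma+q)$ is obtained from $w(\gamma)$ by the substitution $a_k\mapsto a_{k+q}$, so $w(\gamma)$ and $w(\gamma+q)$ are identically distributed random variables; in particular $\mathbb{E}(w(\gamma+q))=\mathbb{E}(w(\gamma))$. Combining this with the bijection $\gamma'\mapsto\gamma'+i$ from $\mathcal{P}_m$ onto $\mathcal{P}(n,2m,i)$ and linearity of expectation gives
\[
\sum_{\gamma\in\mathcal{P}(n,2m,i)}\mathbb{E}(w(\gamma))=\sum_{\gamma'\in\mathcal{P}_m}\mathbb{E}(w(\gamma'+i))=\sum_{\gamma'\in\mathcal{P}_m}\mathbb{E}(w(\gamma'))=\mathbb{E}\Bigl(\sum_{\gamma'\in\mathcal{P}_m}w(\gamma')\Bigr)=\mathbb{E}(W_m)=\omega_m,
\]
using \eqref{def:Wm} and \eqref{def:omegam}; the identical argument with $\mathcal{D}_m$ and the shift $+1$ in place of $\mathcal{P}_m$ and the shift $+i$, together with \eqref{def:Am} and \eqref{def:alpham}, yields $\sum_{\gamma\in\mathcal{P}(n,2m,1)}\mathbb{E}(w(\gamma))=\mathbb{E}(A_m)=\alpha_m$.

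The only point that deserves some care is the range bookkeeping in the first step: one has to check that $1+m\le i\le n-m$ (resp.\ $n\ge1+2m$) is exactly what makes the strip constraints in the definition of $\mathcal{P}(n,2m,i)$ (resp.\ $\mathcal{P}(n,2m,1)$) vacuous, so that vertical translation really is a bijection rather than merely an injection. Note also that for the Dyck-path case the monomials $w(\gamma)$ with $\gamma\in\mathcal{P}(n,2m,1)$ genuinely involve the variables $a_1,\dots,a_m$ rather than $a_0,\dots,a_{m-1}$, so the equality with $\alpha_m=\mathbb{E}(A_m)$ really does rely on shift-invariance of the expected weight and not just on a relabelling of variables. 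Apart from this, I do not expect any substantive obstacle.
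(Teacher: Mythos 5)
Your proposal is correct and follows essentially the same route as the paper: establish the vertical-translation bijections by noting that the height constraints $1\le\min(\gamma)\le\max(\gamma)\le n$ become vacuous under the stated ranges for $i$ and $n$, and then transfer the expected weights via the shift-invariance $\mathbb{E}(w(\gamma+q))=\mathbb{E}(w(\gamma))$ coming from the i.i.d.\ hypothesis. Your explicit remark that the Dyck-path identity \eqref{eq:weightPnk1} genuinely needs this shift-invariance (since the monomials involve $a_1,\dots,a_m$ rather than $a_0,\dots,a_{m-1}$) is a point the paper passes over silently, and is worth making.
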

\begin{proof}
Assume that $m\geq 1$, $n\geq 1+2m$ and $1+m\leq i\leq n-m$. Recall that any path $\gamma\in\mathcal{P}(n,2m,i)$ has $m$ up steps and $m$ down steps. The restrictions on the initial height $i$ allow to have the $m$ up steps in any position (equivalently the $m$ down steps in any position). For example, the first $m$ edges could be down steps and all subsequent edges up steps (this is not possible if $i<1+m$). It then follows that the map $\gamma\mapsto \gamma-i$ is a bijection from $\mathcal{P}(n,2m,i)$ onto $\mathcal{P}_{m}$. Moreover, since the random variables $(a_{n})_{n\in\mathbb{Z}}$ are independent and identically distributed, we have
\[
\mathbb{E}(w(\gamma))=\mathbb{E}(w(\gamma-i)),\qquad\mbox{for all}\,\,\gamma\in\mathcal{P}(n,2m,i).
\]
Hence, in view of \eqref{def:Wm} and \eqref{def:omegam},
\[
\sum_{\gamma\in\mathcal{P}(n,2m,i)}\mathbb{E}(w(\gamma))=\sum_{\gamma\in\mathcal{P}(n,2m,i)}\mathbb{E}(w(\gamma-i))=\sum_{\gamma\in\mathcal{P}_{m}}\mathbb{E}(w(\gamma))=\omega_{m},
\]
which establishes \eqref{eq:weightPnki}.

If $n\geq m+1$, then the map $\gamma\mapsto \gamma -1$ is clearly a bijection from $\mathcal{P}(n,2m,1)$ onto $\mathcal{D}_{m}$, and \eqref{eq:weightPnk1} follows from \eqref{def:Am} and \eqref{def:alpham}.\end{proof}

\begin{theorem}
Let $k\in\mathbb{Z}_{\geq 0}$ be fixed, and let $H_{n}$ be the tridiagonal matrix defined in \eqref{def:Hn}. Then,
\begin{align}
\lim_{n\rightarrow\infty}\frac{1}{n}\mathbb{E}(\Tr(H_{n}^{k})) & =\begin{cases}
0, & \mbox{if}\,\,k\,\,\mbox{is odd},\\
\omega_{k/2}, & \mbox{if}\,\,k\,\,\mbox{is even},
\end{cases}\label{eq:asymptrace}\\
\lim_{n\rightarrow\infty}\mathbb{E}(H_{n}^{k}(1,1)) & =\begin{cases}
0, & \mbox{if}\,\,k\,\,\mbox{is odd},\\
\alpha_{k/2}, & \mbox{if}\,\,k\,\,\mbox{is even}.
\end{cases}\label{eq:asymp11entry}
\end{align}
\end{theorem}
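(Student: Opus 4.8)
The plan is to reduce both limits to the path representations \eqref{eq:trace:rep} and \eqref{eq:entry11} and then invoke Lemma~\ref{lem:equivpaths}. First, when $k$ is odd, \eqref{nulltrace11entry} gives $\Tr(H_n^k)=H_n^k(1,1)=0$, so both limits vanish. When $k=0$ we have $H_n^0=I_n$, hence $\tfrac1n\Tr(H_n^0)=1=\omega_0$ and $H_n^0(1,1)=1=\alpha_0$. So from now on one may write $k=2m$ with $m\ge 1$.

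For the $(1,1)$-entry the statement is in fact exact for all large $n$, and no limiting argument is needed: by \eqref{eq:entry11}, $H_n^{2m}(1,1)=\sum_{\gamma\in\mathcal P(n,2m,1)}w(\gamma)$, and taking expectations and applying the second bijection of Lemma~\ref{lem:equivpaths} (whose proof uses only $n\ge m+1$) gives $\mathbb E(H_n^{2m}(1,1))=\alpha_m$ for every $n\ge m+1$. This yields \eqref{eq:asymp11entry}.

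For the trace I would start from \eqref{eq:trace:rep}, take expectations, and for $n\ge 2m+1$ split the sum over the starting height $i$ into the ``bulk'' range $1+m\le i\le n-m$ and the remaining $2m$ ``boundary'' values, i.e.\ $i\in B_n:=\{1,\dots,m\}\cup\{n-m+1,\dots,n\}$. By \eqref{eq:weightPnki} each bulk value of $i$ contributes exactly $\omega_m$, so
\[
\mathbb E\bigl(\Tr(H_n^{2m})\bigr)=(n-2m)\,\omega_m+R_n,\qquad R_n:=\sum_{i\in B_n}\ \sum_{\gamma\in\mathcal P(n,2m,i)}\mathbb E(w(\gamma)).
\]
Dividing by $n$ and letting $n\to\infty$, \eqref{eq:asymptrace} follows as soon as $R_n$ is shown to stay bounded as $n\to\infty$. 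For this, note that $\mathcal P(n,2m,i)$ contains at most $\binom{2m}{m}$ paths, and for any $\gamma\in\mathcal P(n,2m,i)$ the weight $w(\gamma)=\prod_{l=1}^m a_{j_l}$ is a product of $m$ of the i.i.d.\ variables $a_0,a_1,\dots$ (every down step of a path in $\mathcal P(n,2m,i)$ ends at ordinate $\ge 1$, hence carries a weight of nonnegative index), so by AM--GM $w(\gamma)\le\tfrac1m\sum_{l=1}^m a_{j_l}^{m}$ and therefore $\mathbb E(w(\gamma))\le m_m<\infty$. Consequently $|R_n|\le 2m\binom{2m}{m}\,m_m$, a bound independent of $n$, and the argument is complete.

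The only genuinely non-routine ingredient is this uniform-in-$n$ control of the boundary term $R_n$; everything else is a direct application of the path representations and of Lemma~\ref{lem:equivpaths}.
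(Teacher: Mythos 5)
Your proposal is correct and follows essentially the same route as the paper: dispose of odd $k$ via \eqref{nulltrace11entry}, get the exact identity $\mathbb{E}(H_{n}^{2m}(1,1))=\alpha_{m}$ from \eqref{eq:entry11} and Lemma~\ref{lem:equivpaths}, and split the trace sum \eqref{eq:trace:rep} into the bulk range $1+m\le i\le n-m$ (each contributing $\omega_{m}$) plus $2m$ boundary terms. Your explicit uniform bound $|R_{n}|\le 2m\binom{2m}{m}m_{m}$ via AM--GM is a slightly more quantitative justification of the boundary control that the paper dispatches by noting each boundary term is a constant depending only on $m$, but the argument is the same.
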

\begin{proof}
If $k$ is odd then the result follows from \eqref{nulltrace11entry}. Suppose now that $k=2m$, $m\geq 1$. According to \eqref{eq:trace:rep}, we have
\[
\frac{1}{n}\mathbb{E}(\Tr(H_{n}^{2m}))=\frac{1}{n}\sum_{i=1}^{n}\sum_{\gamma\in\mathcal{P}(n,2m,i)}\mathbb{E}(w(\gamma)).
\]
If $n\geq 1+2m$, then
\[
\sum_{\gamma\in\mathcal{P}(n,2m,i)}\mathbb{E}(w(\gamma))=\omega_{m},\qquad 1+m\leq i\leq n-m,
\]
and for each $1\leq i\leq n$, the expression $\sum_{\gamma\in\mathcal{P}(n,2m,i)}\mathbb{E}(w(\gamma))$ is a constant that only depends on $m$ (not on $n$). Therefore
\[
\frac{1}{n}\mathbb{E}(\Tr(H_{n}^{k}))=\frac{n-2m}{n} \omega_{m}+o(1),\quad n\rightarrow \infty,
\]
and \eqref{eq:asymptrace} follows.

In virtue of \eqref{eq:entry11} and \eqref{eq:weightPnk1}, for $n\geq 2m+1$,
\[
\mathbb{E}(H_{n}^{2m}(1,1))=\sum_{\gamma\in\mathcal{P}(n,2m,1)}\mathbb{E}(w(\gamma))=\alpha_{m},
\]
hence \eqref{eq:asymp11entry} follows.\end{proof}

Let $\sigma_{n}$ and $\tau_{n}$ be the discrete random measures indicated in \eqref{def:randomsigman} and \eqref{def:randomtaun}. We consider the averages $\mathbb{E} \sigma_{n}$ and $\mathbb{E} \tau_{n}$, which are the measures satisfying the duality identities 
\begin{align*}
\int f d\mathbb{E} \sigma_{n} & =\mathbb{E} \int f d\sigma_{n},\\
\int f d\mathbb{E} \tau_{n} & =\mathbb{E} \int f d\tau_{n},
\end{align*}
for all $f$ in the space $C_{b}(\mathbb{R})$ of all real-valued bounded continuous functions on $\mathbb{R}$.

\begin{corollary}
Assume there exist probability measures $\sigma$ and $\tau$ on $\mathbb{R}$ with moments of all orders finite, such that for all $k\in\mathbb{Z}_{\geq 0}$,
\begin{align*}
\int x^{k}\,d\sigma(x) & = \begin{cases}
0, & \mbox{if}\,\,k\,\,\mbox{is odd},\\
\omega_{k/2}, & \mbox{if}\,\,k\,\,\mbox{is even},
\end{cases} \\
\int x^{k}\,d\tau(x) & =\begin{cases}
0, & \mbox{if}\,\,k\,\,\mbox{is odd},\\
\alpha_{k/2}, & \mbox{if}\,\,k\,\,\mbox{is even}.
\end{cases}
\end{align*}
Assume further that the measures $\sigma$ and $\tau$ are uniquely determined by their moments. Then, the sequences $(\mathbb{E} \sigma_{n})_{n=1}^{\infty}$ and $(\mathbb{E}\tau_{n})_{n=1}^{\infty}$ converge weakly to $\sigma$ and $\tau$, respectively. That is, for any $f\in C_{b}(\mathbb{R})$, we have
\begin{align}
\lim_{n\rightarrow\infty} \int f d\mathbb{E}\sigma_{n} & =\int f d\sigma,\label{weaklimsigma}\\
\lim_{n\rightarrow\infty} \int f d\mathbb{E}\tau_{n} & =\int f d\tau.\label{weaklimtau}
\end{align}
Moreover, these limits also hold for a continuous function $f$ for which there exists a polynomial $P$ such that $|f(x)|\leq P(x)$ for all $x\in\mathbb{R}$.
\end{corollary}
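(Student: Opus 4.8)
The proof is an application of the classical method of moments. The first step is to compute the moments of the averaged measures $\mathbb{E}\sigma_{n}$ and $\mathbb{E}\tau_{n}$ (both of which are genuine probability measures, as is seen by testing the duality identities against $f\equiv 1$). For each $k\in\mathbb{Z}_{\geq 0}$, extending the duality identity for $\mathbb{E}\sigma_{n}$ from $C_{b}(\mathbb{R})$ to the function $f(x)=x^{k}$ — which is legitimate by a routine truncation and monotone convergence argument, the moments involved being finite in view of \eqref{eq:finitemoments} — and using \eqref{relsigmatrace}, one obtains
\[
\int x^{k}\,d\mathbb{E}\sigma_{n}=\mathbb{E}\!\int x^{k}\,d\sigma_{n}=\frac{1}{n}\,\mathbb{E}(\Tr(H_{n}^{k})),
\]
and similarly, using \eqref{reltauH}, $\int x^{k}\,d\mathbb{E}\tau_{n}=\mathbb{E}(H_{n}^{k}(1,1))$. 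By the preceding theorem (see \eqref{eq:asymptrace}--\eqref{eq:asymp11entry}) the right-hand sides converge, as $n\to\infty$, to $\int x^{k}\,d\sigma$ and $\int x^{k}\,d\tau$, respectively. Hence every moment of $\mathbb{E}\sigma_{n}$ tends to the corresponding moment of $\sigma$, and likewise for $\tau$.

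Next I would run the standard argument that moment convergence to a measure determined by its moments forces weak convergence. Since $\int x^{2}\,d\mathbb{E}\sigma_{n}$ is a convergent, hence bounded, sequence, Chebyshev's inequality shows that $(\mathbb{E}\sigma_{n})_{n}$ is tight, so by Prohorov's theorem any subsequence admits a further subsequence converging weakly to some probability measure $\nu$. Given $k$, picking an even integer $2j>k$ and using that $\int x^{2j}\,d\mathbb{E}\sigma_{n}$ is bounded, the powers $\{x^{k}\}$ are uniformly integrable along that subsequence; this allows the $k$-th moment to be passed to the limit, giving $\int x^{k}\,d\nu=\lim\int x^{k}\,d\mathbb{E}\sigma_{n}=\int x^{k}\,d\sigma$ for every $k$. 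As $\sigma$ is uniquely determined by its moments, $\nu=\sigma$; since every subsequence has a sub-subsequence converging to $\sigma$, the whole sequence satisfies $\mathbb{E}\sigma_{n}\Rightarrow\sigma$, which is \eqref{weaklimsigma}. The identical reasoning with \eqref{reltauH} and the hypothesis on $\tau$ gives \eqref{weaklimtau}.

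Finally, for a continuous $f$ with $|f(x)|\le P(x)$, I would pick an even integer $2j$ with $2j>\deg P$; then, comparing $|f|^{1+\varepsilon}$ with a constant multiple of $x^{2j}$ for a small $\varepsilon>0$, the boundedness of $\int x^{2j}\,d\mathbb{E}\sigma_{n}$ makes $\{f\}$ uniformly integrable with respect to $(\mathbb{E}\sigma_{n})_{n}$. Combining this with the weak convergence just established — via the usual truncation, writing $f=f\chi_{R}+f(1-\chi_{R})$ for a continuous cutoff $\chi_{R}$ equal to $1$ on $[-R,R]$ and $0$ outside $[-R-1,R+1]$, letting $n\to\infty$ and then $R\to\infty$ — yields $\int f\,d\mathbb{E}\sigma_{n}\to\int f\,d\sigma$, and the same for $\tau$. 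The only genuinely delicate ingredient is the uniform-integrability step, invoked both to transfer moments to subsequential weak limits and to treat polynomially bounded $f$; everything else is bookkeeping, and this step may alternatively be quoted from any standard account of the Hamburger moment problem and weak convergence.
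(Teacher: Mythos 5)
Your proposal is correct and follows essentially the same route as the paper: identify the moments of $\mathbb{E}\sigma_{n}$ and $\mathbb{E}\tau_{n}$ with $\frac{1}{n}\mathbb{E}(\Tr(H_{n}^{k}))$ and $\mathbb{E}(H_{n}^{k}(1,1))$, invoke the preceding theorem for moment convergence, and conclude by the method of moments. The only difference is one of packaging: where the paper cites Lemma~\ref{aux:lemmaprob} (Billingsley) for the moment-convergence-implies-weak-convergence step and \cite[Lemma 2.1]{Khan} for the polynomially bounded test functions, you reprove these standard facts via tightness, Prohorov, and uniform integrability, and your arguments for them are sound.
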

\begin{proof}
Using elementary measure theoretic arguments, the reader can easily check that for each $n$, the measures $\mathbb{E} \sigma_{n}$ and $\mathbb{E} \tau_{n}$ have moments of all orders finite, and
\begin{align*}
\int x^{k} d \mathbb{E}\sigma_{n}(x) & =\mathbb{E} \int x^{k} d\sigma_{n}(x),\qquad k\in\mathbb{Z}_{\geq 0},\\
\int x^{k} d \mathbb{E}\tau_{n}(x) & =\mathbb{E} \int x^{k} d\tau_{n}(x),\qquad k\in\mathbb{Z}_{\geq 0}.
\end{align*}
Hence, combining \eqref{relsigmatrace}--\eqref{reltauH} with \eqref{eq:asymptrace}--\eqref{eq:asymp11entry}, we obtain
\begin{align*}
\lim_{n\rightarrow\infty} \int x^{k} d\mathbb{E}\sigma_{n}(x) & =\int x^{k} d\sigma(x), \qquad k\in\mathbb{Z}_{\geq 0},\\
\lim_{n\rightarrow\infty} \int x^{k} d\mathbb{E}\tau_{n}(x) & =\int x^{k} d\tau(x), \qquad k\in\mathbb{Z}_{\geq 0}.
\end{align*}
Thus, \eqref{weaklimsigma}--\eqref{weaklimtau} follows after applying Lemma~\ref{aux:lemmaprob}. See \cite[Lemma 2.1]{Khan} for a justification of the claim in the case of functions $f$ with polynomial growth.
\end{proof}

\section{Appendix}

Recall that for a formal Laurent series $S(z)\in\mathbb{C}((z^{-1}))$, the symbol $[S]_{n}$ refers to the coefficient of $z^{-n}$ in the expression of $S(z)$.

\begin{lemma}\label{lemma:app:1}
Let $S(z)$ be the series
\begin{equation}\label{form:S}
S(z)=\sum_{n=0}^{\infty}\frac{s_{n}}{z^{2n+1}}.
\end{equation}
Then the series $R(z)$ given by
\begin{equation}\label{eq:relRS}
R(z)=\frac{1}{z-S(z)}
\end{equation}
is also of the form
\begin{equation}\label{eq:formR}
R(z)=\sum_{n=0}^{\infty}\frac{r_{n}}{z^{2n+1}},
\end{equation}
and we have
\begin{equation}\label{rel:coeff:RS}
r_{n}=\sum_{k=0}^{n}[S^{k}]_{2n-k},\qquad n\geq 0.
\end{equation}
More generally, for any $k\geq 1$, $m\geq 0$,
\begin{equation}\label{coeffpowerR}
[R^{k}]_{2m+k}=\sum_{n=0}^{m}\binom{n+k-1}{k-1}[S^{n}]_{2m-n}.
\end{equation}
\end{lemma}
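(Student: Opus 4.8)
The plan is to reduce everything to the geometric expansion of $R$ in the field $\mathbb{C}((z^{-1}))$ followed by a bit of parity bookkeeping; this is the formal skeleton of the continued-fraction computations used elsewhere in the paper. First I would observe that, by hypothesis, $S(z)=\sum_{n\ge 0}s_n z^{-(2n+1)}$ has $\deg(S)\le -1$, so $S(z)/z$ has degree $\le -2$ and hence $\deg\big((S(z)/z)^k\big)\le -2k\to-\infty$. By the convergence criterion recalled just before Theorem~\ref{theo:analyticrel}, the sum $\sum_{k\ge 0}(S/z)^k$ is a well-defined series; multiplying by $1-S/z$ and letting the number of terms tend to infinity identifies it with $(1-S/z)^{-1}$. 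This gives
\[
R(z)=\frac{1}{z-S(z)}=\frac{1}{z}\cdot\frac{1}{1-S(z)/z}=\sum_{k=0}^{\infty}\frac{S(z)^k}{z^{k+1}}.
\]

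Next I would track exponents. A product of $k$ summands of $S$ yields a monomial $z^{-(2j+k)}$ with $j\ge 0$, so $[S^k]_\ell=0$ unless $\ell\ge k$ and $\ell\equiv k\pmod 2$. Plugging this into the displayed expansion, the term $S^k/z^{k+1}$ contributes only odd powers $z^{-(2j+2k+1)}$; summing over $k$ shows that $R$ has the form \eqref{eq:formR}, and extracting the coefficient of $z^{-(2n+1)}$ picks up a contribution from $S^k/z^{k+1}$ precisely when $0\le k\le n$, equal to $[S^k]_{2n-k}$. This is \eqref{rel:coeff:RS}.

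For the general formula \eqref{coeffpowerR} I would either raise the previous identity to the $k$-th power directly, or — more cleanly — apply the negative binomial expansion $(1-x)^{-k}=\sum_{n\ge 0}\binom{n+k-1}{k-1}x^n$ (a formal power series identity) with $x=S(z)/z$, which is legitimate since $x$ has strictly negative degree. This gives
\[
R(z)^k=\frac{1}{z^k}\cdot\frac{1}{(1-S(z)/z)^k}=\sum_{n=0}^{\infty}\binom{n+k-1}{k-1}\,\frac{S(z)^n}{z^{n+k}}.
\]
Since $S^n/z^{n+k}$ involves only powers $z^{-(2j+2n+k)}$, the coefficient of $z^{-(2m+k)}$ comes solely from $j=m-n$ with $0\le n\le m$ and equals $[S^n]_{n+2(m-n)}=[S^n]_{2m-n}$; summing over $n$ yields \eqref{coeffpowerR}, and setting $k=1$ recovers \eqref{rel:coeff:RS} as a consistency check.

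There is no real obstacle here: the only two points that need care are that the infinite sums genuinely define elements of $\mathbb{C}((z^{-1}))$, which is settled once and for all by the estimate $\deg(S)<0$, and the parity/range bookkeeping for the exponents. I would also note explicitly that Flajolet's convention \eqref{convention} is not needed in this lemma, since in \eqref{coeffpowerR} we have $k\ge 1$.
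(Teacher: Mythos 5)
Your proof is correct, but it takes a genuinely different route from the paper's. The paper works entirely at the level of coefficients: from $zR(z)-1=R(z)S(z)$ it extracts the convolution recurrence $r_{0}=1$, $r_{n}=\sum_{k=0}^{n-1}s_{k}r_{n-k-1}$, proves \eqref{rel:coeff:RS} by induction on $n$ using the convolution identity for $[S^{i+1}]$, and then proves \eqref{coeffpowerR} by a second induction on $k$, where the key step is the hockey-stick identity $\sum_{n=0}^{t}\binom{n+k-1}{k-1}=\binom{t+k}{k}$. You instead expand $R=z^{-1}(1-S/z)^{-1}$ as a geometric series and $R^{k}=z^{-k}(1-S/z)^{-k}$ via the negative binomial series, which is legitimate in $\mathbb{C}((z^{-1}))$ exactly because $\deg(S/z)\le -2$ forces $\deg\bigl((S/z)^{n}\bigr)\to-\infty$, so the infinite sums are well-defined coefficientwise (the same summability criterion the paper invokes before Theorem~\ref{theo:analyticrel}). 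Your route eliminates both inductions, makes \eqref{rel:coeff:RS} the case $k=1$ of \eqref{coeffpowerR} rather than a separate statement, and makes the parity claim \eqref{eq:formR} transparent from the exponent bookkeeping $[S^{k}]_{\ell}=0$ unless $\ell\equiv k\pmod 2$ and $\ell\ge k$; what the paper's argument buys in exchange is that it never needs to justify inverting $(1-S/z)^{k}$ or substituting into a power series, only finite coefficient identities. Both are complete; your observation that the convention \eqref{convention} plays no role here is also accurate.
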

\begin{proof}
From \eqref{eq:relRS}, or equivalently
\[
zR(z)-1=R(z)S(z),
\]
we easily deduce that $R(z)$ is of the form \eqref{eq:formR} and
\begin{equation}
\begin{aligned}
r_{0} & = 1,\\
r_{n} & =\sum_{k=0}^{n-1} s_{k}\ r_{n-k-1} ,\qquad n\geq 1.
\end{aligned}
\label{eq:rel:rs}
\end{equation}
Now we prove \eqref{rel:coeff:RS} by induction.

First, \eqref{rel:coeff:RS} for $n=0$ is simply that $r_{0}=1$. Assume that \eqref{rel:coeff:RS} holds for every $n=0,\ldots,\ell$, and let us prove that this identity is valid for $n=\ell+1$. We have
\begin{equation}\label{eq:aux:1}
r_{\ell+1} =\sum_{k=0}^{\ell} s_{k} \ r_{\ell-k}=\sum_{k=0}^{\ell}\sum_{i=0}^{\ell-k} s_{k} [S^{i}]_{2\ell-2k-i}=\sum_{i=0}^{\ell}\sum_{k=0}^{\ell-i} s_{k} [S^{i}]_{2\ell-2k-i}
\end{equation}
where in the second equality we used the induction hypothesis. On the other hand, from \eqref{form:S} we obtain immediately that the powers of $S(z)$ are of the form
\[
S^{i}(z)=\sum_{n=0}^{\infty}\frac{[S^{i}]_{2n+i}}{z^{2n+i}},\qquad i\geq 0,
\]
and therefore
\[
S^{i+1}(z)=S^{i}(z) S(z)=\sum_{n=0}^{\infty}\frac{[S^{i}]_{2n+i}}{z^{2n+i}}\ \sum_{k=0}^{\infty} \frac{s_{k}}{z^{2k+1}},
\]
which gives the relations
\begin{equation}\label{eq:aux:2}
[S^{i+1}]_{2n+i+1}=\sum_{k=0}^{n}s_{k} [S^{i}]_{2n+i-2k},\qquad i, n\geq 0.
\end{equation}
Applying \eqref{eq:aux:1} and \eqref{eq:aux:2} for $n=\ell-i$, we get
\[
r_{\ell+1}=\sum_{i=0}^{\ell}\sum_{k=0}^{\ell-i} s_{k} [S^{i}]_{2\ell-2k-i}
=\sum_{i=0}^{\ell}[S^{i+1}]_{2\ell+1-i}=\sum_{k=0}^{\ell+1}[S^{k}]_{2(\ell+1)-k}
\]
where in the last equality we used the fact that $[S^{0}]_{2\ell+2}=0$, and this concludes the proof of \eqref{rel:coeff:RS}.

Finally, we prove \eqref{coeffpowerR}, again by induction. This identity for $k=1$ is precisely \eqref{rel:coeff:RS}. Assume that \eqref{coeffpowerR} holds and let us prove that
\begin{equation}\label{tobeproved}
[R^{k+1}]_{2m+k+1}=\sum_{n=0}^{m}\binom{n+k}{k}\,[S^{n}]_{2m-n}.
\end{equation}
Making use of \eqref{eq:aux:2} (applied to $R$), \eqref{rel:coeff:RS} and \eqref{coeffpowerR}, we obtain
\begin{align*}
[R^{k+1}]_{2m+k+1} & =\sum_{j=0}^{m} r_{j}\,[R^{k}]_{2(m-j)+k}\\
& =\sum_{j=0}^{m}\sum_{\ell=0}^{j}\sum_{n=0}^{m-j}\binom{n+k-1}{k-1}\,[S^{\ell}]_{2j-\ell}\,[S^{n}]_{2(m-j)-n}\\
& =\sum_{\ell=0}^{m}\sum_{n=0}^{m-\ell}\sum_{j=\ell}^{m-n}\binom{n+k-1}{k-1}\,[S^{\ell}]_{2j-\ell}\,[S^{n}]_{2(m-j)-n},
\end{align*}
where in the last equality we transposed the summations twice. This and the relation
\[
[S^{\ell+n}]_{2m-(\ell+n)}=\sum_{j=\ell}^{m-n}[S^{\ell}]_{2j-\ell}\,[S^{n}]_{2(m-j)-n}
\]
give
\[
[R^{k+1}]_{2m+k+1}=\sum_{\ell=0}^{m}\sum_{n=0}^{m-\ell}\binom{n+k-1}{k-1}[S^{\ell+n}]_{2m-(\ell+n)}.
\]
Considering the different ways we can add $\ell$ and $n$ to obtain $t=\ell+n$, we deduce from the previous relation that
\[
[R^{k+1}]_{2m+k+1}=\sum_{t=0}^{m}\left(\sum_{n=0}^{t}\binom{n+k-1}{k-1}\right)[S^{t}]_{2m-t}
=\sum_{t=0}^{m}\binom{t+k}{k}[S^{t}]_{2m-t},
\]
which proves \eqref{tobeproved}.
\end{proof}

The following is a well-known result in the theory of moment problems. For a proof, see \cite[Theorem 30.2]{Bill}.

\begin{lemma}\label{aux:lemmaprob}
Let $(\mu_{n})_{n=1}^{\infty}$ and $\mu$ be probability measures on $\mathbb{R}$ with moments of all orders finite, and suppose that for each $k\in\mathbb{Z}_{\geq 0}$,
\[
\lim_{n\rightarrow\infty}\int x^{k} d\mu_{n}(x)=\int x^{k} d\mu(x).
\]
Assume further that the measure $\mu$ is uniquely determined by its moments. Then $\mu_{n}$ converges weakly to $\mu$, i.e., for any $f\in C_{b}(\mathbb{R})$ we have
\[
\lim_{n\rightarrow\infty} \int f d\mu_{n}=\int f d\mu.
\]
\end{lemma}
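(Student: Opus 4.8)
The final statement is the classical ``method of moments'' convergence theorem, and the plan is to prove it by a tightness-plus-subsequence argument, with the one genuinely substantive point being that weak convergence (which only sees bounded continuous test functions) must be upgraded to convergence of the unbounded power functions $x^k$.

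First I would establish tightness of the family $(\mu_n)_{n\ge 1}$. Since $\int x^2\,d\mu_n(x)$ converges, it is bounded, say $\sup_n\int x^2\,d\mu_n(x)=M<\infty$; by Chebyshev's inequality $\mu_n(\{|x|>R\})\le M/R^2$ for every $n$ and every $R>0$, so the family is uniformly tight. By Prokhorov's theorem (equivalently Helly's selection principle together with tightness to rule out escape of mass), it then suffices to show that \emph{every} subsequence of $(\mu_n)$ admits a further subsequence $(\mu_{n_j})$ converging weakly to $\mu$; from this one concludes $\mu_n\Rightarrow\mu$, hence $\int f\,d\mu_n\to\int f\,d\mu$ for all $f\in C_b(\mathbb R)$. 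So fix a subsequence and extract $(\mu_{n_j})$ with $\mu_{n_j}\Rightarrow\nu$ for some probability measure $\nu$.

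The key step is to show that $\nu$ has the same moments as $\mu$. Fix $k\ge 0$. Because $\int x^{2k+2}\,d\mu_n(x)$ converges it is bounded by some $C_k<\infty$, and by truncation together with the portmanteau theorem the limit $\nu$ satisfies $\int|x|^{2k+2}\,d\nu\le C_k$ as well. Using $|x|^k\le|x|^{2k+2}R^{-(k+2)}$ on $\{|x|>R\}$, the tails $\int_{|x|>R}|x|^k\,d\mu_{n_j}$ and $\int_{|x|>R}|x|^k\,d\nu$ are both at most $C_kR^{-(k+2)}$, uniformly in $j$; this is precisely the uniform integrability of the family $(x^k)$. Truncating $x^k$ to a function $\phi_R\in C_b(\mathbb R)$ that agrees with it on $[-R,R]$, applying $\mu_{n_j}\Rightarrow\nu$ to $\phi_R$, and then letting $R\to\infty$ gives $\int x^k\,d\nu=\lim_j\int x^k\,d\mu_{n_j}=\int x^k\,d\mu$, the last equality being the hypothesis. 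Since $\mu$ is uniquely determined by its moments, $\nu=\mu$; as this holds for every subsequential weak limit, $\mu_n\Rightarrow\mu$. The refinement to continuous $f$ with $|f(x)|\le P(x)$ for some polynomial $P$ (needed for the corollary on weak convergence of $\mathbb E\sigma_n$ and $\mathbb E\tau_n$) follows by the same scheme: split the integral at $|x|=R$, bound the inner part by weak convergence applied to a bounded continuous truncation of $f$, and bound the outer part by $\int_{|x|>R}P\,d\mu_{n_j}$, which is controlled by the bounded moment of an even order exceeding $\deg P$.

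The main obstacle is exactly the passage carried out in the third paragraph: on its face weak convergence says nothing about $\int x^k\,d\mu_{n_j}$, since $x^k$ is unbounded. What rescues the argument is that the hypothesis furnishes uniform bounds on moments of \emph{all} orders along the sequence, in particular on one even order above $k$, and this is precisely the uniform integrability that legitimizes interchanging limit and integral.
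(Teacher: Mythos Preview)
Your argument is correct and is the standard proof of the method-of-moments convergence theorem. The paper, however, does not supply its own proof of this lemma: it simply states the result as well known and refers the reader to \cite[Theorem 30.2]{Bill}. So there is nothing to compare against beyond noting that your tightness-plus-subsequence-plus-uniform-integrability scheme is essentially the textbook argument one finds in Billingsley.

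One small remark: the last paragraph of your proposal, about continuous $f$ dominated by a polynomial, goes beyond the lemma as stated. That extension is used in the paper's Corollary~5.3, but there the authors again do not prove it themselves; they cite \cite[Lemma 2.1]{Khan}. Your sketch of that extension is also correct, but strictly speaking it is not part of what Lemma~\ref{aux:lemmaprob} asserts.
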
 

\bigskip

\noindent\textbf{Acknowledgments:} We thank Jeff Geronimo and Mourad Ismail for bringing to our attention some pertinent works on random polynomials and Jacobi matrices. The first author acknowledges partial support from the grant MTM2015-65888-C4-2-P of the Spanish Ministry of Economy and Competitiveness.

\end{document}